\newtheorem{theorem}{Theorem}[section]
\newtheorem{prop}[theorem]{Proposition}
\newtheorem{corollary}[theorem]{Corollary}
\newtheorem{lemma}[theorem]{Lemma} 
\theoremstyle{definition}
\newtheorem{definition}[theorem]{Definition} 
\newtheorem{exmp}{Example}[section]
\newtheorem{conjecture}{Question}
\theoremstyle{remark}
\newtheorem{remark}[theorem]{Remark} 
\newcommand{\C}{\mathbb{C}}
\newcommand{\R}{\mathbb{R}}
\newcommand{\N}{\mathbb{N}}
\newcommand{\T}{\mathbb{T}}
\title[Extensions of Daubechies' theorem]{Extensions of Daubechies' theorem: Reinhardt domains, Hagedorn wavepackets and mixed-state localization operators}
\author{Erling Svela}
\address{Department of Mathematical Sciences, Norwegian University of Science and Technology, 7491 Trondheim, Norway}
\email{erling.a.t.svela@ntnu.no}
\date{}
\begin{document}
\begin{abstract}
    Daubechies-type theorems for localization operators are established in the multi-variate setting, where Hagedorn wavepackets are identified as the proper substitute of the Hermite functions. 
    The class of Reinhardt domains is shown to be the natural class of masks that allow for a Daubechies-type result. Daubechies' classical theorem is a consequence of double orthogonality results for the short-time Fourier transform. We extend double orthogonality to the quantum setting and use it to establish Daubechies-type theorems for mixed-state localization operators, a key notion of quantum harmonic analysis. Lastly, we connect the results to Toeplitz operators on quantum Gabor spaces.
   
\end{abstract}
\subjclass[2020]{42C49,47G30,47B35,42C05,32Q02} 
\keywords{Localization operators, quantum harmonic analysis, Hermite functions, Hagedorn wavepackets, Reinhardt domains}

\maketitle

\section{Introduction}
Let \(f,g\in L^2(\R^d)\) and \(F:\R^{2d}\rightarrow \C\). We define the (generalized) localization operator with mask \(F\) and window \(g\) by \begin{align}\label{LocOpDef}
    A_F^gf=\int_{\R^{2d}} F(z)\langle f,\pi(z)g\rangle \pi(z)g\;dz,
\end{align}
where \(\pi(z)g(t)=\pi(x,\omega)g(t)=e^{2\pi i \langle t,\omega \rangle}g(t-x)\) and we interpret the integral weakly. First introduced by Daubechies \cite{Daubechies}, localization operators have become key items in time-frequency analysis, and many aspects of their structure have been studied in the literature \cite{FG,Boundedness1,Szego,RamTop,Berezin}. A fundamental result in the theory of localization operators is Daubechies' theorem \cite[Section 4]{Daubechies}. In the case of a polyradial mask \(F\) and the Gaussian window function, \(\phi_0(t)=2^{d/4}e^{-\pi|t|^2}\), Daubechies' theorem states that the eigenfunctions of the localization operator with mask \(F\) and window \(\phi_0\), \(A_F^{\phi_0}\), are the Hermite functions \(\{\phi_n\}_{n\in\N_0^d}\). Moreover, the theorem also provides an explicit formula for the eigenvalues. In the one-dimensional case, the eigenvalue of the \(n\)-th Hermite function is \begin{align*}
    \lambda_n=\int_0^\infty F\left(\sqrt{\frac{u}{\pi}}\right)\frac{r^n}{n!}e^{-r}\;dr.
\end{align*}
 Knowing the complete spectral data of these localization operators is of great use, and many authors have exploited this knowledge to establish a variety of results. Some examples include quantitative uncertainty principles \cite{Helge1,Helge2,Helge3} and inverse problems \cite{AbreuDorfler}.
 In contrast, for general localization operators only asymptotic formulas for the eigenvalues are known \cite{Szego,RamTop}, and the properties of the eigenfunctions depend heavily on the properties of the localization operator \cite{RamTop}. As such, the utility of Daubechies' theorem makes studying localization operators with Gaussian window considerably easier than for general windows.\newline

Localization operators are closely connected to the short-time Fourier transform (STFT). Defined for \(f,g\in L^2(\R^d)\) by \(V_gf(z)=\langle f,\pi(z)g\rangle\), the short-time Fourier transform represents the function \(f\) jointly in time and frequency, unlike the Fourier transform, which separates time and frequency. A natural question is then to ask how much of the representation \(V_gf\) is concentrated on a certain domain in the time-frequency plane. That is, given a domain \(\Omega\subset \R^{2d}\), how large can \(\int_{\Omega} |V_gf(z)|^2\;dz\) be compared to the norm of \(f\)? This can be thought of as a quantitative uncertainty principle for the STFT. More generally, given a function \(F:\R^{2d}\rightarrow \R_+\) and a normalized \(g\in L^2(\R^d)\), determine \begin{align}\label{ConcProblem}
    \sup_{ \|f\|=1} \int_{\R^{2d}}|V_gf(z)|^2 F(z)\;dz.
\end{align}
Ideally, one would also like to know which functions maximize this quantity. Via the min-max principle, the maxima are the eigenvalues of the localization operator \(A_F^g\), and the maximizers are the eigenfunctions. As such, Daubechies' theorem also yields the solution to the spectrogram localization problem as long as the window is a Gaussian and the weight is polyradial. While much progress has been made on the general localization problem \cite{LargeSieve,AccSpec}, the sharpest recent results have all assumed the window to be Gaussian \cite{FaberKrahn,isoperimetric,LocOpNorm}, hinting at Daubechies' theorem as a valuable tool in the study of the localization problem as well. \newline

In this work we look for extensions of Daubechies' theorem. In the one-dimensional case some progress has already been made. It was shown in \cite{HermiteDaub} that if \(F\) is radial and \(g=\phi_n\) is a Hermite function, then the eigenfunctions of \(A_F^{\phi_n}\) are also the Hermite functions. Approaching the eigenvalue problem using the concept of double orthogonality, we generalize the result in \cite{HermiteDaub} to the multivariate case, and to a larger class of polyradial masks. The generalization relies on the characterization of compact localization operators due to Fernández and Galbis \cite{FG}. In the special case where \(F=\chi_{\Omega}\) is a characteristic function, we show that for Daubechies' theorem to hold, \(\Omega\) must be a Reinhardt domain, a well-known class of domains from several complex variables~\cite{SCVBook}. Thus, for Daubechies' theorem it is Reinhardt domains, not balls, which are the natural multivariate analogues of spherically symmetric sets. Moreover, we also prove a similar result for a class of generalized Hermite functions, namely the Hagedorn wavepackets \cite{Hagedorn1}. The eigenvalue problem for localization operators is known to be covariant under symplectic transforms \cite{FaberKrahn,deGossonBook}, and the Hagedorn wavepackets serve as an explicit description of the symplectic covariance in the case of a polyradial mask and a Hermite analysis window.\newline

Localization operators also play an important role in quantum harmonic analysis (QHA), a field of study inspired by Werner's work~\cite{Werner}. In QHA, localization operators are the simplest case of function-operator convolution, namely the convolution between a function \(F\) and a rank-one operator (or pure state) \(g\otimes g\). More concretely, we have\begin{align*}
    A_F^g=F\star (g\otimes g),
\end{align*} where \(\star\) denotes function-operator convolution. A natural generalization of the eigenvalue problem for localization operators is thus to investigate the eigenvalue problem for general function-operator convolutions, which are called mixed-state localization operators \cite{bible2} and denoted by \(F\star S\) for some operator \(S\). While the localization problem for mixed-state localization operators was first studied in \cite{bible2}, several special cases of this problem have seen thorough study. In addition to the classical localization problem for the STFT mentioned above, the localization problem for the Wigner distribution: \begin{align}\label{WignerProblem}
        \sup_{ \|f\|=1} \int_{\R^{2d}} W(f)(z) F(z)\;dz,
\end{align} (see Section~\ref{ssec:WignerWeyl} for the definition of the Wigner distribution) has an equally long history, see for instance~\cite{flandrin,LiebOstrover,Lerner,RamTop2}. In QHA, this problem corresponds to the mixed-state localization operator \(F\star 2^dP\). We contribute to the general theory by introducing a quantum version of double orthogonality, which characterizes the eigenfunctions of a mixed-state localization operator, just like in the pure-state case. While there is no full characterization of compact mixed-state localization operators, we make use of two sufficient conditions for compactness: Fernández-Galbis' criterion in the trace class case, and the Tauberian theorem for operators in the non-trace class case \cite{FG,bible3}. Furthermore, we establish a Daubechies-type theorem for certain mixed-state localization operators, namely the ones where the function is polyradial, and the operator has polyradial Fourier-Wigner transform. The condition on the operator is the natural extension of the pure state condition of the analysis window being a Gaussian or a Hermite function, and also captures previous work on the Weyl transform~\cite{LiebOstrover,RamTop2}. 
We believe our extension of Daubechies' theorem to be of relevance outside of quantum harmonic analysis, and this is illustrated by applications to problems in time-frequency analysis. We first solve a variant of a problem of Lerner about the spectrum of the Weyl transform of rotationally invariant domains~\cite{Lerner}, which is closely related to the Wigner localization problem~\eqref{WignerProblem}. We then provide an explicit solution for the localization problem for Gaussian Cohen's classes, providing new examples of quadratic time-frequency distributions whose localization problems are solved by the Hermite functions. Lastly, we connect our results on mixed-state localization operators to Toeplitz operators in quantum Gabor spaces, recently introduced in \cite{OpSTFT}.\newline

The paper is organized as follows: We collect the required preliminaries in Section~\ref{Sec:Prelims}. Section 3 gives a thorough introduction to Hagedorn wavepackets, and their representations in phase space. In Section 4 we prove several extensions of Daubechies' theorem for classical localization operators, with a particular focus on the multidimensional case. Section 5 is about Daubechies-type theorems in the general setting of mixed-state localization operators. We prove a general result which not only covers the classical Daubechies-type theorems considered in Section 4, but also covers celebrated results for the Wigner localization problem. Lastly, in Section 6 we connect our results on mixed-state localization operators to Toeplitz operators in quantum Gabor spaces.

\section{Preliminaries}\label{Sec:Prelims}
\subsection{Notation}
For a multi-index \(k\in\N_0^d\), we denote by \(|k|\) its length, \(\sum_{j=1}^d k_j\), and by \(k!\) the product \(\prod_{j=1}^d k_j!\). Lowercase letters denote functions from $\R^d$ to $\C$. The uppercase letters \(F,G,H,\dots\) will denote functions from $\R^{2d}$ to $\C$, while the uppercase letters \(R,S,T,\dots\) denote operators on $L^2(\R^d).$ Uppercase Greek letters denote functions in Bochner spaces. The Schwartz class is denoted by $\mathscr{S}(\R^d)$. We denote by $e_j$ the multi-index with a $1$ in the $j$-th spot and zeroes elsewhere. For a multi-index \(\alpha\in\N_0^d\), \(t^\alpha=t_1^{\alpha_1}t_2^{\alpha_2}\dots t_d^{\alpha_d}\). The parity operator is defined by \(Pf(t)=f(-t)\), and we will often write \(Pf\) as \(\check{f}\). 
We will frequently identify the time-frequency plane $\R^{2d}$ with the complex plane $\C^d$ through the identification \[(x_1,x_2,\dots,x_d,\omega_1,\omega_2,\dots,\omega_d)\mapsto (x_1+i\omega_1,x_2+i\omega_2,\dots, x_d+i\omega_d).\] Unless otherwise stated, \(\|\cdot\|\) always denotes the \(L^2\) norm. Inner products are always assumed to be antilinear in the second coordinate. \(f\otimes g\) denotes the rank-one operator, defined by \(\left(f\otimes g\right) (h)=\langle h,g\rangle f\). The \(2d\times 2d\)-matrix \(\begin{pmatrix}
    0 & \mathrm{Id}_d \\ -\mathrm{Id}_d & 0
\end{pmatrix}\) is denoted by $J$. A real \(2d\times 2d\)-matrix is called symplectic if \(A^TJA=J\). Lastly, \(\mathcal{F}_\sigma\) denotes the symplectic Fourier transform, defined by\begin{align*}
    \mathcal{F}_{\sigma}f(z)=\int_{\R^{2d}}f(z')e^{-2\pi i \langle Jz,z'\rangle}\;dz'.
\end{align*}

\subsection{The short-time Fourier transform}

Several concepts related to the short-time Fourier transform (STFT) play an important role in this work. In this section we present the most important properties of the STFT here, see \cite{Grochenig} for details. Given \(f,g \in L^2(\R^d)\) and \(g\neq 0\). We define the short-time Fourier transform (STFT) of \(f\) with window \(g\) as \begin{align}\label{STFTDef}
V_gf(x,\omega)=\int_{\R^d}f(t)\overline{g(t-x)}e^{-2\pi i \langle \omega, t\rangle}\;dt.
\end{align} 
If we define the time-frequency shift of \(g\) as \[\pi(z)g(t)=\pi(x,\omega)g(t)=g(t-x)e^{2\pi i \langle\omega, t\rangle},\] we may vrite $V_gf(z)=\langle f,\pi(z)g\rangle.$ A fundamental property of the STFT is Moyal's identity: \begin{prop}[{\cite[Theorem 3.2.1]{Grochenig}}]
Let $f_1,f_2,g_1,g_2\in L^2(\R^d).$ We have \begin{align}\label{Moyal}\int_{\mathbb{R}^{2d}}V_{g_1}f_1(z)\overline{V_{g_2}f_2(z)}\;dz=\langle f_1,f_2\rangle\overline{\langle g_1,g_2\rangle}.\end{align}\end{prop}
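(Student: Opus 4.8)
The plan is to exploit the fact that, for each fixed \(x\), the function \(\omega\mapsto V_gf(x,\omega)\) is itself a Fourier transform, thereby reducing Moyal's identity to the Plancherel theorem together with Fubini's theorem. Writing out the definition, \(V_gf(x,\omega)=\int_{\R^d}f(t)\overline{g(t-x)}e^{-2\pi i\langle\omega,t\rangle}\,dt\) is precisely the Fourier transform, evaluated at \(\omega\), of the function \(t\mapsto f(t)\overline{g(t-x)}\). The first step is therefore to set \(F^{(1)}_x(t)=f_1(t)\overline{g_1(t-x)}\) and \(F^{(2)}_x(t)=f_2(t)\overline{g_2(t-x)}\), so that \(V_{g_j}f_j(x,\cdot)=\widehat{F^{(j)}_x}\) for \(j=1,2\).

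Next I would carry out the \(\omega\)-integration first. For fixed \(x\), Plancherel's theorem in the \(\omega\) variable gives \(\int_{\R^d}V_{g_1}f_1(x,\omega)\overline{V_{g_2}f_2(x,\omega)}\,d\omega=\int_{\R^d}F^{(1)}_x(t)\overline{F^{(2)}_x(t)}\,dt=\int_{\R^d}f_1(t)\overline{f_2(t)}\,\overline{g_1(t-x)}g_2(t-x)\,dt\). Integrating this over \(x\in\R^d\) and invoking Fubini to exchange the \(t\)- and \(x\)-integrations, the factor \(f_1(t)\overline{f_2(t)}\) pulls out of the \(x\)-integral, leaving \(\int_{\R^{2d}}V_{g_1}f_1(z)\overline{V_{g_2}f_2(z)}\,dz=\int_{\R^d}f_1(t)\overline{f_2(t)}\bigl(\int_{\R^d}\overline{g_1(t-x)}g_2(t-x)\,dx\bigr)\,dt\). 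The substitution \(u=t-x\) renders the inner integral independent of \(t\), equal to \(\int_{\R^d}\overline{g_1(u)}g_2(u)\,du=\overline{\langle g_1,g_2\rangle}\), and the remaining \(t\)-integral is \(\langle f_1,f_2\rangle\); multiplying the two yields the claimed formula.

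The main obstacle is purely one of justification: for general \(f_j,g_j\in L^2(\R^d)\) the products \(F^{(j)}_x\) lie in \(L^1\cap L^2\) only for a.e.\ \(x\), and the double integral need not converge absolutely, so neither the Fubini step nor the fibrewise use of Plancherel is immediately licensed. I would handle this by first establishing the identity for \(f_j,g_j\) in the Schwartz class \(\mathcal{S}(\R^d)\), where \(V_{g_j}f_j\) decays rapidly, all integrals converge absolutely, and every interchange above is valid. Both sides are bounded sesquilinear forms in the pairs \((f_1,f_2)\) and \((g_1,g_2)\) --- the right-hand side obviously, and the left-hand side by Cauchy--Schwarz once one knows \(V_gf\in L^2(\R^{2d})\) with \(\|V_gf\|_2=\|f\|_2\|g\|_2\) --- so the identity extends from the dense subspace \(\mathcal{S}(\R^d)\) to all of \(L^2(\R^d)\) by continuity. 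Since the required \(L^2\)-boundedness of the STFT is itself the diagonal case \(f_1=f_2,\ g_1=g_2\) of the identity on Schwartz functions, a bounded-extension-by-density argument (isometry on a dense subspace, extended to its closure) makes the whole passage to \(L^2\) self-contained.
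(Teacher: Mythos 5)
Your proof is correct; the paper itself states Moyal's identity without proof, simply citing Gr\"ochenig's book, and your argument (viewing $V_gf(x,\cdot)$ as a Fourier transform, applying Plancherel fibrewise, then Fubini and the substitution $u=t-x$, with a density argument to license the formal steps) is essentially the standard proof given in that reference. The only cosmetic difference is that the textbook version handles the justification by temporarily assuming the windows bounded rather than restricting all four functions to the Schwartz class, but your Schwartz-plus-bounded-extension route is equally valid, granting the routine observation that the $L^2$-extension of $f\mapsto V_gf$ agrees pointwise a.e.\ with $\langle f,\pi(z)g\rangle$.
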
 As a direct consequence of Moyal's identity, we may reconstruct $f$ from $V_g f$ for any $g\neq 0$:
\begin{align*}
    f=\frac{1}{\langle g,g \rangle}\int_{\R^{2d}} V_gf(z)\pi(z)g\; dz,
\end{align*} where we interpret the integral weakly. Another consequence of Moyal's identity is that if $\|g\|=1$, then the mapping $f\mapsto V_gf$ is an isometry from $L^2(\R^d)$ to $L^2(\R^{2d}).$ Moreover, the adjoint mapping is \[V_g^* F=\int_{\R^{2d}}F(z)\pi(z) g\;dz,\] where we again interpret the integral weakly. The reconstruction formula may now equivalently be written as \begin{align*}f=V_g^*V_g f.\end{align*} The reconstruction formulas for the STFT motivate the definition of the localization operators. For a subset $\Omega\subset\R^{2d}$, and a normalized $g\in L^2(\R^d)$ we define the localization operator $A_{\Omega}^g$ by the weak integral \[A_{\Omega}^g f=\int_{\Omega} V_gf(z)\pi(z)g\; dz,\] that is, we restrict the integral in the reconstruction formula to the domain $\Omega$. In terms of operators, we may write $$A_{\Omega}^g=V_g^*m_{\Omega} V_g,$$ where \(m_{\Omega}F(z)=\chi_{\Omega}(z)\cdot F(z)\).
Localization operators are closely related to the following extremal problem: Given a domain $\Omega\subset \R^d$ and a normalized $g\in L^2(\R^d)$, find \[\sup_{\|f\|=1} \int_{\Omega} |V_gf(z)|^2\;dz.\] We call this extremal problem \textit{the spectrogram localization problem over $\Omega$.} By the Courant-Fischer theorem \cite{Lax}, the solution is given in terms of the eigenvalues of the localization operator $A_{\Omega}^g$. Namely, if $\psi_n$ is the $n$-th eigenfunction, ordered decreasingly by the corresponding eigenvalue, then 
\[\int_{\Omega} |V_g\psi_n(z)|^2\;dz=\max \bigg\{\int_{\Omega} |V_gf(z)|^2\;dz\colon \|f\|=1, f\perp\psi_1,\psi_2,\dots\psi_{n-1}\bigg\}.\] 
We also have \begin{align*}
    \int_{\Omega} |V_g\psi_n(z)|^2\;dz=\int_{\Omega} V_g\psi_n(z)\langle\pi(z) g,\psi_n\rangle\;dz=\langle A_{\Omega}^g\psi_n,\psi_n\rangle=\lambda_n\langle\psi_n,\psi_n\rangle=\lambda_n.
\end{align*} One can also study weighted extremal problems, that is, finding a function $f$ that maximizes the weighted integral~\eqref{ConcProblem} where $F$ is a ``nice" weight, a notion we will make more concrete later. Of course, letting \(F(z)=\chi_{\Omega}(z)\) recovers the spectrogram localization problem over $\Omega$. This motivates the study of the generalized localization operator \[A_F^g f=\int_{\R^{2d}} F(z)\cdot V_gf(z)\pi(z)g\;dz.\]
While the natural space to work with the STFT on is \(L^2(\R^d)\), we will sometimes make use of two other spaces, namely \textit{Feichtinger's algebra}~\cite{Fei}:\begin{align*}
    S_0(\R^d)=\big\{f\in\mathscr{S}'(\R^d)\colon V_{\phi_0}f\in L^1(\R^{2d})\big\},
\end{align*}
where \(\phi_0(t)=2^{d/4}e^{-\pi |t|^2}\) is the standard Gaussian, as well as its dual space \(S_0'(\R^d)\) of \textit{mild distributions.}

\subsection{Weyl calculus}\label{ssec:WignerWeyl}

The Weyl calculus will appear in the later parts of this work, both as a motivating example and a mathematical tool. The starting point is the Wigner distribution, which for two functions \(f,g\in L^2(\R^d)\) is defined by \begin{align}\label{WignerDef}
    W(f,g)(x,\omega)=\int_{\R^d}f(x+\frac{t}{2})\overline{f(x-\frac{t}{2})}e^{-2\pi i \langle\omega, t\rangle}\;dt.
\end{align}
In the case where \(f=g\) we write \(W(f)=W(f,g)\). The Wigner distribution is in essence an STFT, as can be seen from the formula~\cite[Lemma 4.3.1]{Grochenig}\begin{align}\label{WignerIsSTFT}
    W(f,g)(x,\omega)=2^de^{4\pi i \langle x,\omega\rangle}V_{\check{g}}f(2x,2\omega),
\end{align}
and consequently it inherits several properties from the STFT. Nevertheless, it is helpful to introduce the Wigner distribution due to its relation to two other concepts. The first one is the \textit{Weyl transform}. Given a tempered distribution \(F\) on \(\R^{2d}\) one can define an operator \(L_F\), called the Weyl transform of \(F\), via the pairing \begin{align}\label{WeylDef}
    \langle L_Ff,g\rangle=\langle F,W(g,f)\rangle,
\end{align}
where \(f\) and \(g\) are Schwartz functions on \(\R^{d}\), the left pairing is over \(\R^d\) and the right pairing is over \(\R^{2d}\). Given an operator \(T:\mathscr{S}(\R^d)\to\mathscr{S}'(\R^d)\), if there is a distribution \(F\) such that the relation \begin{align*}
    \langle Tf, g \rangle=\langle F,W(g,f)\rangle
\end{align*}
holds for all Schwartz functions \(f,g\), we call \(F\) the \textit{Weyl symbol} of \(T\), and write \(F=a_T\).

The second important concept related to the Wigner distribution is \textit{Cohen's class}. Time-frequency distributions of Cohen's class are of the form \begin{align}\label{CohenDef}
    Q(f)=Q_{\phi}(f)=W(f)*\phi, \quad \phi\in \mathscr{S}'(\R^{2d}).
\end{align}
We will also use the cross-Cohen's class, which is given by \begin{align}\label{CrossCohenDef}
    Q_{\phi}(f,g)=W(f,g)*\phi.
\end{align}
Cohen's class contains several important time-frequency distributions, most notably the Wigner distribution itself (\(\phi=\delta\)) and the spectrograms \(|V_gf(z)|^2\) (\(\phi=W(g)\)). In fact, Cohen's class contains all covariant and separately weak*-continuous quadratic time-frequency distributions, see~\cite[Theorem 4.5.1]{Grochenig} or~\cite{GSMetaplectic}.
\subsection{Special functions}
We will make frequent use of three classes of special functions, which we will introduce below.\newline

The Hermite functions, defined by \begin{align}\label{HermiteDef}
    \phi_n(t)=\frac{2^{1/4}}{\sqrt{n!}}\left(\frac{-1}{2\sqrt{\pi}}\right)^ne^{\pi t^2}\frac{d^n}{dt^n}\left(e^{-2\pi t^2}\right),
\end{align} 
play an important role in time-frequency analysis and many related areas of mathematics. They are eigenfunctions of the harmonic oscillator, they form an orthonormal basis of \(L^2(\R)\), and they are known to satisfy a three-term recurrence relation. By the three-term recurrence, as well as the properties of the Gaussian function \(\phi_0(t)=2^{1/4}e^{-\pi t^2}\), it is clear that we also have the formula \[\phi_n(t)=h_n(t)\phi_0(t),\] where \(h_n\) is a polynomial of degree \(n\), called the \(n\)-th Hermite polynomial.

In higher dimensions we will also consider the product Hermite functions, defined by \[\phi_{k}(t_1,t_2,\dots,t_d)=\prod_{j=1}^d \phi_{k_j}(t_j),\quad k\in\N_0^d.\] Most properties of the Hermite functions carry over to higher dimensions.

Another class of special functions we need is the generalized Laguerre polynomials. For \(k\in\N_0\), \(\alpha,t>0\) we define \begin{align*}
    L_k^\alpha(t)=\sum_{j=0}^k(-1)^j\binom{k+\alpha}{k-j}\frac{t^j}{j!}.
\end{align*}
The Laguerre polynomials satisfy the recurrence relation \[L_{k+1}^\alpha(t)=\frac{2k+1+\alpha-t}{k+1}L_{k}^\alpha(t)-\frac{k+\alpha}{k+1}L_{k-1}^\alpha(t),\] where \(L_0^\alpha(t)=1\) and \(L_1^\alpha(t)=1+\alpha-t\). For any fixed \(\alpha\), the Laguerre polynomials are orthonormal on the weighted \(L^2\)-space \(L^2(\R_+,t^\alpha e^{-t} \; dt)\).

The last set of special functions we need is the complex Hermite polynomials, a class of polynomials over \(\C\) defined via the Laguerre polynomials. Given \(z\in\C, n,k\in\N_0\), we define \begin{align*}H_{n,k}(z)=\begin{cases}
    \sqrt{\frac{k!}{n!}}\pi^{\frac{n-k}{2}}z^{n-k}L_k^{n-k}(\pi|z|^2),\quad 0\leq k<n \\ (-1)^{k-n}\sqrt{\frac{n!}{k!}}\pi^{\frac{k-n}{2}}\overline{z}^{k-n}L_n^{k-n}(\pi|z|^2),\quad 0\leq n\leq k.
\end{cases}\end{align*}
Since the complex Hermite polynomials have different definitions depending on the relation between \(n\) and \(k\) we would in general have to consider several different cases in our proofs. However, since the Laguerre polynomials satisfy the reflection identity \begin{align}\label{Reflection}
    \frac{(-t)^n}{n!}L_{j}^{n-j}(t)=\frac{(-t)^j}{j!}L_{n}^{j-n}(t),
\end{align} we will usually only consider the first case, even if \( n-j<0\), and interpret it in terms of the reflection identity.\newline 

It is well-known that the complex Hermite polynomials are orthogonal with respect to the Gaussian measure, that is $$\int_{\mathbb{C}}H_{n,k}(z)\overline{H_{m,j}(z)}e^{-\pi |z|^2}\;dz=\delta_{n,m}\delta_{j,k}.$$ In \cite{LargeSieve}, it was shown that the complex Hermite polynomials are also locally orthogonal. We will make use of this result in the special case $k=j$. The statement below is slightly different from the one made \cite{LargeSieve}, as we have corrected the expression for the constant.
\begin{lemma}[{\cite[Prop. 7]{LargeSieve}}]\label{AbreuSpeckbacher}
    Let \(n,m,k\in\N_0\) and let $D_R$ be the disc of radius $R$ centered at $0$.  We have 
    \begin{align}\label{HermiteDblOrth}
        \int_{D_R} H_{n,k}(z)\overline{H_{m,k}(z)}e^{-\pi |z|^2}\; dz=c_{n,k}(R)\delta_{n,m},\end{align} with \begin{align*}
            c_{n,k}(R)=\begin{cases}
        \frac{k!}{n!}\int_0^{\pi R^2}t^{n-k}\left(L_k^{n-k}(t)\right)^2e^{-t}\; dt\quad 0\leq k< n, \\
        \frac{n!}{k!}\int_0^{\pi R^2}t^{k-n}\left(L_n^{k-n}(t)\right)^2e^{-t}\; dt\quad 0\leq n\leq k.
    \end{cases}
    \end{align*}
\end{lemma}
\begin{remark}
    Observe that the case $k=0$ gives the eigenvalues from Daubechies' classical theorem.
\end{remark}

With regards to time-frequency analysis, the importance of the complex Hermite polynomials stems from the Laguerre connection \cite[Section 1.9]{Folland}.

\begin{lemma}[Laguerre connection]
    The following formula for the STFT of a Hermite function with respect to another Hermite function holds.
    \begin{align}\label{LaguerreConn} V_{\phi_k}\phi_n(x,\omega)=e^{-i\pi x\omega}e^{-\frac{\pi}{2}(x^2+\omega^2)}\overline{H_{n,k}(z)}.\end{align}
\end{lemma}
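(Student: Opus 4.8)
The plan is to work in the one-dimensional case stated here and to compute both sides through their exponential generating functions in the indices $n$ and $k$. First I would record the generating function of the Hermite functions. Starting from the Rodrigues-type formula defining $\phi_n$ together with the Taylor expansion $\sum_n \frac{u^n}{n!}\frac{d^n}{dt^n}(e^{-2\pi t^2}) = e^{-2\pi(t+u)^2}$ evaluated at $u = -s/(2\sqrt{\pi})$, a short computation yields
\[\Phi(t,s):=\sum_{n=0}^\infty\frac{s^n}{\sqrt{n!}}\phi_n(t)=2^{1/4}\exp\left(-\pi t^2+2\sqrt{\pi}\,st-\tfrac{s^2}{2}\right).\]
Checking the $n=0,1$ coefficients against the definition of $\phi_n$ serves as a quick sanity test.

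Next I would assemble the double generating function $\mathcal{G}(x,\omega;s,w):=\sum_{n,k}\frac{s^n\overline{w}^{\,k}}{\sqrt{n!\,k!}}V_{\phi_k}\phi_n(x,\omega)$. Since the $\phi_k$ are real, the window sum produces $\Phi(t-x,\overline{w})$, and using the Gaussian decay of $\Phi$ to justify interchanging the absolutely convergent double series with the integral defining the STFT, I obtain
\[\mathcal{G}(x,\omega;s,w)=\int_{\R}\Phi(t,s)\,\Phi(t-x,\overline{w})\,e^{-2\pi i\omega t}\,dt.\]
This is a single Gaussian integral in $t$; evaluating it via $\int_{\R} e^{-2\pi t^2+bt}\,dt=\tfrac{1}{\sqrt{2}}e^{b^2/8\pi}$ and collecting the exponent in the complex coordinate $z=x+i\omega$ (so $\overline{z}=x-i\omega$), the $s^2$ and $\overline{w}^2$ terms cancel and the purely $(x,\omega)$-dependent part collapses to exactly $-i\pi x\omega-\tfrac{\pi}{2}(x^2+\omega^2)$. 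The outcome is
\[\mathcal{G}(x,\omega;s,w)=e^{-i\pi x\omega}e^{-\frac{\pi}{2}(x^2+\omega^2)}\exp\!\left(\sqrt{\pi}\,\overline{z}\,s+s\,\overline{w}-\sqrt{\pi}\,z\,\overline{w}\right).\]

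It then remains to identify the residual exponential with the generating function of $\overline{H_{n,k}}$. Writing it as $\exp(s(\sqrt{\pi}\overline{z}+\overline{w}))\exp(-\sqrt{\pi}z\,\overline{w})$, expanding both factors and extracting the coefficient of $s^n\overline{w}^{\,k}$ gives a finite sum which, after the substitution $l=k-j$ and the identity $\overline{z}^{\,n-k+j}z^{\,j}=\overline{z}^{\,n-k}|z|^{2j}$, equals
\[\pi^{(n-k)/2}\overline{z}^{\,n-k}\sum_{j=0}^{k}\frac{(-1)^j}{(k-j)!\,(n-k+j)!\,j!}\,\pi^{j}|z|^{2j}.\]
Comparing this with the Laguerre expansion of $\overline{H_{n,k}(z)}=\sqrt{k!/n!}\,\pi^{(n-k)/2}\overline{z}^{\,n-k}L_k^{n-k}(\pi|z|^2)$, using $L_k^{\alpha}(t)=\sum_j(-1)^j\binom{k+\alpha}{k-j}t^j/j!$ with $\alpha=n-k$, shows the coefficient is precisely $\overline{H_{n,k}(z)}/\sqrt{n!\,k!}$ in the range $k\le n$; the range $k>n$ follows from the same expression read through the reflection identity \eqref{Reflection}. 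Matching the coefficient of $s^n\overline{w}^{\,k}$ on both sides of the displayed formula for $\mathcal{G}$ then gives the claim.

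The conceptual content is light: the main labor is the bookkeeping in the Gaussian integral and the coefficient extraction, together with the routine justification that the Gaussian-dominated double series may be interchanged with the STFT integral. An alternative is to convert the two three-term recurrences for $\phi_n$ into ladder relations $A\phi_n=\sqrt{n}\,\phi_{n-1}$ and $A^{\dagger}\phi_n=\sqrt{n+1}\,\phi_{n+1}$ with $A=\sqrt{\pi}\,t+\tfrac{1}{2\sqrt{\pi}}\partial_t$, and to push them through $V_{\phi_k}\phi_n$; however, differentiating $f$ under the STFT reintroduces a window-derivative term $V_{\phi_k'}f$, coupling the $n$- and $k$-recurrences, so I expect the generating-function route to be the cleaner of the two.
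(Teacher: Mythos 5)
Your argument is correct, and it takes a genuinely different route from the paper, which offers no proof of this lemma at all: it is quoted from \cite[Section 1.9]{Folland}, where the Laguerre connection is derived in the Bargmann--Fock model (the Hermite functions become the monomials $z^n/\sqrt{n!}$ and the matrix coefficients of the shift operators are computed there). Your generating-function computation is a self-contained, time-domain alternative, and the details check out against the paper's normalizations: the Mehler-type kernel $\Phi(t,s)=2^{1/4}\exp\left(-\pi t^2+2\sqrt{\pi}\,st-\tfrac{s^2}{2}\right)$ is indeed the generating function of the paper's $\phi_n$; the Gaussian integral with $b=2\pi\overline{z}+2\sqrt{\pi}(s+\overline{w})$ yields the exponent $\tfrac{\pi}{2}\overline{z}^{\,2}-\pi x^2=-i\pi x\omega-\tfrac{\pi}{2}(x^2+\omega^2)$ together with the cross terms $s\overline{w}+\sqrt{\pi}\,\overline{w}(\overline{z}-2x)=s\overline{w}-\sqrt{\pi}\,z\overline{w}$, confirming your displayed $\mathcal{G}$; and the coefficient of $s^n\overline{w}^{\,k}$ in $\exp\left(\sqrt{\pi}\,\overline{z}s+s\overline{w}-\sqrt{\pi}\,z\overline{w}\right)$ is exactly $\overline{H_{n,k}(z)}/\sqrt{n!\,k!}$ in both ranges: for $k\le n$ your displayed sum matches the Laguerre expansion after the cancellation $\tfrac{1}{\sqrt{n!k!}}\sqrt{k!/n!}=\tfrac{1}{n!}$, and for $k>n$ the summation index is constrained to $j\ge k-n$, and reindexing reproduces the second branch of the paper's definition, i.e.\ the reflection identity \eqref{Reflection}, as you indicate. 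Two small refinements. First, the interchange of series and integral needs no Gaussian decay: by Cauchy--Schwarz, $\int_{\R}|\phi_n(t)\phi_k(t-x)|\,dt\le 1$, so Tonelli applies directly to the weights $|s|^n|w|^k/\sqrt{n!\,k!}$; equivalently, both series converge in $L^2(\R)$ and the bound $|V_gf|\le\|f\|_2\|g\|_2$ lets you pass to the limit termwise. Second, since both sides are entire in $(s,\overline{w})$ for fixed $(x,\omega)$, comparing coefficients is legitimate. As for what each approach buys: the Fock-space derivation behind the paper's citation explains structurally why Laguerre polynomials appear, whereas your computation is elementary, verifies the exact phase factor $e^{-i\pi x\omega}$ for this paper's specific conventions (a point where citations across differing conventions often go wrong --- note that the paper's own definition of $H_{n,k}$ even contains the typo $L_n^{r-n}$ for $L_n^{k-n}$, which your expansion implicitly corrects), and adapts readily to other normalizations. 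Your closing assessment of the ladder-operator alternative is also sound: the window-derivative term $V_{\phi_k'}f$ couples the two recurrences, so the generating-function route is the cleaner one.
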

For convenience, we will sometimes use the following polar decomposition of the STFT of a Hermite function with Hermite window. If \(k<n\), we write \begin{align}\label{Decomp}
    V_{\phi_k}\phi_n(z)=\rho_{n,k}(r)e^{i(k-n)\theta}e^{-\pi i xw},
\end{align} where  \(\rho_{n,k}(r)=\sqrt{\frac{k!}{n!}\pi^{n-k}}r^{n-k}L_k^{n-k}(\pi r^2)e^{-\pi r^2/2}\) is the radial function such that the right hand side agrees with the Laguerre connection. If \(n\leq k\) we write \[V_{\phi_k}\phi_n(z)=\rho_{n,k}(r)e^{i(n-k)\theta}e^{-i\pi xw},\] again with the same interpretation. By tensorization the same formulas carry over to the multidimensional case.

\subsection{Operator theory}

Some places in this work we will need the Schatten classes of operators, which are the operator analogues of the \(L^p\)-spaces. They are defined using the singular value decomposition of operators, as stated below~\cite{ReedSimon}.
\begin{prop}
    Let \(S\in\mathcal{B}(L^2(\R^d))\) be a compact operator. There are two orthonormal sets \(\{\psi_n\}_{n\in\N}\), \(\{\phi_n\}_{n\in\N}\) in \(L^2(\R^d)\) and a sequence \(\{\sigma_n\}_{n\in\N}\) of non-negative numbers converging to \(0\) such that \begin{align*}
        S=\sum_{n=0}^{\infty} \sigma_n\psi_n\otimes\phi_n
    \end{align*}
    holds in the operator norm. 
\end{prop}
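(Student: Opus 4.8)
The plan is to reduce everything to the spectral theorem for compact self-adjoint operators, applied not to $S$ itself (which need not be normal) but to the auxiliary operator $S^*S$. First I would observe that $S^*S$ is compact, being the composition of the compact operator $S$ with the bounded operator $S^*$, and that it is self-adjoint and positive, since $\langle S^*S f, f\rangle = \|Sf\|^2 \geq 0$ for all $f$. These three properties are exactly the hypotheses of the spectral theorem for compact self-adjoint operators, which supplies an orthonormal family of eigenvectors and eigenvalues tending to zero.

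From that theorem I obtain an orthonormal family $\{\phi_n\}$ of eigenvectors of $S^*S$ corresponding to the \emph{strictly positive} eigenvalues $\lambda_n$, with $\lambda_n \to 0$. I would then set $\sigma_n := \sqrt{\lambda_n}$ and define $\psi_n := \sigma_n^{-1} S\phi_n$; the positivity of $\lambda_n$ makes this legitimate. Checking that $\{\psi_n\}$ is orthonormal is a one-line computation, namely $\langle \psi_n, \psi_m\rangle = (\sigma_n\sigma_m)^{-1}\langle S^*S\phi_n,\phi_m\rangle = \delta_{nm}$, using that the $\phi_n$ are orthonormal eigenvectors.

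It remains to prove the decomposition. The key structural fact is that $\ker S = \ker S^*S$, which follows because $S^*Sf = 0$ forces $\|Sf\|^2 = \langle S^*Sf,f\rangle = 0$. Writing any $f$ as its expansion along $\{\phi_n\}$ plus a component in $\ker S^*S = \ker S$, the latter is annihilated by $S$, so $Sf = \sum_n \sigma_n \langle f,\phi_n\rangle \psi_n = \sum_n \sigma_n (\psi_n\otimes\phi_n)(f)$, using the convention $(\psi\otimes\phi)(f) = \langle f,\phi\rangle\psi$ fixed in the notation section. This gives the identity pointwise; to upgrade it to operator-norm convergence I would estimate, for the partial sums $S_N$, that $\|(S-S_N)f\|^2 = \sum_{n>N}\sigma_n^2|\langle f,\phi_n\rangle|^2 \leq (\sup_{n>N}\sigma_n^2)\,\|f\|^2$, whence $\|S - S_N\| \leq \sup_{n>N}\sigma_n$, which tends to $0$ precisely because $\sigma_n\to 0$.

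The only real content is the spectral theorem for compact self-adjoint operators; everything after that is bookkeeping. If I had to name an obstacle, it would be the careful treatment of the kernel: ensuring that components of $f$ lying in $\ker S$ (equivalently the zero-eigenspace of $S^*S$) do not contribute, and that the orthonormal \emph{sets} $\{\psi_n\}$, $\{\phi_n\}$ need only index the nonzero singular values rather than form bases of $L^2(\R^d)$. This is also the point where one must be slightly careful if $S^*S$ has only finitely many nonzero eigenvalues, in which case the series degenerates to a finite sum and norm convergence is automatic.
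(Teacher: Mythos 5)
Your proof is correct: the reduction to the spectral theorem for the compact, positive, self-adjoint operator $S^*S$, the definitions $\sigma_n=\sqrt{\lambda_n}$ and $\psi_n=\sigma_n^{-1}S\phi_n$, the identification $\ker S=\ker S^*S$, and the tail estimate $\|S-S_N\|\leq \sup_{n>N}\sigma_n$ constitute exactly the standard derivation of the singular value decomposition, and each step (orthonormality of $\{\psi_n\}$, passing $S$ through the expansion by boundedness, Bessel's inequality for the norm bound) checks out. The paper states this proposition as a known preliminary without giving any proof, so there is no competing argument to compare against; the only cosmetic remark is that to match the statement's indexing over all of $\N$ in the finite-rank case one pads with $\sigma_n=0$ and arbitrary further orthonormal vectors (possible since $L^2(\R^d)$ is infinite-dimensional), a point you already flagged.
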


For \(p\in[1,\infty)\) the \textit{Schatten class} \(\mathcal{S}^p\) of operators is defined by \begin{align*}
    \mathcal{S}^p=\Big\{S\in\mathcal{B}(L^2(\R^d))\colon S\text{ compact, }\{\sigma_n\}\in\ell^p\Big\}.
\end{align*}
With the norm \(\|S\|_{\mathcal{S}^p}=\|\{\sigma_n\}\|_{\ell^p}\), each Schatten class becomes a Banach space. We also define \(\mathcal{S}^\infty=\mathcal{B}(L^2(\R^d)).\) For \(p,q\in[1,\infty]\) we have \begin{align*}
    p\leq q\implies\|S\|_{\mathcal{S}^q}\leq \|S\|_{\mathcal{S}^p}.
\end{align*}

The smallest Schatten class \(\mathcal{S}^1\) is often referred to as the \textit{trace class operators}. For a positive operator \(S\), the trace is defined to be \begin{align*}
    \mathrm{tr}(S)=\sum_{n\in\N} \langle S e_n,e_n\rangle,
\end{align*}
where \(\{e_n\}_{n\in\N}\) is any orthonormal basis of \(L^2(\R^d)\). It can be shown that the value of the trace is independent of the choice of basis. If \(S\in\mathcal{S}^1\), then the trace is well-defined and \(\mathrm{tr}(S)=\sum_{n\in\N} \sigma_n\). Likewise, if we define \(|S|^p\) via functional calculus then we also have \(\|S\|_{\mathcal{S}^p}^p=\mathrm{tr}(|S|^p)\). The class \(\mathcal{S}^2\) is called the \textit{Hilbert-Schmidt operators}, and is often denoted by \(\mathcal{HS}\). It is a Hilbert space under the inner product \begin{align*}
    \langle S,T\rangle_{\mathcal{HS}}=\mathrm{tr}(ST^*).
\end{align*}

\subsection{Quantum harmonic analysis}

In this section we will recall some of the fundamental notions in quantum harmonic analysis, introduced by Werner in \cite{Werner}. The idea underlying it all is to introduce operator analogues of classical function theoretic notions, and then study how they interact with the classical ones. For further detail we direct the reader to \cite{bible1}. First we will define convolutions, for which we will need the operator shift. Given \(z\in\R^{2d}\) and \(S\in B(L^2(\R^d))\) we define the shifted operator \(\alpha_z(S)\) by \begin{align*}
    \alpha_z(S)=\pi(z)S\pi(z)^*.
\end{align*}

Using the properties of the time-frequency shift, one can easily check that \(\alpha_{z}\alpha_{w}(S)=\alpha_{z+w}(S)=\alpha_{w}\alpha_{z}(S)\) and that \(\|\alpha_z(S)\|_{\mathrm{op}}=\|S\|_{\mathrm{op}}\), so one should think of \(\alpha\) as an operator analogue of the translation operator. We also have an operator analogue of reflection. Namely, if we let \(P\) denote the parity operator, \(Pf(t)=f(-t)\), we define the operator \(\check{S}\) by \begin{align*}
    \check{S}=PSP.
\end{align*} 
Using these two operations, Werner defined two convolutions. Given a function \(f\in L^1(\R^{2d})\) and an operator \(S\in\mathcal{S}^1(\R^d)\) we define the trace class operator \(f\star S\) by \begin{align}\label{FOConv}
    f\star S=\int_{\R^{2d}}f(z)\alpha_z(S)\;dz,
\end{align}
where we interpret the integral weakly. For two operators \(S,T\in\mathcal{S}^1(\R^d)\) we define the \(L^1\)-function \(S\star T\) by \begin{align}\label{OOConv}
    S\star T(z)=\mathrm{tr}\left(S\alpha_z\left(\check{T}\right)\right).
\end{align}
While initially only defined for \(L^1\)-functions and trace class operators, the convolutions can be extended to other spaces. By duality, the convolutions extend to the other Lebesgue spaces and Schatten classes, for which we also have an analogue of Young's inequality: If \(p,q,r\in[1,\infty]\) such that \(\frac{1}{p}+\frac{1}{q}=1+\frac{1}{r}\) and we have \(f\in L^p(\R^{2d})\), \(S\in \mathcal{S}^p\) and \(T\in\mathcal{S}^q\), then the convolutions satisfy \begin{align*}
    \|f\star T\|_{\mathcal{S}^r}\leq \|f\|_{L^p}\|T\|_{\mathcal{S}^q}, \\
    \|S\star T\|_{L^r}\leq\|S\|_{\mathcal{S}^p}\|T\|_{\mathcal{S}^q}. 
\end{align*}
Young's inequality ensures that both convolutions are separately continuous in each variable. The most general extension of the convolutions we will need in this work is to tempered distributions and Schwartz operators. It was shown in \cite{SchwartzOps} that the convolutions extend to these classes, and that they also are separately continuous in each variable. The quantum convolutions inherit several properties of regular convolution. For example, they are bilinear, commutative and associative. The associativity is particularly striking, as it shows the interplay between the two quantum convolutions and the classical one. \newline

For illustrative purposes, we mention that in the case where \(S\) and \(T\) are rank-one operators \(f\otimes f\) and \(g\otimes g\), the convolutions are familiar objects from time-frequency analysis. Given an \(F\in L^1(\R^{2d})\), we have that \begin{align}\label{LocOpIsConv}
    F\star (g\otimes g)=A_F^g \end{align} and \begin{align}\label{SpecIsConv}
    (f\otimes f)\star(\check{g}\otimes \check{g})(z)=|V_{g}f(z)|^2.
\end{align}
Replacing the rightmost rank-one operator with a general one, we recover another familiar object, namely the Cohen's class distributions:\begin{align}\label{CCIsConv}
    f\otimes f\star \check{A}(z)=Q_{a_A}(f)(z),
\end{align}
where \(a_A\) is the Weyl symbol of \(A\). Likewise, a cross-Cohen's class distribution \(Q_{a_A}(f,g)\) can be written as \(Q_{a_A}(f,g)=f\otimes g\star \check{A}.\) Following \cite{bible2}, we will often describe a Cohen's class distribution as an operator convolution, and write \(Q_A(f)=Q_{a_A}(f)\). Similarly, we will write \(Q_A(f,g)=(f\otimes g)\star \check{A}.\)

In quantum mechanics, positive operators with trace \(1\) are called states, and they form a convex subset of \(\mathcal{B}(L^2(\R^d))\). The extreme points of this set are called pure states, and it can be shown that the pure states are the rank-one operators \(f\otimes f\) with \(\|f\|=1\)~\cite[Chapter 13]{deGossonBook}. With this in mind, if \(S\) is a state we will often refer to the function-operator convolution \(F\star S\) as a \textit{mixed-state localization operator}. In fact, we will interpret any function-operator convolution as localizing some time-frequency distribution.

In quantum harmonic analysis, there is also an operator analogue of the Fourier transform. For a trace class operator \(S\in \mathcal{S}^1\) we define the Fourier-Wigner transform of \(S\) as the function \begin{align}\label{FWDef}
    \mathcal{F}_W(S)(z)=e^{-\pi i \langle x,\omega\rangle}\mathrm{tr}\left(\pi(-z)S\right).
\end{align}
As expected, the Fourier-Wigner transform can be extended to Schwartz operators, and the result will be a tempered distribution. The Fourier-Wigner transform is invertible, and its inverse can be written in terms of the Weyl transform and the symplectic Fourier transform. In fact, we have
\begin{align*}
    \mathcal{F}_\sigma\mathcal{F}_W(L_f)=f, \qquad
    L_{\mathcal{F}_\sigma\mathcal{F}_W(T)}=T.
\end{align*}
The Fourier-Wigner transform of a rank-one operator is another familiar object, namely the ambiguity function: \begin{align*}
    \mathcal{F}_W(f\otimes f)(z)=e^{\pi i \langle x,\omega\rangle}V_{f}f(z).
\end{align*}

\subsection{Notions from quantum time-frequency analysis}

The authors of \cite{OpSTFT} introduced a short-time Fourier transform of operators as a tool to do time-frequency analysis of operators. Naturally, there is a connection to classical time-frequency analysis of functions, but also to quantum harmonic analysis, both of which we will make use of in what follows. 

\begin{definition}
    Let \(S,T\in \mathcal{S}^2\). The \textit{short-time Fourier transform} of \(T\) with respect to \(S\) is \begin{align*}
        \mathfrak{V}_ST(z)=S^*\pi(z)^*T.
    \end{align*}
\end{definition}
Unlike the classical STFT, which is a scalar-valued function, the operator STFT is operator-valued. In particular, it can be shown that \(\mathfrak{V}_ST\) is in the Bochner-Lebesgue space \( L^2(\R^{2d};\mathcal{S}^2)\). The operator STFT inherits several properties from the classical STFT, such as a Moyal identity and a reconstruction property. For details, we refer to \cite{OpSTFT}. The main property we will need is that its image in the Bochner-Lebesgue space is a reproducing kernel Hilbert space.
\begin{lemma}
    Let \(S\in\mathcal{S}^2\). The set \begin{align*}
        \mathfrak{V}_S(\mathcal{S}^2)=\{\mathfrak{V}_ST\colon T\in\mathcal{S}^2\}
    \end{align*}
    is a uniform reproducing kernel Hilbert space. The reproducing kernel is \begin{align*}
        K_S(z,z')=S^*\pi(z)^*\pi(z')S.
    \end{align*}
\end{lemma}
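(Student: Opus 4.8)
The plan is to reduce the whole statement to the operator Moyal identity for $\mathfrak{V}_S$, which I would first record in the form $\langle \mathfrak{V}_S T_1, \mathfrak{V}_S T_2\rangle_{L^2(\R^{2d};\mathcal{S}^2)} = \langle T_1, T_2\rangle_{\mathcal{S}^2}$ under the normalization $\|S\|_{\mathcal{S}^2}=1$ (in general a factor $\|S\|_{\mathcal{S}^2}^2$ appears and is normalized away, which is also what makes the stated kernel literally correct). To obtain it I would expand $\langle \mathfrak{V}_S T_1, \mathfrak{V}_S T_2\rangle_{L^2} = \int_{\R^{2d}} \mathrm{tr}\!\left(S^*\pi(z)^* T_1 T_2^* \pi(z) S\right) dz$, use cyclicity of the trace to rewrite the integrand as $\mathrm{tr}\!\left(\pi(z)SS^*\pi(z)^* T_1 T_2^*\right) = \mathrm{tr}\!\left(\alpha_z(SS^*)\, T_1T_2^*\right)$, and then invoke the standard quantum-harmonic-analysis identity $\int_{\R^{2d}}\alpha_z(R)\,dz = \mathrm{tr}(R)\,\mathrm{Id}$ with $R = SS^*$. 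Since $SS^*$ is a product of two Hilbert-Schmidt operators it is trace class with $\mathrm{tr}(SS^*) = \|S\|_{\mathcal{S}^2}^2$, closing the computation.

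From the Moyal identity two things follow at once. First, $\mathfrak{V}_S$ is (a scalar multiple of) an isometry from $\mathcal{S}^2$ into $L^2(\R^{2d};\mathcal{S}^2)$, so its image is a closed subspace and hence a Hilbert space in the inherited inner product. Second, the point evaluations are bounded \emph{uniformly} in $z$: from $\|\mathfrak{V}_S T(z)\|_{\mathcal{S}^2} = \|S^*\pi(z)^* T\|_{\mathcal{S}^2}\le \|S\|_{\mathrm{op}}\|T\|_{\mathcal{S}^2}$ together with $\|T\|_{\mathcal{S}^2} = \|\mathfrak{V}_S T\|_{L^2}$, I obtain $\|\mathrm{ev}_z\|\le \|S\|_{\mathrm{op}}$ with a bound independent of $z$. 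This independence is precisely the uniformity assertion, and the boundedness of every $\mathrm{ev}_z$ upgrades the closed subspace to a uniform reproducing kernel Hilbert space of $\mathcal{S}^2$-valued functions.

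It remains to identify the operator-valued kernel. For fixed $z'\in\R^{2d}$ and $A\in\mathcal{S}^2$ I would read the operator $K_S(z,z') = S^*\pi(z)^*\pi(z')S$ as acting on $A$ by left composition, $K_S(z,z')[A] = S^*\pi(z)^*\pi(z')SA$, and observe the key identity $K_S(z,z')[A] = \mathfrak{V}_S(\pi(z')SA)(z)$. Here $SA\in\mathcal{S}^1\subset\mathcal{S}^2$ as a product of Hilbert-Schmidt operators and $\pi(z')$ is unitary, so $\pi(z')SA\in\mathcal{S}^2$ and the function $z\mapsto K_S(z,z')[A]$ genuinely lies in $\mathfrak{V}_S(\mathcal{S}^2)$. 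The reproducing property is then a one-line consequence of Moyal together with cyclicity of the trace:
\begin{align*}
    \langle \mathfrak{V}_S T,\, K_S(\cdot,z')[A]\rangle_{L^2} &= \langle T,\, \pi(z')SA\rangle_{\mathcal{S}^2} \\
    &= \mathrm{tr}\!\left(S^*\pi(z')^* T A^*\right) = \langle \mathfrak{V}_S T(z'),\, A\rangle_{\mathcal{S}^2}.
\end{align*}

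The computations are short, so the only genuine subtleties I anticipate are bookkeeping ones: making precise the sense in which $K_S(z,z')$ is an element of $\mathcal{B}(\mathcal{S}^2)$ (namely left composition on $A$), tracking the normalization constant $\|S\|_{\mathcal{S}^2}$ so that the stated kernel holds verbatim, and justifying the weak convergence of the covariance integral $\int\alpha_z(SS^*)\,dz = \mathrm{tr}(SS^*)\,\mathrm{Id}$ together with the interchange of trace and integral in the Moyal computation. None of these are serious obstacles given the properties of $\mathfrak{V}_S$ recalled from the cited reference, so the main work is simply assembling the Moyal identity and carrying out the kernel-recognition step $K_S(\cdot,z')[A] = \mathfrak{V}_S(\pi(z')SA)$.
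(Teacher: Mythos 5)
Your proof is correct, but note that the paper itself does not prove this lemma at all: it is imported verbatim from \cite{OpSTFT}, so there is no in-paper argument to compare against. Your route --- the operator Moyal identity obtained from $\int_{\R^{2d}}\alpha_z(SS^*)\,dz=\mathrm{tr}(SS^*)\,\mathrm{Id}$ (weakly), uniform boundedness of the evaluations via the ideal property $\|S^*\pi(z)^*T\|_{\mathcal{S}^2}\le\|S\|_{\mathrm{op}}\|T\|_{\mathcal{S}^2}$, and the kernel-recognition step $K_S(\cdot,z')[A]=\mathfrak{V}_S(\pi(z')SA)$ with $SA\in\mathcal{S}^1\subset\mathcal{S}^2$ --- is the standard argument for this result, and your flagging of the normalization $\|S\|_{\mathcal{S}^2}=1$ (without which the kernel and the reproducing formula acquire a factor $\mathrm{tr}(SS^*)$) is exactly the right bookkeeping.
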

We anticipate the connection to quantum harmonic analysis and call the space \( \mathfrak{V}_S(\mathcal{S}^2)\) the \textit{quantum Gabor space}. Because of the reproducing kernel structure we may define a projection from \( L^2(\R^{2d};\mathcal{S}^2)\) to \(\mathfrak{V}_S(\mathcal{S}^2)\) via the formula \begin{align}\label{ProjFormula}
    P_S\Psi(z)=\int_{\R^{2d}} K_S(z,z')\Psi(z')\;dz',
\end{align}
which equals \(\mathfrak{V}_S\mathfrak{V}_S^*\Psi\). The authors of \cite{OpSTFT} also introduced Toeplitz operators on the quantum Gabor space. Given a mask \(F\in L^{\infty}(\R^{2d})\) we define the Toeplitz operator on \(\mathfrak{V}_S(\mathcal{S}^2)\) with mask \(F\) by \begin{align}\label{ToeplitzDef}
    T_F(\Psi)=P_S(F\cdot\Psi),
\end{align}
for \(\Psi\in \mathfrak{V}_S(\mathcal{S}^2)\). If the operator is sufficiently nice, we may also define Toeplitz operators with masks in \(\mathscr{S}'(\R^{2d})\) by interpreting everything in the sense of distributions. Just as the Gabor-Toeplitz operators are unitarily equivalent to localization operators with the same mask, it was shown that the Toeplitz operator \(T_F\) is unitarily equivalent to the operator \begin{align*}
    R\mapsto (F\star (SS^*))\circ R
\end{align*}
on \(\mathcal{S}^2\) via the conjugation operator \(\Theta(T_F)=\mathfrak{V}_S^*T_F\mathfrak{V}_S.\) So the quantum Gabor-Toeplitz operators are unitarily equivalent to composing with a mixed-state localization operator with the same mask. 

\subsection{Reinhardt domains}
Here we present some fundamental properties and examples of Reinhardt domains, for further details, see \cite{SCVBook}.
Reinhardt domains are subsets of \(\C^d\) that can be realized as a ``complex solid of revolution." We make this analogy precise by introducing the following two objects.
\begin{definition}
        We call the set \[V^d=\{(r_1,r_2,\dots ,r_d)\in \mathbb{R}^d\colon r_i\geq 0 \;\forall \;i\in\{1,2,\dots,d\}\}\] \textit{the absolute space}. The natural projection of $\C^{d}$ onto $V^d$ is \[\tau(z_1,z_2,\dots, z_d)=(|z_1|,|z_2|,\dots, |z_n).\]
\end{definition}
Any subset of \(\C^d\) can be projected onto \(V^d\). The defining property of Reinhardt domains is that they can be recovered from their projected image in absolute space. In the following definition \(\tau^{-1}(W)\) will denote the pre-image of \(W\) under \(\tau\).
\begin{definition}
    Let \(\Omega\subset \C^d\) be open. \(\Omega\) is a \textit{Reinhardt domain} if for all \(z\in \Omega\), we have \(\tau^{-1}(\{\tau(z)\})\subset \Omega.\)
\end{definition}
Any open subset \(W\subset V^d\) defines a Reinhardt domain, namely \(\tau^{-1}(W)\). The following lemma, which is Theorem \(1.2\) in \cite{SCVBook}, shows that these are in fact all Reinhardt domains, justifying their interpretation as solids of revolution.
\begin{lemma}\label{Lem:Reinhardt}
    An open set $\Omega\subset \C^d$ is a Reinhardt domain if and only if there is an open set $W\subset V^d$ such that $\Omega=\tau^{-1}(W).$
\end{lemma}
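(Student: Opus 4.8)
The plan is to prove the two implications separately, and for the forward direction to use $W=\tau(\Omega)$ as the candidate open set. The ($\Leftarrow$) direction is the routine one. Assuming $\Omega=\tau^{-1}(W)$ for an open $W\subset V^d$, I would first note that $\Omega$ is open because $\tau$ is continuous (each coordinate map $z\mapsto|z_j|$ is continuous from $\C^d$ to $\R$), so the preimage of the open set $W$ is open. The saturation property defining a Reinhardt domain is then purely set-theoretic: if $z\in\tau^{-1}(W)$ then $\tau(z)\in W$, and any $z'$ with $\tau(z')=\tau(z)$ also satisfies $\tau(z')\in W$, hence the entire fibre $\tau^{-1}(\{\tau(z)\})$ lies in $\tau^{-1}(W)=\Omega$, which is exactly the Reinhardt condition.

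For the ($\Rightarrow$) direction, given a Reinhardt domain $\Omega$, I would set $W:=\tau(\Omega)$ and establish both that $\Omega=\tau^{-1}(W)$ and that $W$ is open. The identity $\Omega=\tau^{-1}(\tau(\Omega))$ is where the Reinhardt hypothesis is used. The inclusion $\Omega\subset\tau^{-1}(\tau(\Omega))$ holds for any set. For the reverse inclusion, a point $z\in\tau^{-1}(\tau(\Omega))$ satisfies $\tau(z)=\tau(w)$ for some $w\in\Omega$; the saturation property $\tau^{-1}(\{\tau(w)\})\subset\Omega$ then forces $z\in\tau^{-1}(\{\tau(w)\})\subset\Omega$. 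Thus $\Omega=\tau^{-1}(W)$ as required.

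The hard part will be showing that $W=\tau(\Omega)$ is open in $V^d$, since images of open sets under a merely continuous map need not be open. The key observation I would exploit is that $\tau$ is in fact an \emph{open} map. Indeed, $\tau$ is the $d$-fold product of the single-variable modulus map $|\cdot|\colon\C\to[0,\infty)$, and a product of open maps is open, since it sends a basic product of open sets $\prod_j U_j$ to the product $\prod_j|U_j|$ of their images. So it suffices to verify that $|\cdot|\colon\C\to[0,\infty)$ is open, which I would do by inspecting the image of a disc: away from the origin the reverse triangle inequality shows that a small disc about $z_0$ maps onto an open interval around $|z_0|$, while a disc about $0$ of radius $\varepsilon$ maps onto $[0,\varepsilon)$, which is open in the subspace topology of $[0,\infty)$. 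Granting that $\tau$ is open, $W=\tau(\Omega)$ is open, and combined with the identity $\Omega=\tau^{-1}(W)$ this completes the proof.
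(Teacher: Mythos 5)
Your proof is correct. Note that the paper itself offers no proof of this lemma: it is quoted as Theorem 1.2 of the cited reference \cite{SCVBook}, so there is no in-paper argument to compare against; your proposal supplies a complete, self-contained proof, and it is the standard one. Both implications are sound: the ($\Leftarrow$) direction correctly combines continuity of $\tau$ (openness of the preimage) with the purely set-theoretic saturation of $\tau^{-1}(W)$, and in the ($\Rightarrow$) direction the choice $W=\tau(\Omega)$ together with the identity $\Omega=\tau^{-1}(\tau(\Omega))$ is exactly where the Reinhardt hypothesis enters. Your key structural observation, that $\tau$ is an open map because it is a finite product of the modulus maps and the image of a basic box $\prod_j U_j$ under a product map is $\prod_j |U_j|$, is valid. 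One small point to make explicit in the openness check for $|\cdot|\colon\C\to[0,\infty)$: the reverse triangle inequality only yields the containment $|D(z_0,\varepsilon)|\subset(|z_0|-\varepsilon,|z_0|+\varepsilon)$; for the image to actually \emph{contain} an open interval around $|z_0|$ you should note that for any $t$ in that interval (with $t\geq 0$) the point $t\,z_0/|z_0|$ lies in the disc and has modulus $t$. You should also cover the intermediate case $0<|z_0|<\varepsilon$, where the disc contains the origin and the image is $[0,|z_0|+\varepsilon)$, which, like your $[0,\varepsilon)$ case, is open in the subspace topology of $[0,\infty)$ inherited from $\R$. These are routine verifications and do not affect the validity of the argument.
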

The set \(W\) in the above lemma is referred to as the \textit{Reinhardt shadow} of \(\Omega\) \cite{ReinhardtArticle}. As solids of revolution, all Reinhardt domains \(\Omega=\tau^{-1}(W)\) admit a nice representation in polycylindrical coordinates: \begin{align}\label{eq:Polydecomp}
    \Omega=\Bigl\{(r_1e^{i\theta_1},\dots,r_de^{i\theta_d})\in\C^d\colon (r_1,\dots,r_d)\in W, (\theta_1,\dots,\theta_d)\in \T^d\Bigr\},
\end{align}
where \(\T=[0,2\pi]\). Many familiar domains are Reinhardt domains. We collect some of them in the example below. For more exotic examples, see~\cite{ReinhardtExamples}. \begin{exmp}\label{Ex:Reinhardts}
    \begin{itemize}
        \item The ball in \(\C^d\) with radius \(R\) is a Reinhardt domain. Its shadow is \(\{r\in V^d\colon \sum_{j=1}^d r_j^2\leq R^2\}\).
        \item The polydisc in \(\C^d\) with polyradius \(R=(R_1,R_2,\dots, R_d)\) is a Reinhardt domain. Its shadow is \(\prod_{j=1}^d [0,R_j]\subset V^d\).
        \item In general, any \(p\)-norm ball centered at \(0\) will be a Reinhardt domain. The shadow is \(\{r\in V^d\colon \sum_{j=1}^d r_j^p\leq R^p\}\). Note that this is true even if \(p\in (0,1)\), although these domains will not be convex.
        \item Given a multi-index \(\alpha\in \N^d\) the set \(\{z\in\C^d\colon |z^\alpha|^2\leq R\}\) is also a Reinhardt domain. The shadow is \(\{r\in V^d\colon \sum_{j=1}^d r_j^{2\alpha_j}\leq R\}\). Note that this Reinhardt shadow is not symmetric along the line \(r_1=r_2=\dots =r_d\).
    \end{itemize}
\end{exmp}

\section{Hagedorn wavepackets}\label{Sec:Hagedorn}

Hagedorn wavepackets are a generalization of the product Hermite functions that allows for more flexibility. For instance, the variables of a Hagedorn wavepacket may depend on each other, unlike the Hermite functions where all variables are independent. We will base our treatment of Hagedorn wavepackets on~\cite{Hagedorn1,Hagedorn2}. Their construction depends on two parameters: two complex matrices \(Q,P\in\C^{d\times d}\) forming a \textit{Lagrangian frame}. A Lagrangian frame must satisfy several conditions. Namely, \(Q\) and \(P\) must be invertible, and satisfy 
\begin{align}\label{Lagrange}
    Q^TP-P^TQ=0,\quad Q^*P-P^*Q=2i\mathrm{Id}_d.
\end{align}
Condition (\ref{Lagrange}) is equivalent to 
\begin{align}\label{Symplectic}
    T=\begin{pmatrix}
        \mathrm{Re}(Q) & \mathrm{Im}(Q) \\ \mathrm{Re}(P) & \mathrm{Im}(P)
    \end{pmatrix} \text{ is symplectic.}
\end{align}
Thus, we may treat a Lagrangian frame either as two invertible matrices \(Q,P\in \C^{d\times d}\), or as the symplectic matrix \(T\in\R^{2d\times 2d}\). The product Hermite functions are defined by repeated use of the ladder operators \(A_k^\dagger\) defined by \begin{align*}
    A_k^\dagger f(t)=A_k^\dagger f(t_1,t_2,\dots, t_d)= \sqrt{\pi}t_kf(t)-\frac{1}{2\sqrt{\pi}}\frac{\partial f}{\partial t_k}(t),
\end{align*}
on the \(d\)-dimensional Gaussian \(\phi_0(t)=2^{\frac{d}{4}}e^{-\pi |t|^2}\), see e.g. \cite{Folland}. The Hagedorn wavepackets are constructed in much the same way. Given \( Q, P\) as above we consider the Gaussian wavepacket \begin{align*}
    \phi_0[Q,P](t)=2^{\frac{d}{4}}\mathrm{det}(Q)^{-\frac{1}{2}}e^{\pi i\langle t,PQ^{-1}t\rangle},\quad t\in\mathbb{R}^d.
\end{align*}
From condition (\ref{Lagrange}) we may deduce that \(\mathrm{Im}(PQ^{-1})=(QQ^{*})^{-1}>0\) \cite{Hagedorn1}, and \(\phi_0[Q,P]\) is thus square integrable with norm \(1\). The parameters now also get a physical interpretation:  \(QQ^*\) is the covariance matrix of the position density \(|\phi_0[Q,P](t)|^2\), while \(PP^*\) is the covariance of the momentum density \(|\mathcal{F}(\phi_0[Q,P])(\xi)|^2\).\newline

The higher-order Hagedorn wavepackets will be generated by use of the generalized ladder operator given in vector form by\begin{align*}
    A^\dagger[Q,P]=\sqrt{\pi}i\left(P^*\cdot \mathbf{t}+Q^*\left(\frac{i\nabla}{2\pi}\right)\right).
\end{align*}
The practical interpretation is given by the lemma below, which collects several important properties of the ladder operator. A proof can be found in \cite{Hagedorn1}.
\begin{lemma}
    Let \(Q,P\) satisfy (\ref{Lagrange}). Let \(A^\dagger\) denote the ladder operator \(A^\dagger[Q,P]\). The \(k\)-th component of the ladder operator acts on a Schwartz function \(f\) by \begin{align*}
        A_k^\dagger f(t)=\sqrt{\pi}i\left(\sum_{l=1}^d  \overline{p}_{lk}t_lf(t)+\frac{i}{2\pi}\overline{q}_{lk}\frac{\partial f}{\partial t_l}(t)\right).
    \end{align*}
    Moreover, the components of \(A^\dagger\) commute, so we have \begin{align*}
        A_k^\dagger A_j^\dagger=A_j^\dagger A_k^\dagger.
    \end{align*} We may therefore define \(A^\dagger f\) as \begin{align*}
        A^\dagger f(t)=A_1^\dagger\left(A_2^\dagger \left(\dots A_d^\dagger f(t)\right)\right).
    \end{align*}
    Its adjoint operator is \begin{align*}
        A[Q,P]=-\sqrt{\pi}i\left(P^T\cdot \mathbf{t}+Q^T\left(\frac{i\nabla}{2\pi} \right)\right),
    \end{align*}
    interpreted in the same component-wise way.
\end{lemma}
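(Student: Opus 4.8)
The plan is to handle the three assertions separately, since they rely on different ingredients: the component formula is pure notational unpacking, the adjoint is a formal integration by parts, and only the commutation relation uses the Lagrangian structure (\ref{Lagrange}). For the component formula I would simply read off the \(k\)-th entry of the vector operator \(A^\dagger = \sqrt{\pi}i\big(P^*\mathbf t + Q^*(i\nabla/2\pi)\big)\). Writing \((P^*)_{kl} = \overline{p_{lk}}\) and \((Q^*)_{kl} = \overline{q_{lk}}\) and distributing the matrix--vector products over the multiplication operators \(t_l\) and the derivatives \(\partial_l\) gives the stated expression \(A_k^\dagger f = \sqrt{\pi}i\big(\sum_l \overline{p_{lk}}\, t_l f + \tfrac{i}{2\pi}\overline{q_{lk}}\,\partial_l f\big)\); no hypotheses on \(Q,P\) are needed here.

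For the commutation relation I would split each component as \(A_k^\dagger = \sqrt{\pi}i\big(B_k + \tfrac{i}{2\pi}C_k\big)\), where \(B_k = \sum_l \overline{p_{lk}}\, t_l\) is a multiplication operator and \(C_k = \sum_l \overline{q_{lk}}\,\partial_l\) is a constant-coefficient differential operator. Expanding \([A_k^\dagger, A_j^\dagger]\) bilinearly, the purely multiplicative commutator \([B_k,B_j]\) and the purely differential one \([C_k,C_j]\) vanish, so only the two cross terms survive. Using the canonical commutation relation \([t_l,\partial_m] = -\delta_{lm}\) on Schwartz functions, I would evaluate \([B_k,C_j] = -\sum_l \overline{p_{lk}}\,\overline{q_{lj}} = -\overline{(P^TQ)_{kj}}\) and \([C_k,B_j] = \sum_m \overline{q_{mk}}\,\overline{p_{mj}} = \overline{(Q^TP)_{kj}}\). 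Collecting the surviving terms gives \([A_k^\dagger,A_j^\dagger] = -\tfrac{i}{2}\,\overline{(Q^TP - P^TQ)_{kj}}\), which is exactly where the first condition in (\ref{Lagrange}) enters: \(Q^TP - P^TQ = 0\) forces the commutator to vanish, making the iterated definition of \(A^\dagger\) independent of the order of composition.

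Finally, for the adjoint I would compute the formal adjoint of \(A_k^\dagger\) on the Schwartz class, using that multiplication by the real coordinate \(t_l\) is self-adjoint, that \(\partial_l^* = -\partial_l\) by integration by parts, and that scalar prefactors get conjugated. Tracking the conjugations turns \(\sqrt{\pi}i\,\overline{p_{lk}}\) into \(-\sqrt{\pi}i\,p_{lk}\) in front of \(t_l\) and flips the sign on the derivative term; rewriting the result with a single prefactor recovers precisely the \(k\)-th component of \(A[Q,P] = -\sqrt{\pi}i\big(P^T\mathbf t + Q^T(i\nabla/2\pi)\big)\).

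The only step with genuine content is the commutation computation, and there the main thing to be careful about is the index bookkeeping that identifies the two cross-term commutators with the matrix entries \((P^TQ)_{kj}\) and \((Q^TP)_{kj}\); once that is set up correctly, the hypothesis (\ref{Lagrange}) does all the work. The component formula and the adjoint are routine once the matrix--operator notation is unwound.
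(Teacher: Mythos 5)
Your proof is correct and is essentially the standard argument: the paper itself gives no proof of this lemma, deferring to \cite{Hagedorn1}, where the same direct verification appears — unpacking the matrix--vector notation for the component formula, computing \([A_k^\dagger,A_j^\dagger]=-\tfrac{i}{2}\,\overline{(Q^TP-P^TQ)_{kj}}\) from the canonical commutation relations so that the first (isotropy) identity in (\ref{Lagrange}) kills the commutator, and taking the formal adjoint on Schwartz functions by integration by parts. Your index bookkeeping and signs all check out, including the accurate observation that only \(Q^TP-P^TQ=0\) is needed for commutativity of the raising components, the second condition \(Q^*P-P^*Q=2i\,\mathrm{Id}_d\) being what normalizes \([A_j,A_k^\dagger]=\delta_{jk}\).
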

Using the above lemma we can now define the higher-order Hagedorn wavepackets. Note that for a multi-index \(k\in\N_0^d\) we define \(\left(A^\dagger\right)^k=(A_1^\dagger)^{k_1}(A_2^\dagger)^{k_2}\dots (A_d^\dagger)^{k_d} \), which is well-defined, since the components commute.
\begin{definition}
    Let \(Q,P\) satisfy (\ref{Lagrange}), and let \(k\in \N_0^d\). The \textit{\(k\)-th Hagedorn wavepacket} is given by \begin{align*}
        \phi_k[Q,P]=\frac{1}{\sqrt{k!}}\left(A^\dagger[Q,P]\right)^k\phi_0[Q,P].
    \end{align*} 
    Using the adjoint operator lowers the order of a Hagedorn wavepacket. If $j\in \{1,\dots d\}$ then \begin{align*}
        A_j[Q,P]\phi_k[Q,P]=\sqrt{k_j}\phi_{k-e_j}[Q,P].
    \end{align*}
\end{definition}

Observe that if \(Q=\mathrm{Id}_d, P=i\mathrm{Id}_d\), then $\phi_0[Q,P]$ is the usual \(d\)-dimensional Gaussian, and the ladder operator is also the usual ladder operator in $d$ dimensions. Consequently, the wavepackets \(\phi_k[\mathrm{Id}_d,i\mathrm{Id}_d]\) are the usual product Hermite functions. More generally, if \(Q\in\R^{d\times d}, iP\in\R^{d\times d}\) with $Q$ and $P$ symmetric, then the Hagedorn wavepackets are rotated and scaled Hermite functions \cite{HagedornCite}. The strength of Hagedorn wavepackets is that one can pick any two complex matrices $Q,P$ satisfying (\ref{Lagrange}), and still inherit several properties from the Hermite functions. We collect the ones most important to us below. For the proofs, as well as further properties, we direct the reader to \cite{HagedornSurvey}.

\begin{lemma}
    Let \(k\in\N_0^d\) and let \(Q,P\) satisfy (\ref{Lagrange}). The \(k\)-th Hagedorn wavepacket can be written as \begin{align*}
        \phi_k[Q,P](t)=\frac{1}{\sqrt{2^{|k|}k!}}p_k(t)\phi_0[Q,P](t),
    \end{align*}
    where \(p_k\) is a polynomial of degree \(|k|\) satisfying the three-term recurrence relation 
    \[\left(p_{k+e_j}(t)\right)_{j=1}^d=2\sqrt{2\pi}(Q^{-1}t)p_k(t)-2Q^{-1}\overline{Q}\left(k_jp_{k-e_j}(t)\right)_{j=1}^d.\]
    The polynomial prefactor also satisfies the parity relation \begin{equation}\label{eq:parity}
        p_k(-t)=(-1)^{|k|}p_k(t).
    \end{equation}
\end{lemma}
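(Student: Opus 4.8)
The plan is to argue by induction on $|k|$, starting from the defining formula $\phi_k[Q,P]=\frac{1}{\sqrt{k!}}(A^\dagger)^k\phi_0[Q,P]$ and the explicit componentwise action of the ladder operator. The base case $k=0$ holds with $p_0=1$. For the inductive step, the decisive observation is that $\phi_0[Q,P]$ is Gaussian, so that $\frac{\partial}{\partial t_l}\phi_0[Q,P]=2\pi i(PQ^{-1}t)_l\,\phi_0[Q,P]$, where I use that $PQ^{-1}$ is symmetric (a consequence of \eqref{Lagrange}). Hence $A_j^\dagger$ sends any expression (polynomial)$\cdot\phi_0$ to another expression of the same shape, and since the components of $A^\dagger$ commute we have $\phi_{k+e_j}=\frac{1}{\sqrt{k_j+1}}A_j^\dagger\phi_k$. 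Tracking the normalization $\frac{1}{\sqrt{2^{|k|}k!}}$ through this relation collapses everything to the clean polynomial identity $p_{k+e_j}\phi_0=\sqrt{2}\,A_j^\dagger(p_k\phi_0)$, which is what I would then expand.

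The heart of the matter is to expand $\sqrt{2}\,A_j^\dagger(p_k\phi_0)/\phi_0$ and recognize the claimed recurrence. After the product rule, this splits into a part linear in $t$ times $p_k$, plus a part involving $\nabla p_k$. The $t$-linear part carries the matrix $P^*-Q^*PQ^{-1}$ acting on $t$; invoking the second relation of \eqref{Lagrange}, $Q^*P-P^*Q=2i\,\mathrm{Id}_d$, this reduces exactly to $2\sqrt{2\pi}(Q^{-1}t)_j\,p_k$. For the gradient part I would not try to relate $\nabla p_k$ to lower-order $p$'s by hand, but instead use the lowering relation $A_j\phi_k=\sqrt{k_j}\phi_{k-e_j}$. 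Computing $A_j(p_k\phi_0)$ the same way, the first relation of \eqref{Lagrange}, $Q^TP=P^TQ$, annihilates the $t$-linear part and leaves $A_j(p_k\phi_0)=\frac{1}{2\sqrt{\pi}}(Q^T\nabla p_k)_j\,\phi_0$; comparing with $A_j\phi_k=\sqrt{k_j}\phi_{k-e_j}$ yields $Q^T\nabla p_k=2\sqrt{2\pi}\,\bigl(k_j p_{k-e_j}\bigr)_{j=1}^d$, hence $\nabla p_k=2\sqrt{2\pi}\,(Q^T)^{-1}\bigl(k_j p_{k-e_j}\bigr)_{j=1}^d$. Substituting into the gradient part produces the correction term $-2(Q^{-1}\overline{Q})^T\bigl(k_j p_{k-e_j}\bigr)_{j=1}^d$.

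The remaining step, which I expect to be the most delicate, is reconciling the transpose: the target has $Q^{-1}\overline{Q}$, not $(Q^{-1}\overline{Q})^T$, so I must show $Q^{-1}\overline{Q}$ is symmetric. This is equivalent to $Q\overline{Q}^T=\overline{Q}Q^T$, i.e.\ to $QQ^*$ being a real matrix. Now $QQ^*$ is Hermitian, and its inverse equals $\mathrm{Im}(PQ^{-1})$ by the identity recorded just after \eqref{Lagrange}; since the imaginary part of a matrix is real, $(QQ^*)^{-1}$, and therefore $QQ^*$, is real, which gives the required symmetry and completes the recurrence. With the recurrence established, the degree and parity assertions follow by short inductions: the leading term of $p_{k+e_j}$ is $2\sqrt{2\pi}(Q^{-1}t)_j p_k$, which is nonzero of degree $|k|+1$ because $Q^{-1}$ is invertible and the polynomial ring is an integral domain; and the parity relation propagates because $(Q^{-1}t)_j$ is odd while each $p_{k-e_j}$ carries the opposite parity to $p_k$. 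The parity claim can alternatively be read off directly, since each $A_j^\dagger$ reverses parity and $\phi_0$ is even, so $p_k$ inherits parity $(-1)^{|k|}$.
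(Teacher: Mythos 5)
Your proof is correct. Note that the paper itself does not prove this lemma at all --- it states it and defers to \cite{HagedornSurvey} --- so there is no in-paper argument to compare against; your derivation is, however, essentially the standard ladder-operator proof found in the literature. All the computational pivots check out: the reduction to $p_{k+e_j}\phi_0=\sqrt{2}\,A_j^\dagger(p_k\phi_0)$ after tracking the normalization $(k+e_j)!=(k_j+1)k!$; the use of $\nabla\phi_0=2\pi i(PQ^{-1}t)\phi_0$, which requires the symmetry of $PQ^{-1}$ and gets it from $Q^TP=P^TQ$; the identity $P^*-Q^*PQ^{-1}=-2iQ^{-1}$ from the second relation in \eqref{Lagrange}, giving the term $2\sqrt{2\pi}(Q^{-1}t)_jp_k$; and the clean trick of extracting $Q^T\nabla p_k=2\sqrt{2\pi}\bigl(k_jp_{k-e_j}\bigr)_j$ from the lowering relation rather than differentiating $p_k$ directly (the constant $A_j(p_k\phi_0)=\sqrt{2}\,k_jp_{k-e_j}\phi_0$ is right). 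You also correctly identified the genuinely delicate point, namely that the raw computation produces $(Q^{-1}\overline{Q})^T=Q^*Q^{-T}$ rather than $Q^{-1}\overline{Q}$, and your resolution is sound: symmetry of $Q^{-1}\overline{Q}$ is equivalent to realness of $QQ^*$, which follows from the identity $\mathrm{Im}(PQ^{-1})=(QQ^*)^{-1}$ recorded in the paper (and itself a one-line consequence of \eqref{Lagrange}, multiplying $Q^*P-P^*Q=2i\,\mathrm{Id}_d$ by $Q^{-*}$ on the left and $Q^{-1}$ on the right). The degree claim via the nonvanishing top term $2\sqrt{2\pi}(Q^{-1}t)_j\cdot\mathrm{top}(p_k)$ in the integral domain $\C[t_1,\dots,t_d]$, and the parity claim read off directly from the fact that each $A_j^\dagger$ reverses parity while $\phi_0$ is even, are both correct; the latter observation is in fact slicker than running parity through the recurrence.
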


\begin{remark}
    In \cite{Hagedorn1} the authors consider Hagedorn wavepackets with two additional parameters $q$ and $p$, corresponding to the center of the wavepacket $\phi_0.$ One can similarly define off-centered wavepackets in our setup by using the ladder operator on the wavepacket \(\pi(z)\phi_0[Q,P]\). However, in our setup the ladder operator $A^\dagger$ commutes with all time-frequency shifts. Therefore every statement above may be translated to off-centered wavepackets simply by applying a time-frequency shift. From now on we will therefore only consider wavepackets centered at $0$.
\end{remark}

\subsection{Some special two-dimensional wavepackets}\label{HagedornEx}

As seen in \cite{Hagedorn3}, the Hagedorn wavepackets in one dimension are simply rotated and scaled Hermite functions. One might therefore expect this to also be the case in higher dimensions, but this is not true. Hagedorn wavepackets only directly relate to the Hermite functions in the special cases \(Q,iP\in\R^{d\times d}\). However, we can still find explicit formulas even when the wavepackets aren't directly related to the Hermite functions. As an example, consider \(d=2\). We will look at the wavepackets with \(Q,iP\notin\R^{2\times 2}\) that are the closest to the Hermite functions. As noted in \cite{Hagedorn1}, whereas the generalized Gaussian \(\phi_0[Q,P]\) depends on both \(Q\) and \(P\), the polynomial factor of the wavepackets only depends on the matrix \(Q^{-1}\overline{Q}=M\). Following \cite{Hagedorn3}, it can be shown that if \begin{align*}
    M=\begin{pmatrix}
        0 & e^{i\theta}\\ e^{i\theta} & 0
    \end{pmatrix},\quad \theta\in[0,2\pi],
\end{align*}
then the polynomial prefactors of the higher-order wavepackets are \begin{align}\label{0Dprefactor}
    \Tilde{p}_{n}(t)=\begin{cases}
        (-1)^{n_2}e^{i\theta n_2} (n_2)!t_1^{n_1-n_2}L_{n_2}^{n_1-n_2}\left(t_1t_2e^{-i\theta}\right), \; n_1\geq n_2, \cr
        (-1)^{n_1}e^{i\theta n_1} (n_1)!t_2^{n_2-n_1}L_{n_1}^{n_2-n_1}\left(t_1t_2e^{-i\theta}\right), \; n_2\geq n_1.
    \end{cases}
\end{align}
These polynomials are much more in line with the complex Hermite polynomials than the regular product Hermite polynomials.
In this case the Hagedorn wavepackets can now be written explicitly as \begin{align}\label{explicitly}
    \phi_n[Q,P](t)=\frac{1}{\sqrt{n_1!n_2!}}\Tilde{p}_n\left(2\sqrt{\pi}Q^{-1}\begin{pmatrix}
        t_1\\ t_2
    \end{pmatrix}\right)\phi_0[Q,P](t).
\end{align}
The matrices \(Q\) that produces wavepackets like this are of the form \begin{align*}
    Q=\begin{pmatrix}
        q_1 & q_1e^{-i\theta}\\ q_2 e^{-i\theta} & q_2
    \end{pmatrix},\quad q_1,q_2\neq 0.
\end{align*}
The corresponding \(P\) can be found using Equation \eqref{Lagrange}, but note that there are several \(P\in\C^{d\times d}\) that makes \(Q,P\) a normalized Lagrangian frame. In general we will call the Hagedorn wavepackets with \(Q^{-1}\overline{Q}=\begin{pmatrix}
    0 & e^{i\theta} \\ e^{i\theta} & 0
\end{pmatrix}\) \textit{zero-diagonal wavepackets}. As a very explicit example, let us chose \(Q=\begin{pmatrix}
    1 & \frac{1-i}{\sqrt{2}} \\ \frac{1-i}{\sqrt{2}} & 1
\end{pmatrix}\), that is \(q_1=q_2=1, \theta=\frac{\pi}{4}\). Then one possible \(P\) is \begin{align*}
    P=\begin{pmatrix}
        i-1 & 0 \\
        0 & i-1
    \end{pmatrix}.
    \end{align*}
The properties of zero-diagonal wavepackets will in general depend on both \(Q\) and \(P\), but note that their absolute value only depends on \(Q\). Firstly, the covariance of the generalized Gaussian \(\phi_0[Q,P]\) is \begin{align*}
    QQ^*=\begin{pmatrix}
        |q_1|^2+|q_2|^2 & 2\overline{q_1q_2}\cos(\theta) \\2\overline{q_1q_2}\cos(\theta) &|q_1|^2+|q_2|^2
    \end{pmatrix}.
\end{align*}
The polynomial prefactors are skewed by a factor \(Q^{-1}\), which equals \begin{align*}
    Q^{-1}=\frac{1}{q_1q_2(1-e^{-i\theta})}\begin{pmatrix}
        q_2 & -q_1e^{-i\theta} \\ -q_2e^{-i\theta} & q_1.
            \end{pmatrix}
\end{align*}
Thus, independently of \(P\), we may write the absolute values explicitly using Equation \eqref{0Dprefactor} and \eqref{explicitly}. If we also know \(P\) we get a fully explicit description of \(\phi_k[Q,P]\). Returning to the case where \(Q=\begin{pmatrix}
    1 & \frac{1-i}{\sqrt{2}} \\ \frac{1-i}{\sqrt{2}} & 1
\end{pmatrix}, P=\begin{pmatrix}
        i-1 & 0 \\
        0 & i-1
    \end{pmatrix}\)  we have \begin{align*}
            \left(QQ^*\right)^{-1}=\begin{pmatrix}
        1 & -\frac{1}{\sqrt{2}} \\ -\frac{1}{\sqrt{2}} & 1
    \end{pmatrix}
    \quad\text{and}\quad 
        Q^{-1}=\frac{1}{2}\begin{pmatrix}
            1-i & \sqrt{2}i \\ \sqrt{2}i & 1-i
        \end{pmatrix}.
    \end{align*}
    Using \eqref{explicitly}, the wavepackets are 
    \begin{multline}\label{0Dformula}
    \phi_{n_1,n_2}[Q,P](t_1,t_2)= \\\begin{cases}
            2^{1/4}\left(-\frac{1+i}{\sqrt{2}}\right)^{n_2}\frac{\sqrt{\pi}^{n_1-n_2}\sqrt{n_2!}}{\sqrt{n_1!}}\left(t_1+i(\sqrt{2}t_2-t_1)\right)^{n_1-n_2}L_{n_2}^{n_1-n_2}\left(2\sqrt{\pi}\xi\right)e^{-\pi\xi} e^{-\frac{i}{\sqrt{2}}\pi t_1t_2},n_1\geq n_2,
            \cr
            2^{1/4}\left(-\frac{1+i}{\sqrt{2}}\right)^{n_1}\frac{\sqrt{\pi}^{n_2-n_1}\sqrt{n_1!}}{\sqrt{n_2!}}\left(t_2+i(\sqrt{2}t_1-t_2)\right)^{n_2-n_1}L_{n_1}^{n_2-n_1}\left(2\sqrt{\pi}\xi\right)e^{-\pi\xi}e^{-\frac{i}{\sqrt{2}}\pi t_1t_2}, n_2\geq n_1,
        \end{cases}
\end{multline}
where \(\xi(t_1,t_2)=t_1^2-\sqrt{2}t_1t_2+t_2^2=\langle \left(QQ^*\right)^{-1}t,t\rangle\).)

\subsection{Hagedorn wavepackets in phase space}
In order to examine localization operators with a Hagedorn wavepacket window, we need information about the STFT of Hagedorn wavepackets. Firstly, the cross-Wigner distribution of two Hagedorn wavepackets was treated in \cite{Hagedorn1}, where the following formula was found. We have slightly modified the formula to fit with our conventions.
\begin{lemma}\label{Wigner}
    Let $k,n\in\mathbb{N}_0^d,$ let \(Q,P\) satisfy \eqref{Lagrange}, and let $T$ be the corresponding symplectic matrix. The cross-Wigner distribution of \(\phi_n[Q,P]\) and \(\phi_k[Q,P]\), denoted by \(W(n,k)\), is 
    \begin{align*}
        W(n,k)(x,\omega)=2^{d} (-1)^{|k|}e^{-2\pi|\zeta|^2}\prod_{j=1}^d\overline{H_{n_j,k_j}(\sqrt{2}\zeta_j)},
    \end{align*}
    where \(\zeta=\zeta(x,\omega)=-iP^Tx+iQ^T\omega=T^{-1}\begin{pmatrix}
        x \\ \omega
    \end{pmatrix},\) and \(H_{n_j,k_j}\) is the corresponding complex Hermite polynomial.
\end{lemma}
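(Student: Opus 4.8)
The plan is to deduce the formula from the symplectic covariance of the Wigner distribution, reducing everything to the already-understood case of the product Hermite functions. The key observation is that, by \eqref{Symplectic}, the frame $(Q,P)$ corresponds to a symplectic matrix $T$, and the Hagedorn wavepackets should be nothing but the metaplectic images of the product Hermite functions: writing $\mu(T)$ for a metaplectic operator lying over $T$, I expect
\[
    \phi_k[Q,P]=\mu(T)\phi_k,
\]
where $\phi_k$ is the standard product Hermite function (the case $Q=\mathrm{Id}_d,\,P=i\mathrm{Id}_d$). Granting this, the covariance identity $W(\mu(T)f,\mu(T)g)(z)=W(f,g)(T^{-1}z)$ for the cross-Wigner distribution gives at once
\[
    W(n,k)(z)=W(\phi_n,\phi_k)\bigl(T^{-1}z\bigr),
\]
and since $\zeta=\zeta(x,\omega)$ is by definition the complex form of $T^{-1}(x,\omega)^{T}$, the problem collapses to computing the cross-Wigner distribution of ordinary Hermite functions evaluated at $\zeta$.

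\textbf{Justifying the metaplectic identification.} First I would check the ground state: metaplectic operators send Gaussians to Gaussians, and a direct comparison of the covariance data (recall that $QQ^{*}$ and $PP^{*}$ are precisely the position and momentum covariances of $\phi_0[Q,P]$) pins down $\mu(T)\phi_0=\phi_0[Q,P]$ up to a unimodular constant, fixed by the $\det(Q)^{-1/2}$ normalization. Next I would verify that $\mu(T)$ intertwines the standard ladder operators with the Hagedorn ones, i.e. $\mu(T)A^\dagger\mu(T)^{-1}=A^\dagger[Q,P]$; this is forced because metaplectic conjugation acts on the quadrature operators $t_k,\partial_{t_k}$ by the linear symplectic substitution encoded in $T$, and $A^\dagger[Q,P]$ is exactly the combination of $\mathbf{t}$ and $\nabla$ dictated by $P^{*},Q^{*}$. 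Applying $\mu(T)$ to $\phi_k=\tfrac{1}{\sqrt{k!}}(A^\dagger)^k\phi_0$ then yields the displayed identification.

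\textbf{The product-Hermite computation.} It remains to evaluate $W(\phi_n,\phi_k)$ for product Hermite functions. Because both the Hermite functions and the Wigner kernel factor across coordinates, this splits as $\prod_{j=1}^d W(\phi_{n_j},\phi_{k_j})$, so it suffices to treat $d=1$. Here I would combine three facts from the preliminaries: the relation $W(f,g)(x,\omega)=2e^{4\pi i x\omega}V_{\check g}f(2x,2\omega)$, the parity $\check\phi_k=(-1)^{k}\phi_k$ of the Hermite functions, and the Laguerre connection $V_{\phi_k}\phi_n(z)=e^{-i\pi x\omega}e^{-\frac\pi2(x^2+\omega^2)}\overline{H_{n,k}(z)}$. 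Chaining these, the oscillatory prefactors cancel and one obtains $W(\phi_n,\phi_k)(x,\omega)=2(-1)^{k}e^{-2\pi(x^2+\omega^2)}\overline{H_{n,k}(c(x+i\omega))}$ for an explicit scalar $c$; substituting $x+i\omega\mapsto\zeta$, taking the product over $j$, and collecting the constants $2^d$ and $(-1)^{|k|}$ delivers the claim.

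\textbf{Main obstacle.} The conceptual steps are routine, so the real difficulty is bookkeeping the constants and the precise dilation of the argument of $H_{n,k}$: the factor $2^d$, the sign $(-1)^{|k|}$, and above all the scaling $c$ inside $\overline{H_{n,k}(\sqrt2\,\zeta_j)}$ must be tracked through the $2x,2\omega$ dilation in the Wigner--STFT relation together with the normalization of the complex Hermite polynomials. Since the complex Hermite normalization is exactly where an error slipped into the literature (cf. the remark preceding Lemma \ref{AbreuSpeckbacher}), I would calibrate $c$ against the low-order cases $n=k=0$ and $(n,k)=(1,0)$ before trusting the general formula. As a self-contained alternative that avoids the metaplectic machinery, one can instead compute $W(0,0)$ directly as a Gaussian in $\zeta$ and then show that the Hagedorn ladder operators $A^\dagger[Q,P]$ and $A[Q,P]$ act on $W(n,k)$, viewed as a function of $\zeta$, as the creation and annihilation operators for the complex Hermite polynomials, generating the product formula by induction on $n$ and $k$.
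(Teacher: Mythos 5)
Your approach is necessarily different from the paper's, because the paper does not prove Lemma \ref{Wigner} at all: it quotes the formula from \cite{Hagedorn1}, remarking only that it has been translated into complex-Hermite-polynomial notation and a different Wigner convention. Your route --- identifying $\phi_k[Q,P]=\mu(T)\phi_k$ via the ground state and the intertwining $\mu(T)A^\dagger\mu(T)^{-1}=A^\dagger[Q,P]$, then invoking the symplectic covariance $W(\mu(T)f,\mu(T)g)=W(f,g)\circ T^{-1}$ and tensorizing a one-dimensional Hermite computation --- is sound, and it is the standard conceptual explanation for why the argument $\zeta=T^{-1}(x,\omega)^T$ appears at all. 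Two minor refinements: the covariances $QQ^*$ and $PP^*$ alone do not determine the chirp $\mathrm{Re}(PQ^{-1})$, so the cleaner way to fix the ground state is the Siegel upper half-space action of metaplectic operators on Gaussians, $i\,\mathrm{Id}\mapsto(C+iD)(A+iB)^{-1}=PQ^{-1}$; and the residual metaplectic sign/phase ambiguity (present in $\det(Q)^{-1/2}$ as well) is harmless for the stated lemma, since the same unimodular factor multiplies both slots of the sesquilinear form $W(\cdot,\cdot)$ and cancels --- a point worth making explicit rather than claiming the phase is ``fixed.''

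Your caution about the dilation constant $c$ is well placed, and carrying out your own proposed calibration gives $c=2$, not $\sqrt{2}$. Chaining the paper's stated identities --- $W(f,g)(x,\omega)=2^de^{4\pi i\langle x,\omega\rangle}V_{\check g}f(2x,2\omega)$, the parity $\check\phi_k=(-1)^{k}\phi_k$, and the Laguerre connection --- yields in one dimension $W(\phi_n,\phi_k)(x,\omega)=2(-1)^ke^{-2\pi|z|^2}\overline{H_{n,k}(2z)}$; a direct check at $(n,k)=(1,0)$ gives $W(\phi_1,\phi_0)(z)=4\sqrt{\pi}\,\bar z\,e^{-2\pi|z|^2}$, which matches $\overline{H_{1,0}(2z)}=2\sqrt{\pi}\bar z$ and not $\overline{H_{1,0}(\sqrt{2}z)}$. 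The printed $\sqrt{2}\zeta_j$ is moreover internally inconsistent with Proposition \ref{HagedornSTFT}: feeding the lemma as printed through the Wigner--STFT relation and the parity relation would produce $\overline{H_{n_j,k_j}(\zeta_j/\sqrt{2})}$ there, whereas Proposition \ref{HagedornSTFT} with argument $\zeta_j$ is the version that correctly reduces to the Laguerre connection at $T=\mathrm{Id}$. So your completed argument proves the lemma with $2\zeta_j$ in place of $\sqrt{2}\zeta_j$; the discrepancy is a scaling typo in the statement (plausibly inherited from the $\varepsilon$-conventions of \cite{Hagedorn1}), and your plan to calibrate against low-order cases before trusting the general formula would have caught exactly this.
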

This, together with Equation~\eqref{WignerIsSTFT} gives a formula for the STFT of a Hagedorn wavepacket with a Hagedorn wavepacket window.
\begin{prop}\label{HagedornSTFT}
        Let $k,n\in\mathbb{N}_0^d$, \(Q,P\) and $T$ be as in Lemma \ref{Wigner}. If we write $V_{k}n(x,\omega)=V_{\phi_k[Q,P]}\phi_n[Q,P](x,\omega)$, then we have \begin{align}\label{HagedornSTFTFormula}
        V_{k}n(x,\omega)=e^{-\pi i \langle x,\omega\rangle}e^{-\frac{\pi}{2}|\zeta|^2}\prod_{j=1}^d\overline{H_{n_j,k_j}(\zeta_j)},
    \end{align}
    where also \(\zeta\) is as in Lemma \ref{Wigner}.
\end{prop}
\begin{proof}
    Invoking Lemma \ref{Wigner}, the identity~\eqref{WignerIsSTFT} and the parity relation of Hagedorn wavepackets~\eqref{eq:parity} gives the result.
\end{proof}
As one would expect, the choice \(Q=\mathrm{Id}_d, P=i\mathrm{Id}_d\) recovers the Laguerre connection. Note also that up to the phase factor \(e^{-\pi i \langle x, \omega\rangle}\), \(V_kn(x,\omega)\) is just the STFT of the two Hermite functions $\phi_n$ and $\phi_k$, precomposed with the symplectic matrix \(T^{-1}\): \[|V_{k}n(x,\omega)|=\left|V_{\phi_k}\phi_n\left(T^{-1}\begin{pmatrix}
    x \\ \omega
\end{pmatrix}\right)\right|.\]
In light of this relation one should think of Hagedorn wavepackets as an explicit realization of the symplectic covariance property of the STFT~\cite{deGossonBook}.

\section{A generalized Daubechies' theorem for localization operators}\label{Sec:Daubechies}

In this section we present our main findings for classical localization operators. Our main tool will be the so-called \textit{double orthogonality relations}. It is well-known, see for example \cite{WhiteNoise,SeipRepFormula}, that a collection of functions \(\{\psi_n\}_{n\in\N_0}\) are the eigenfunctions of a localization operator \(A_F^g\) if and only if the short-time Fourier transforms \(\{V_g\psi_n\}_{n\in\N_0}\) are doubly orthogonal with respect to \(F\). That is, orthogonal both with respect to the measure \(dz\) and the measure \(F(z)\;dz\). We give the full statement of the characterization, but as we will see, this characterization is a special case of a more general characterization for mixed-state localization operators (Proposition~\ref{OperatorOrth}), and we therefore postpone the proof until the next section. Even so, we mention that the proof relies on using the spectral theorem on \(A_F^g\), which requires compactness. Compact localization operators have been characterized by Fernández and Galbis \cite[Prop. 3.6]{FG} in the following result.
\begin{lemma}[Fernández-Galbis]
    Let \(G\in \mathscr{S}_0'(\R^{2d})\) and \(g\in L^2(\R^d)\). The following are equivalent.
    \begin{itemize}
        \item \(A_G^g\) is compact on \(L^2(\R^d)\).
        \item There is a \(\Phi\in \mathscr{S}(\R^{2d})\setminus\{0\}\) such that for all \(R>0\), \[\lim_{|x|\to\infty}\sup_{|\omega|<R}|V_{\Phi}G(x,\omega)|=0.\]
    \end{itemize}
\end{lemma}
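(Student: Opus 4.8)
The plan is to prove the two implications separately, using the factorization $A_G^g = V_g^* M_G V_g$ together with the covariance of localization operators under time-frequency shifts. The starting point is the identity $\pi(z) A_G^g \pi(z)^* = A_{G(\cdot - z)}^g$, i.e. $\alpha_z(A_G^g) = A_{T_z G}^g$, which follows at once from $A_G^g = G \star (g \otimes g)$ and the translation-covariance of the quantum convolution; it says that rigidly translating the mask in phase space is implemented by conjugation with $\pi(z)$, where $z \in \R^{2d}$. I would also record the elementary pairing $\langle A_H^g u, v\rangle = \int_{\R^{2d}} H(w)\, V_g u(w)\, \overline{V_g v(w)}\; dw$, valid for $u,v \in S_0(\R^d)$ (so that $V_g u\,\overline{V_g v} \in S_0(\R^{2d})$), which lets me move freely between the operator and its mask tested against products of STFTs.

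For the direction ``STFT condition $\Rightarrow$ compact'', I would read the hypothesis as the statement that $G$ lies in the vanishing Wiener amalgam space whose local component controls the modulation variable $\omega$ and whose global component is $C_0$ in the translation variable $x$. On this space two facts are available: first, a symbol in the non-vanishing amalgam $W(\mathcal{F}L^1, L^\infty)$ produces a bounded operator, giving a norm estimate $\|A_H^g\|_{\mathrm{op}} \lesssim \|V_\Phi H\|_{W(\mathcal{F}L^1,L^\infty)}$; second, masks compactly supported in $x$ (indeed any $H \in L^1(\R^{2d})$) give trace-class, hence compact, operators by Young's inequality for quantum convolutions. The vanishing hypothesis lets me truncate $G$ to $G_n$ whose global $x$-support is compact while the tail $G - G_n$ has arbitrarily small amalgam norm; the norm estimate then yields $A_{G_n}^g \to A_G^g$ in operator norm, and a norm limit of compact operators is compact.

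For the converse I would argue by contraposition via an escape-to-infinity argument. If the STFT condition fails there are $\varepsilon, R > 0$ and points $z_n = (x_n, \omega_n)$ with $x_n \in \R^{2d}$, $|x_n| \to \infty$, $|\omega_n| < R$ and $|V_\Phi G(z_n)| \geq \varepsilon$; passing to a subsequence I arrange $\omega_n \to \omega_\infty$. Consider the translated masks $G_n = T_{-x_n} G$, which are bounded in $S_0'(\R^{2d})$ since translation is isometric on $S_0$, so by Banach--Alaoglu a subsequence converges weak-$*$ to some $G_\infty$; as $V_\Phi$ is weak-$*$ continuous and $|V_\Phi G_n|$ stays $\geq \varepsilon$ at the points $(0,\omega_n) \to (0,\omega_\infty)$, an equicontinuity step gives $G_\infty \neq 0$. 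On the other hand, if $A_G^g$ were compact the covariance identity gives $A_{G_n}^g = \pi(x_n)^* A_G^g \pi(x_n) \to 0$ in the weak operator topology, because $\pi(x_n) u \to 0$ weakly when $|x_n|\to\infty$ and $A_G^g$ is compact. But the pairing formula shows $G \mapsto A_G^g$ is weak-$*$-to-weak-operator continuous, so $A_{G_n}^g \to A_{G_\infty}^g$, forcing $A_{G_\infty}^g = 0$. Finally $G \mapsto A_G^g$ is injective for $g \neq 0$: the equation $A_H^g = 0$ reads $H * W(g) = 0$, and the Fourier transform of $W(g)$, the ambiguity function of $g$, vanishes only on a null set, so $H = 0$. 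Thus $G_\infty = 0$, a contradiction.

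The main obstacle is the sufficiency direction, specifically pinning down the correct amalgam norm and proving the quantitative bound $\|A_H^g\|_{\mathrm{op}} \lesssim \|V_\Phi H\|_{W(\mathcal{F}L^1,L^\infty)}$ for a merely $L^2$ window $g$ and a distributional mask $H \in S_0'$, where the operator need not be trace class and compactness must be extracted purely from norm approximation. A secondary subtlety is justifying injectivity of $G \mapsto A_G^g$ in the converse, which hinges on the negligibility of the zero set of the ambiguity function $V_g g$.
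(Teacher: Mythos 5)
First, a point of comparison: the paper does not prove this lemma at all --- it is imported verbatim as \cite[Prop.~3.6]{FG} --- so your attempt can only be measured against what a correct proof requires, and it has two genuine gaps, one in each direction.

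The sufficiency direction fails at the very first move: the hypothesis does \emph{not} say that $G$ lies in (a vanishing subspace of) $W(\mathcal{F}L^1,L^\infty)$. Take $G=\delta$: then $V_\Phi\delta(x,\omega)=\overline{\Phi(-x)}$ is independent of $\omega$, so the strip-vanishing condition holds for every $R$, yet $\sup_x\int_{\R^{2d}}|V_\Phi\delta(x,\omega)|\,d\omega=\infty$, so the amalgam norm you want to estimate by is infinite --- while $A_\delta^g=g\otimes g$ is rank one, hence compact, so the lemma genuinely covers symbols your mechanism cannot reach. Even for symbols with finite amalgam norm the truncation step breaks: the chirp $G(z)=e^{\pi i|z|^2}$ satisfies the hypothesis (with Gaussian $\Phi$ one computes $|V_\Phi G(x,\omega)|\simeq c\,e^{-c'|\omega-x|^2}$, concentrated along $\omega\approx x$, which leaves every strip $|\omega|<R$ as $|x|\to\infty$) and lies in $W(\mathcal{F}L^1,L^\infty)$, but the tails $G\chi_{\{|x|>n\}}$ have amalgam norm bounded \emph{below}, because the hypothesis controls nothing for $|\omega|>R$ and the STFT mass simply rides the diagonal. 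So $\|V_\Phi(G-G_n)\|_{W(\mathcal{F}L^1,L^\infty)}\not\to 0$ and the claimed operator-norm convergence $A_{G_n}^g\to A_G^g$ is not established; this is exactly why the actual Fern\'andez--Galbis argument is more involved than a tail estimate in a fixed amalgam norm.

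In the necessity direction your skeleton (covariance $\alpha_z(A_G^g)=A_{T_zG}^g$, weak-$*$ compactness of the translated masks, equicontinuity giving $G_\infty\neq 0$, and WOT decay of $\pi(x_n)^*A_G^g\pi(x_n)$ for compact $A_G^g$) is sound, but the closing injectivity claim is false. It is not true that $V_gg$ vanishes only on a null set for general $0\neq g\in L^2$: if $g=\chi_{[0,1]}$ then $V_gg(x,\omega)=0$ for all $|x|>1$, an \emph{open} set, and choosing $\mathcal{F}_\sigma H$ to be any nonzero distribution (even a Schwartz bump) supported there gives $H\neq 0$ with $\mathcal{F}_\sigma H\cdot\mathcal{F}_W(g\otimes g)=0$, i.e.\ $A_H^g=0$. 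Moreover, even when the zero set of $V_gg$ is null, $\mathcal{F}_\sigma H$ is a distribution and may be supported inside it (a measure on a curve of zeros, say), so $H*W(g)=0$ still does not force $H=0$. This gap is not cosmetic: taking $H(z)=e^{2\pi i\langle Jz_0,z\rangle}$ with $V_gg(z_0)=0$, $|z_0|$ large, yields $A_H^g=0$ (compact) while the strip-vanishing condition fails for $H$, which shows that for a fixed degenerate window the two bullets are not equivalent as literally stated; Fern\'andez--Galbis's formulation carries hypotheses that exclude exactly this degeneracy, and your escape-to-infinity argument cannot close without some nondegeneracy assumption on the ambiguity function of $g$.
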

We call the distributions on \(\R^{2d}\) satisfying the second condition above the \textit{Fernández-Galbis class}, and denote it by \(FG(\R^{2d})\). Let us now state the double orthogonality criterion for localization operators. As mentioned, the proof is contained in Proposition~\ref{OperatorOrth}.
\begin{prop}\label{GeneralDblOrth}
    Let \(g\in L^2(\R^d)\), \(F\in FG(\R^{2d})\) and \(\{\psi_n\}_{n\in\N_0^d}\subset L^2(\R^d)\). Then \(\{\psi_n\}_{n\in\N_0^d}\) are the eigenfunctions of the localization operator \(A_{F}^g\) if and only if the STFTs \(\{V_g\psi_n\}_{n\in\N_0^d}\) form a complete subset of \(V_g(L^2)\) and are doubly orthogonal. That is, the identities
    \begin{align}\label{Dbl1}
        \int_{\R^{2d}} V_g\psi_n(z)\overline{V_g\psi_m}(z)\;dz=b_n\delta_{n,m}
    \end{align}
    and
    \begin{align}\label{Dbl2}
        \int_{\R^{2d}}F(z) V_g\psi_n(z)\overline{V_g\psi_m}(z)\;dz=c_n\delta_{n,m}
    \end{align}
    hold. In this case, the \(n\)-th eigenvalue is \(c_n\).
\end{prop}

Even though the Fernández-Galbis class contains all masks that define compact localization operators, there are several masks not contained in this class that have been studied using Daubechies' classical theorem. For instance, the complement of the disc centered at \(0\) with radius \(R\), \(D_R^c\), is not thin at infinity, but the Hermite functions are nevertheless the eigenfunctions of \(A_{D_R^c}^{\phi_0}\). We will therefore consider a simple extension of Fernández-Galbis' class where, while the operators may no longer be compact, we can still invoke the Courant-Fischer theorem, thereby obtaining eigenfunctions.
\begin{definition}
    We define the \textit{extended Fernández-Galbis class} by \begin{align*}
        EFG(\R^{2d})=\{F=c+G\colon c\in\C,G\in FG(\R^{2d})\}.
    \end{align*}
\end{definition}
By definition, a mask \(F\in EFG\) will give \[A_{F}^g=c\cdot A_1^g+A_G^g=c\cdot\mathrm{Id}_{L^2(\R^d)}+A_G^g.\]
That is, masks in EFG give scalings of the identity operator, perturbed by a compact localization operator. Compact perturbations of the identity will have eigenfunctions if and only if the compact operator does, and since by assumption \(G\in FG(\R^{2d})\), we are guaranteed that masks in \(EFG(\R^{2d})\) yield localization operators with eigenfunctions. This can also be easily seen by considering the localization problem, as we have \begin{align*}
    \int_{\R^{2d}} F(z) |V_gf(z)|^2\;dz &=\int_{\R^{2d}}( c+G(z)) |V_gf(z)|^2\;dz\\ &=c\int_{\R^{2d}} |V_gf(z)|^2\;dz+\int_{\R^{2d}} G(z) |V_gf(z)|^2\;dz,
\end{align*}
which, by Moyal's identity, equals \begin{align*}
    c\|f\|^2\|g\|^2+\int_{\R^{2d}} G(z) |V_gf(z)|^2\;dz=c\|g\|^2+\int_{\R^{2d}} G(z) |V_gf(z)|^2\;dz.
\end{align*}
So the functions that solve the localization problem for $F\in EFG(\R^{2d})$ are the same as the ones that solve the localization problem for \(G\in FG(\R^{2d})\). The \(n\)-th eigenvalues for \(A_F^g\) is the \(n\)-th eigenvalue of \(A_G^g\), plus \(c\|g\|^2\).\newline

There is one final property we need to impose on \(F\), namely polyradiality. Recall that a tempered distribution \(\varphi\) on \(\R^{2d}\) is said to be polyradial if for any Schwartz function \(f\in\mathscr{S}(\R^{2d})\) and any component-wise rotation, that is, a linear transformation \(A:\R^{2d}\rightarrow\R^{2d}\) on the form \(A(z_1,z_2,\dots, z_d)=(e^{i\theta_1}z_1,e^{i\theta_2}z_2,\dots, e^{i\theta_d}z_d)\), we have \begin{align*}
    \langle \varphi, f\rangle=\langle \varphi, f\circ A\rangle.
\end{align*}
Following \cite{Grafakos}, one can show that all polyradial distributions on \(\R^{2d}\) admit a unique representation on \(V^d\). With the necessary tools in hand, let us now state the first extension of Daubechies' theorem. The heuristic idea is this: If \(F\) is a polyradial tempered distribution, then if \(g\) is a Hermite function, so are all of the eigenfunctions, and the eigenvalues can be found by a weighted integral in absolute space.

 \begin{prop}[Generalized Daubechies' theorem]\label{Prop:WeightDaub}
     Let \(F\in EFG(\R^{2d})\) be polyradial, let \(F_0\) be its representation in \(V^d\) and let $k\in\N_0^d$. Then the eigenfunctions of the localization operator \(A_F^{\phi_k}\) are the product Hermite functions \(\{\phi_n\}_{n\in\N_0^d}\), and the \(n\)-th eigenvalue is \begin{align}\label{eq:eigFormula}
        c_{n,k}(F)=\frac{k!}{n!}\int_{V^d} u^{n-k}e^{-u} \left(\prod_{j=1}^dL_{k_j}^{n_j-k_j}(u_i)\right)^2 F_0\left(\sqrt{\frac{u}{\pi}}\right)\; du.
         \end{align}
 \end{prop}

\begin{proof}
    We must verify the double orthogonality conditions. Orthogonality with respect to Lebesgue measure follows from Moyal's identity. 
    To verify orthogonality with respect to \(F(z)\;dz\) note first that by using the Laguerre connection in each variable, as well as the decomposition from Equation (\ref{Decomp}), of \(V_{\phi_k}\phi_n\) we have that \begin{align}\label{STFTFormula}
        V_{\phi_k}\phi_n(z)=\prod_{j=1}^d \rho_{n_j,k_j}(r_j)e^{-i(n_j-k_j)\theta_j}e^{-i\pi\langle x_j,\omega_j\rangle}.
    \end{align} Inserting this into formula into the orthogonality condition~\eqref{Dbl2} we get \begin{align*}
        \int_{\R^{2d}}F(z)V_{\phi_k}\phi_n(z)\overline{V_{\phi_k}\phi_m(z)}\;dz &=\int_{\R^{2d}}F_0(r)\prod_{j=1}^d \rho_{n_j,k_j}(r_j)e^{-i(n_j-k_j)\theta_j}\rho_{m_j,k_j}(r_j)e^{i(m_j-k_j)\theta_j} \;dz \\
        &=\int_{\R^{2d}}F_0(r)\prod_{j=1}^d \rho_{n_j,k_j}(r_j)\rho_{m_j,k_j}(r_j)e^{i(m_j-n_j)\theta_j} \;dz.
    \end{align*}
    After a change to polycylindrical coordinates, this integral equals \begin{align*}
        &\int_{V^d}F_0(r)\prod_{j=1}^d \rho_{n_j,k_j}(r_j)\rho_{m_j,k_j}(r_j) r_j \;dr\cdot\int_{\T^d}\prod_{j=1}^d e^{i(m_j-n_j)\theta_j} \;d\theta \\
        &=\int_{V^d}F_0(r)\prod_{j=1}^d \rho_{n_j,k_j}(r_j)\rho_{m_j,k_j}(r_j) r_j \;dr\cdot (2\pi)^d \prod_{j=1}^d \delta_{n_j,m_j} \\
        &= (2\pi)^d \int_{V^d}F_0(r)\prod_{j=1}^d \left(\rho_{n_j,k_j}(r_j)\right)^2 r_j \;dr\cdot\delta_{n,m}.
    \end{align*}
    So we have verified~\eqref{Dbl2} with respect to \(F(z)\;dz\). To get the expression for the constant we insert the definition of \(\rho_{n_j,k_j}\) from Equation (\ref{Decomp}) and do the change of variables \(u=\pi r^2\):
 \begin{align*}
    c_{n,k}(F)&=(2\pi)^d\frac{k!}{n!}\pi^{n-k}\int_{V^d} r^{2n-2k+1}e^{-\pi |r|^2} \left(\prod_{j=1}^dL_{k_j}^{n_j-k_j}(\pi r_j^2)\right)^2 F_0(r)\; dr\\
    &=\frac{k!}{n!}\int_{V^d} u^{n-k}e^{-u} \left(\prod_{j=1}^dL_{k_j}^{n_j-k_j}(u_i)\right)^2 F_0\left(\sqrt{\frac{u}{\pi}}\right)\; du.
         \end{align*}
\end{proof}

If \(F\) is a distribution, the eigenvalue expression~\eqref{eq:eigFormula} is interpreted distributionally, and if \(n=0\) then Proposition~\ref{Prop:WeightDaub} recovers Daubechies' classical multivariate theorem. By covariance we can also extend this result to Hagedorn wavepackets.

\begin{corollary}[Daubechies' theorem for Hagedorn wavepackets]\label{HWPDblOrth}
    Let \(Q,P\) satisfy~\eqref{Lagrange}, and let $T$ be the corresponding symplectic matrix. Let \(F\in EFG(\R^{2d})\) be polyradial, and let \(F_0\) be its representation in \(V^d\). Let $k\in\N_0^d$, and consider the localization operator \(A_{F\circ T^{-1}}^{\phi_k[Q,P]}\). Its eigenfunctions are the Hagedorn wavepackets \(\{\phi_n[Q,P]\}_{n\in\N_0^d}\), and the \(n\)-th eigenvalue is the same as for \(A_F^{\phi_k}\), and therefore given by~\eqref{eq:eigFormula}.
\end{corollary}
\begin{proof}
    We must verify the double orthogonality conditions. Equation~\eqref{Dbl1} follows from Moyal's identity. For Equation~\eqref{Dbl2} we have from Proposition \ref{HagedornSTFT},  \begin{align*}
        \int_{\R^{2d}} F(T^{-1}z)V_{k}{n}(z)\overline{V_{k}{m}(z)}\;dz =\int_{\R^{2d}} F(T^{-1}z)  V_{\phi_k}\phi_n(T^{-1}(z))\overline{V_{\phi_k}\phi_m(T^{-1}(z))}\;dz. 
    \end{align*}
    Since \(T\) is symplectic, the linear change of variables \(z'=Tz\) has determinant \(1\). So, after this change of variables we get 
    \begin{align*}
        \int_{\R^{2d}}F(z)V_{\phi_k}\phi_n(z)\overline{V_{\phi_k}\phi_m(z)}\;dz,
    \end{align*}
    and the result follows.
\end{proof}

\subsection{Localizing on domains}
This section will focus on the special case when \(F=\chi_\Omega\) is a characteristic function. In terms of the spectrogram localization problem~\eqref{ConcProblem} this is the most interesting case. We will see that the conditions on \(\chi_\Omega\) from Proposition~\ref{Prop:WeightDaub} result in geometric conditions on \(\Omega\). Firstly, as seen by Fernández and Galbis, \(\chi_\Omega\in FG(\R^{2d})\) is equivalent to \(\Omega\) being \textit{thin at infinity}~\cite{FG2}. A set \(\Omega\) is thin at infinity if for one, and thus any, \(R>0\) we have
    \[\lim_{|z|\to\infty}\left|\Omega\cap B(z,R)\right|=0,\] where \(|\cdot|\) denotes the Lebesgue measure of the set. Bounded sets are thin at infinity, but sets with infinite measure can also be thin at infinity. One example is the set \[\Big\{z\in\R^2\colon \left|x^{4/3}-y^2\right|<1\Big\}.\]
On the other hand \(\chi_\Omega\in EFG(\R^{2d})\) if either \(\Omega\) or \(\Omega^c\) is thin at infinity. For our results we will only assume \(\Omega\) to be thin at infinity, as the extension to co-thin at infinity is clear.\newline

Secondly, let us treat polyradiality. The idea that polyradial functions are the natural multidimensional analogue of radially symmetric function is well-known, and goes back to Daubechies. However, it is less clear which domains produce polyradial characteristic functions. One of our contributions is noticing the connection between Reinhardt domains and polyradiality. In light of Lemma~\ref{Lem:Reinhardt} it is clear that \(\chi_\Omega\) being polyradial is equivalent to \(\Omega\) being a Reinhardt domain, as the decomposition~\eqref{eq:Polydecomp} also shows. Furthermore, the eigenvalues can be nicely interpreted as integrals over the Reinhardt shadow. We collect these findings in the result below. As usual, the same conclusion holds for Hagedorn wavepackets if instead \(\Omega\) is the symplectic image of a Reinhardt domain.

\begin{corollary}\label{MyDblOrth}
    Let $\Omega$ be Reinhardt domain with Reinhardt shadow $W$, and assume that \(\Omega\) is thin at infinity. Let $k\in\N_0^d$. Then, the eigenfunctions of the localization operator \(A_{\Omega}^{\phi_k}\) are the product Hermite functions \(\{\phi_n\}_{n\in\N_0^d}\). The \(n\)-th eigenvalue is \begin{align*}
        c_{n,k}(\Omega)=(2\pi)^d\frac{k!}{n!}\pi^{|n-k|}\int_W r^{2n-2k+1}e^{-\pi |r|^2} \left(\prod_{j=1}^dL_{k_j}^{n_j-k_j}(\pi r_j^2)\right)^2\; dr.
    \end{align*}
\end{corollary}

\begin{remark}
    By performing the change of variables \(u=\pi r^2\) the formula for \(c_{n,k}(\Omega)\) would completely align with Equation~\eqref{eq:eigFormula}, but we have opted to not do this, to keep the connection to the Reinhardt shadow clear.
\end{remark}

\begin{remark}
The proof of Corollary~\ref{MyDblOrth}, which is the exact same as the proof of Proposition~\ref{Prop:WeightDaub}, reveals that STFTs of Hermite functions are orthogonal on Reinhardt domains:\begin{align}\label{eq:ReinhardtDblOrth}
    \int_{\Omega}V_{\phi_k}\phi_n(z)\overline{V_{\phi_k}\phi_m(z)}\;dz=c_{n,k}(\Omega)\delta_{n,m}.
\end{align}
This equation should be thought of as the proper multidimensional analogue of Lemma~\ref{AbreuSpeckbacher}, which in \cite{LargeSieve}, Abreu and Speckbacher used to derive local reproducing formulas and sieving inequalities for the STFT on discs in \(\R^2\). The extension~\eqref{eq:ReinhardtDblOrth} suggests that similar results can be deduced in higher dimensions as well, and not just for discs, but for any Reinhardt domain. This is indeed the case, and for an account of this, as well as extensions to the noncommutative setting, see~\cite{QLS}.
\end{remark}
\subsection{Examples}

We will now present some special cases of the results from this section.

\subsubsection{Recovery of a known result}

In the case \(d=1\), Lemma \ref{MyDblOrth} is a statement about localization operators with circular mask and a Hermite function as window. In particular, if \(\Omega\) is a disc of radius \(R\) centered at \(0\), and \(\phi_k\) is the window function, then the eigenvectors of the localization operator \(A_{D_R}^{\phi_k}\) are the Hermite functions \(\{\phi_n\}_{n\in\N_0}\), and the eigenvalues are (resorting to the reflection identity \ref{Reflection} when \(n<k\))\begin{align*}
    \lambda_n &=\int_{D_R} |V_{\phi_k}\phi_n(z)|^2\;dz =2\pi \frac{k!}{n!}\pi^{n-k}\int_{0}^R r^{2n-2k+1}\left(L_{k}^{n-k}(\pi r^2)\right)^2 e^{-\pi r^2}\;dr \\&=\frac{k!}{n!}\int_0^{\pi R^2} t^{n-k}\left(L_{k}^{n-k}(t)\right)^2 e^{-t}\;dt.
\end{align*}
This result was shown in \cite[Prop.4.1]{HermiteDaub}, and was to our knowledge the best extension of Daubechies' theorem until now.\newline

\subsubsection{Localization on balls}

Our result extends the one from \cite{HermiteDaub} to localization operators in any dimension. If \(k\in\N_0^d\), then the STFT's of the Hermite functions, \(\{V_{\phi_k}\phi_n\}_{n\in\N_0^d}\) are orthogonal on any Reinhardt domain, and are thus the eigenfunctions of the corresponding localization operator \(A_{\Omega}^{\phi_k}\). The phase space ball is the most interesting example. If we let \(\Omega\subset \R^{2d}\) be the ball of radius \(R\) centered at \(0\), then its Reinhardt shadow is the ``quarter"-disc \(\{r\in V^d\colon \sum_{j=1}^d r_i^2\leq R^2\}\). Plugging this into Proposition \ref{Prop:WeightDaub} we get the following formula for the eigenvalues: \begin{align*}
    \lambda_n=(2\pi)^d\frac{k!}{n!}\pi^{|n-k|}\int_{\sum_{j=1}^d r_i^2\leq R^2} r^{2n-2k+1}e^{-\pi |r|^2} \left(\prod_{j=1}^dL_{k_j}^{n_j-k_j}(\pi r_j^2)\right)^2\; dr,
\end{align*}
which is valid in any dimension. Similar formulas can be deduced for any Reinhardt domain where one can describe the shadow.\newline

\subsubsection{A non-example}\label{ssec:Square}

Returning to \(d=1\), let \(Q_{a}\) be the square \([-a,a]\times[-a, a]\). \(Q_a\) is not a Reinhardt domain, so let us show that the Hermite functions cannot be the full set of eigenfunctions of the localization operator \(A_{Q_a}^{\phi_0}\). If they were, then we should have orthogonality of \(\{V_{\phi_0}\phi_n\}_{n\in\N_0}=\big\{\sqrt{\frac{\pi^n}{n!}}\overline{z}^ne^{-\frac{\pi}{2} |z|^2}\big\}_{n\in\N_0}\) on \(Q_{a}\). However, a combinatorial argument reveals that \begin{align*}
    \int_{-a}^a\int_{-a}^a \overline{z}^m z^n e^{-\pi |z|^2}\;dz=\begin{cases}
        \text{non-zero},\quad|m-n|=4k,\;k\in\N_0,\cr
        0\quad\text{otherwise. }
    \end{cases}
\end{align*}
 Thus, not all Hermite functions can be eigenfunctions of \(A_{Q_a}^{\phi_0}\). In fact, in~\cite{AbreuDorfler} it was even shown that \textit{no} Hermite function can be an eigenfunction of \(A_{Q_a}^{\phi_0}\)! This example suggests that if one wants the Hermite functions to be eigenfunctions, the domain needs full rotational symmetry. Partial rotational symmetry, like the square has, is not sufficient. For \(d=1\) this suspicion is again confirmed by the results of ~\cite{AbreuDorfler}, where it is shown that if \(\Omega\) is not fully rotationally invariant, then no Hermite functions can be eigenfunctions of \(A_{\Omega}^{\phi_0}.\)

\subsubsection{Projective localization}

Some choices of the mask \(F\in EFG(\R^{2d})\) admit a geometrical interpretation of \(A_F^g\), just like the operators \(A_{\Omega}^g\). Namely, we may interpret \(F\) as the curvature of a surface in \(\R^{2d+1}\), parametrized by phase space. For instance, we recognize the mask \[FS(x,w)=\frac{4}{(1+x^2+w^2)^2}=\left(\frac{2}{1+|z|^2}\right)^2\]
as the Fubini-Study metric of the Riemann sphere \(\C P^1\). We may therefore think of the localization operator \(A_{FS}^g\) as an operator that ``wraps" the STFT around the Riemann sphere, before synthesiszing back. This interpretation is justified by looking at the corresponding localization problem. We have \cite{ZelditchToric}\begin{align*}
    \langle A_{FS}^g f,f\rangle=\int_{\R^2}|V_g f(z)|^2 FS(z)\;dz=\int_{\C P^1} |V_gf(\zeta)|^2\;d\zeta=\|V_gf\|^2_{\C P^1}. 
\end{align*}
Thus, the eigenfunctions of \(A_{FS}^{g}\) are the functions whose STFT has the largest norm when considered as functions on the Riemann sphere. Since \(FS\) is a polyradial function (and it is in \(EFG(\R^2)\)), then if we let \(g=\phi_k\), Proposition \ref{Prop:WeightDaub} tells us that the eigenfunctions are the Hermite functions, and that the eigenvalues are 
\begin{align*}
    \lambda_n &=2\pi \frac{k!}{n!}\pi^{n-k}\int_{0}^\infty \frac{4}{(1+r^2)^2} r^{2n-2k+1}\left(L_{k}^{n-k}(\pi r^2)\right)^2 e^{-\pi r^2}\;dr
    \\&=4\pi^2\frac{k!}{n!}\int_0^{\infty}\frac{t^{n-k}}{(\pi+t)^2}\left(L_k^{n-k}(t)\right)^2e^{-t}\; dt.
\end{align*}

So the Hermite functions solve the localization problem on \(\C P^1\) when \(g=\phi_k\). The same is true for \(\C P^d\), as one can see by using the \(d\)-dimensional Fubini-Study metric as the mask of a localization operator. In general, the Hermite functions should solve any surface localization problem as long as the normal vector of the surface is polyradial and in \(EFG(\R^{2d})\).\newline

\subsubsection{Zero-diagonal wavepackets as solutions}
Let us return to the example in Section \ref{HagedornEx}. Pick the Lagrangian frame \(Q=\begin{pmatrix}
    1 & \frac{1-i}{\sqrt{2}} \\ \frac{1-i}{\sqrt{2}} & 1
\end{pmatrix}\) and \(P=\begin{pmatrix}
        i-1 & 0 \\
        0 & i-1
    \end{pmatrix}.\) The corresponding symplectic matrix \begin{align*}
    T=\begin{pmatrix}
        1 & \frac{1}{\sqrt{2}} & 0 & -\frac{1}{\sqrt{2}} \\
        \frac{1}{\sqrt{2}} & 1 & -\frac{1}{\sqrt{2}}& 0 \\
        -1 & 0 & 1 & 0 \\
        0 & -1 & 0 & 1 
    \end{pmatrix}
\end{align*}
describes how much one should deform the problem by if one wants to invoke Corollary~\ref{HWPDblOrth}. With this in mind, given any polyradial mask \(F \in EFG(\R^4)\), the Hagedorn wavepackets given by Equation \eqref{0Dformula} solve the eigenvalue problem for the localization operator \(A_{F\circ T^{-1}}^{\phi_k[Q,P]}\).

\section{Mixed-state localization operators}\label{Sec:Mixed-State}

We now want to solve the eigenvalue problem for mixed-state localization operators. Recall that the mixed-state localization operator with mask \(F\) and window \(S\) is defined as the function-operator convolution \(F\star S\). We will use quantum harmonic analysis to generalize results from the previous sections to the mixed-state case. The first step is the following quantum analogue of double orthogonality.

\begin{prop}\label{OperatorOrth}
    Let \(S\) be an operator on \(L^2(\R^d)\), and \(F\) a tempered distribution on \(\R^{2d}\) such that the mixed-state localization operator \(F\star S\) is compact. Let \(\{\psi_n\}_{n\in\N_0^d}\) be a collection of functions in \(L^2(\R^d)\). Then \(\{\psi_n\}_{n\in\N_0^d}\) are the eigenfunctions of \(F\star S\) if and only if the following conditions are satisfied
    \begin{align}\label{QDbl1}
        \{\psi_n\}_{n\in\N_0^d}\text{ is an orthonormal basis of }L^2(\R^d),
    \end{align}
    and
    \begin{align}\label{QDbl2}
        \int_{\R^{2d}} F(z)Q_{S}\left(\psi_n,\psi_m\right)(z)\;dz=c_n\delta_{n,m}
    \end{align}
    hold. In this case, the \(n\)-th eigenvalue is \(c_n\). We call the two conditions the \textbf{quantum double orthogonality} conditions.
\end{prop}
\begin{proof}
    First assume that the eigenfunctions of \(F\star S\) are \(\{\psi_n\}_{n\in\N_0^d}\). By the spectral theorem the eigenfunctions of the operator \(F\star S\) form an ONB of \(L^2(\R^d)\). Using the orthogonality of \(\{\psi_n\}_{n\in\N_0^d}\), as well as the fact that they are eigenfunctions, we get \begin{align*}
        \lambda_n \delta_{n,m} &=\lambda_n\langle \psi_n,\psi_m \rangle=\langle \lambda_n \psi_n,\psi_m\rangle=\langle F\star S(\psi_n),\psi_m\rangle \\
        &=\int_{\R^{2d}}F(z)\langle \alpha_z(S)\psi_n,\psi_m\rangle\;dz=\int_{\R^{2d}}F(z)\mathrm{tr}\left( (\psi_n\otimes \psi_m)\alpha_z(S)\right)\;dz
        \\ &=\int_{\R^{2d}}F(z)\left(\left(\psi_n\otimes\psi_m\right)\star\check{S}\right)(z)\;dz=\int_{\R^{2d}}F(z)Q_{S}(\psi_n,\psi_m)(z)\;dz.
    \end{align*}
    Let us now prove the converse. Assume that \(\{\psi_n\}_{n\in\N_0^d}\) is an ONB satisfying the integral identity above. Since \(\{\psi_n\}_{n\in\N_0^d}\) is an ONB, we may expand \(F\star S\) in terms of its matrix coefficients. That is, there are coefficients \(a_{m}\) such that \begin{align}\label{LinComb}
        F\star S(\psi_n)=\sum_{m\in\N_0^d} a_{m}\psi_m.
    \end{align}
    By definition, the coefficients are \begin{align*}
        a_{m}=\langle F\star S(\psi_n),\psi_m\rangle=\int_{\R^{2d}} F(z)Q_{S}\left(\psi_n,\psi_m\right)(z)\;dz=c_n\delta_{n,m}.
    \end{align*}
    Plugging this into the expansion, we have \begin{align*}
        F\star S(\psi_n)=c_n\psi_n\quad \forall \;n\in\N_0^d.
    \end{align*}
    So \(\{\psi_n\}_{n\in\N_0^d}\) is a complete set of eigenfunctions for \(F\star S\).
\end{proof}
\begin{remark}\label{Rem:DblQdbl}
    When \(S\in\mathcal{S}^1\), the condition~\eqref{QDbl1} is equivalent to \(\int_{\R^{2d}}Q_{S}\left(\psi_n,\psi_m\right)(z)\;dz=b_n\delta_{n,m}\), a consequence of the generalized Moyal identity. However, since we also consider operators not in the trace class, we have stated~\eqref{QDbl1} in a way which does not require the trace of \(S\) to be finite.
\end{remark}
The assumptions of Proposition~\ref{OperatorOrth} are less concrete than those of Proposition~\ref{GeneralDblOrth}. In particular, verifying the compactness of \(F\star S\) is nontrivial, so let us therefore briefly discuss some sufficient conditions for compactness. If \(S\in \mathcal{S}^1\) then the continuity of function-operator convolution \cite{SchwartzOps} assures that compactness is equivalent to \(F\in FG(\R^{2d})\). Moreover, since every \(F\in EFG(\R^{2d})\) can be written as \(c+G\) with \(G\in FG(\R^{2d})\), we have that \(F\star S=c\star S+G\star S=c\cdot\mathrm{tr}(S)\cdot \mathrm{Id}_{L^2(\R^d)}+F\star S\). Arguing as in the previous section we can also use the quantum double orthogonality characterization on \(F\star S\) with \(F\in EFG(\R^{2d})\), too.

For operators not in \(\mathcal{S}^1\) we instead resort to the Tauberian theorem for operators \cite{bible3}. In particular, Tauberian theory gives the following characterization of compactness when \(F\in L^1(\R^{2d})\)~\cite[Prop.6.1]{bible3}: Given \(S\in\mathcal{B}(L^2(\R^d))\) and \(F\in L^1(\R^{2d})\), \(F\star S\) is compact if and only if there exists some \(\phi\in L^2(\R^d)\) with \(V_{\phi}\phi\) zero-free such that the Cohen's class \(Q_S(\phi)\) is a continuous function vanishing at infinity. Note that this criterion is independent of \(F\), making it a very useful tool. For example, it easily shows that the Weyl transform \(L_F\) is compact for  all \(F\in L^1(\R^{2d})\). We recognize \(L_F\) as the mixed-state localization operator \(F\star(2^d P)\), and if we let \(\phi=\phi_0\), then the corresponding Cohen's class is\begin{align*}
        Q_{2^dP}(\phi_0)(z)=W(\phi_0)(z)=2^de^{-2\pi |z|^2}, 
    \end{align*}
which is clearly continuous and vanishing at infinity. Similarly, \(F\star S\) will also be compact for any \(F\in L^1(\R^{2d})\) and any \(S\) with Weyl symbol in \(L^2(\R^{2d})\cap L^1(\R^{2d})\), which again can be seen by considering the Cohen's class distribution of the Gaussian. The two methods discussed above are not the only ways of ensuring compactness of \(F\star S\), but for our purposes they will be enough.\newline

To illustrate quantum double orthogonality, let us first consider the rank-one case \(S=g\otimes g\). Then \(Q_{S}\left(\psi_n,\psi_m\right)(z)=V_g\psi_n(z)\overline{V_g\psi_m(z)}\), and so~\eqref{QDbl2} reduces to the classical double orthogonality relation~\eqref{Dbl2}, and by Remark~\ref{Rem:DblQdbl},~\eqref{QDbl1} is in this case equivalent to~\eqref{Dbl1}. So Proposition \ref{OperatorOrth} really is a generalization of classical double orthogonality.\newline

If \(S=2^d P\) and \(F\in L^1(\R^{2d})\), then \(Q_{S}\left(\psi_n,\psi_m\right)(z)=W(\psi_n,\psi_m)(z)\), and the orthogonality condition \eqref{QDbl2} becomes \begin{align*}
    \int_{\R^{2d}} F(z)W(\psi_n,\psi_m)(z)\;dz=c_n\delta_{n,m}. 
\end{align*}
In \cite{LiebOstrover}, Lieb and Ostrover showed that the Hermite functions \(\{\phi_n\}_{n\in\N_0^d}\) satisfy this relation when \(F(z)\) is the characteristic function of a ball with radius \(R\). It was also implicitly used in the one-dimensional case by Flandrin and Ramanathan-Topiwala \cite{flandrin,RamTop2}. Proposition~\ref{OperatorOrth} thus captures both the classical double orthogonality of STFTs and techniques used for the Wigner concentration problem studied by the above mentioned authors. Unlike the classical case this identity can not be reduced to a classical orthogonality statement, as \(\psi_n\) and \(\psi_m\) are mixed together inside the integral.\newline

If \(S\) is a positive trace class operator, then we can use the square root to factorize the Cohen's class: \begin{align*}
    Q_{S}\left(\psi_n,\psi_m\right)(z)&=\langle \alpha_z(S)\psi_n,\psi_m\rangle=\langle \pi(z)\sqrt{S}\sqrt{S}\pi(z)^*\psi_n,\psi_m\rangle\\&=
    \langle \sqrt{S}\pi(z)^*\psi_n,\sqrt{S}\pi(z)^*\psi_m\rangle=\langle \mathfrak{V}_{\sqrt{S}}\psi_n\otimes e,\mathfrak{V}_{\sqrt{S}}\psi_m\otimes e\rangle_{\mathcal{S}^2},
\end{align*} 
where \(e\) is any normalized function in \(L^2(\R^d)\). So for every positive trace class operator Proposition \ref{OperatorOrth} reduces to a statement about double orthogonality of the operator STFTs \(\{\mathfrak{V}_{\sqrt{S}}\psi_n\otimes e\}_{n\in\N_0^d}\) in the Bochner space \(L^2(\R^{2d},\mathcal{S}^2)\). 
We will make use of this connection in Section~\ref{Sec:Toeplitz}.\newline

With the examples in mind, let us now formulate quantum versions of Daubechies' theorem. Our main tool will be Proposition \ref{OperatorOrth}, but even without it, one can use continuity and linearity to formally extend the generalizations of Daubechies' theorem from the last section to operators of the form \begin{align*}
    S=\sum_{|k|=0}^\infty \alpha_k\phi_k[Q,P]\otimes\phi_k[Q,P].
\end{align*}
This extenstion works well in theory, but is not useable in practice since it requires an explicit decomposition of \(S\) in terms of Hagedorn states. It also does not provide a closed form expression for the eigenvalues. We mitigate this problem by instead imposing a condition on the Fourier-Wigner transform of the operator. The class of operators we will now consider are the following. 
\begin{definition}
    Let \(S \) be an operator on \(L^2(\R^d)\) with kernel in \(\mathscr{S}'(\R^{2d})\). If the Fourier-Wigner transform \(\mathcal{F}_W(S)\) is a radial tempered distribution, we call \(S\) a \textit{radial operator}. If \(\mathcal{F}_W(S)\) is polyradial, we call \(S\) a \textit{polyradial operator}.
\end{definition}
Since the Weyl symbol of \(S\), \(a_S\), is \(\mathcal{F}_{\sigma}\mathcal{F}_W(S)(z)\), and the symplectic Fourier transform preserves polyradiality, a polyradial operator can equivalently be defined as an operator with a polyradial Weyl symbol. Several familiar operators are polyradial, and we collect some of them in the following example.
\begin{exmp}
    \begin{itemize}
        \item The Fourier-Wigner transform of a pure state \(g\otimes g\) is known to be the ambiguity function, \(e^{\pi i \langle x, \omega\rangle} V_gg.\) It is well known~\cite[Corollary 4.77]{Folland} that the only polyradial pure states are the Hermite pure states \(\phi_n\otimes\phi_n\). If \(d>1\) the pure state \(\phi_0\otimes\phi_0\) is the only radial pure state.
        \item The parity operator, \(2^dPf(t)=2^df(-t)\), has Fourier-Wigner transform \(1\), and is thus a radial operator. 
        \item The Hamiltonian of the quantum harmonic oscillator, \(Hf(t)=\frac{1}{2}\left( t^2f(t)-\frac{\Delta f(t)}{4\pi}\right)\), is the Weyl transform of \(\frac{x^2+\omega^2}{2}\), 
        which is a radial function. Thus \(H\) is a radial (unbounded) operator.
        \item From \cite[Section 2.1]{Folland}, the Fourier transform has Weyl symbol \((1+i)^de^{-2\pi i|z|^2}\), so it is a radial operator.
        \item By Pool's theorem \cite{Pool}, any polyradial function \(f\in L^2(\R^d)\) defines a polyradial Hilbert-Schmidt operator via the Weyl transform.
    \end{itemize}
\end{exmp}

Out of these examples, only the pure Hermite states \(\phi_n\otimes \phi_n\) are polyradial trace class operators. Note however that there are polyradial trace class operators that are not pure. An immediate example is of course given by finite linear combinations \(\sum_{|n|=0}^N \lambda_n\phi_n\otimes\phi_n\), but infinite rank examples also exist. For instance, in \cite{Thermal} the Gaussians that produce positive trace class operators were characterized. If \(J\) denotes the \(2d\times 2d\)-matrix \(\begin{pmatrix}
    0 & \mathrm{Id}_d \\ -\mathrm{Id}_d & 0
\end{pmatrix}\) and \(M\) is a positive, symmetric \(2n\times 2n\)-matrix, then the Weyl transform of the Gaussian \begin{align}\label{GeneralGaussian}
    g_M(z)=(2\pi)^{-d}\sqrt{\mathrm{det}(M^{-1})}e^{-\frac{1}{2}\langle M^{-1}z,z\rangle}
\end{align} is a positive trace class operator if and only if \(M+\frac{i}{4\pi}J\) is positive semidefinite \cite[Prop. 21]{Thermal}. Clearly, these operators are only polyradial if \(M\) is diagonal and the diagonal entries corresponding to \(x_i\) and \(\omega_i\) agree. In one dimension, these operators are called \textit{thermal states,} and they are known to be mixed states. In particular, given an energy \(E>0\), the matrix \begin{align*}
    E=\begin{pmatrix}
        \frac{1}{4\pi}+\frac{E}{2\pi} & 0 \\ 0 & \frac{1}{4\pi}+\frac{E}{2\pi}
    \end{pmatrix}
\end{align*}
gives a Gaussian whose Weyl transform factors as \begin{align}\label{ThermalState}
    L_{g_E}=\frac{1}{E+1}\sum_{n=0}^\infty \frac{E^n}{(E+1)^n}\phi_n\otimes\phi_n,
\end{align}
and is thus not a pure state.
\newline

We are now ready for the two main results of this section. Given a polyradial mask \(F\) and a polyradial operator \(S\), the eigenfunctions of the mixed-state localization operator \(F\star S\) are given by a Daubechies-type theorem.

\begin{prop}[Daubechies' theorem for polyradial operators]\label{OperatorDaub}
    Let \(S \) be a polyradial operator on \(L^2(\R^d)\), and \(F\) be a polyradial mask such that the mixed-state localization operator \(F\star S\) is compact. Then the eigenfunctions of \(F\star S\) are the product Hermite functions \(\{\phi_n\}_{n\in\N_0^d}\). The \(n\)-th eigenvalue is  \begin{align}\label{operatorFormula}
    \begin{split}
        \lambda_n &=\int_{\R^{2d}} (F*a_S)(z)\cdot W(\phi_n)(z)\;dz\\
        &=(-1)^{|n|}\int_{V^d}F*a_S\left(\sqrt{\frac{u}{2\pi}}\right)\left(\prod_{j=1}^d L_{n_j}^0(2u)\right)e^{-u}\;du.
    \end{split}\end{align}
\end{prop}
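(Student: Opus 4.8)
The plan is to verify the two \emph{quantum double orthogonality} conditions of Proposition \ref{OperatorOrth} for the candidate eigensystem $\{\phi_n\}_{n\in\N_0^d}$ and then read off the eigenvalue. Since the hypothesis is $F\in EFG(\R^{2d})$ rather than $F\in FG(\R^{2d})$, I first invoke the extension recorded in the remark following Proposition \ref{OperatorOrth}: writing $F=c+G$ with $G\in FG(\R^{2d})$ exhibits $F\star S$ as a compact perturbation of $c\cdot\mathrm{tr}(S)\cdot\mathrm{Id}$, so the spectral theorem applies and the double-orthogonality characterization holds verbatim (with the eigenvalue formula accounting for the constant shift automatically). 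The product Hermite functions already form an ONB of $L^2(\R^d)$, so the first condition is free, and the whole problem reduces to evaluating $\int_{\R^{2d}}F(z)\,Q_S(\phi_n,\phi_m)(z)\;dz$ and showing it equals $c_n\delta_{n,m}$.

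Next I rewrite the integrand. By the identity $(\psi_n\otimes\psi_m)\star\check S=Q_{a_S}(\psi_n,\psi_m)$ used in the proof of Proposition \ref{OperatorOrth}, the polarized Cohen's class is $Q_S(\phi_n,\phi_m)=W(\phi_n,\phi_m)*a_S$. The key manoeuvre is to transfer the convolution onto the mask: for a distribution $\mu$ and a test object $a$ one has $\int F\cdot(\mu*a)=\int(F*\check a)\cdot\mu$, and since $S$ is polyradial its Weyl symbol $a_S$ is polyradial, hence even, so $\check a_S=a_S$. This yields
\[
\int_{\R^{2d}}F(z)\,Q_S(\phi_n,\phi_m)(z)\;dz=\int_{\R^{2d}}(F*a_S)(z)\,W(\phi_n,\phi_m)(z)\;dz,
\]
which is already the first line of \eqref{operatorFormula} once $m=n$. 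I then note that $G:=F*a_S$ is again polyradial, being a convolution of two component-wise rotation invariant distributions.

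To finish I substitute the Wigner distribution of the Hermite functions, obtained from Lemma \ref{Wigner} with $Q=\mathrm{Id}_d$, $P=i\mathrm{Id}_d$: up to the radial Gaussian $e^{-2\pi|z|^2}$ it is a product of conjugated complex Hermite polynomials $\overline{H_{n_j,m_j}}$, whose angular dependence is $e^{-i(n_j-m_j)\theta_j}$. Pairing against the polyradial $G$ and passing to polycylindrical coordinates, the integral over each $\theta_j$ gives $2\pi\delta_{n_j,m_j}$, so the off-diagonal terms vanish; this simultaneously confirms the second orthogonality condition and shows $c_n$ is the diagonal integral. On the diagonal the complex Hermite polynomial degenerates to the genuinely radial Laguerre factor $H_{n_j,n_j}(z_j)=L_{n_j}^0(4\pi|z_j|^2)$, and the remaining radial integral, after the substitution $u_j=2\pi r_j^2$ (turning the argument of $G$ into $\sqrt{u/(2\pi)}$, the Gaussian into $e^{-u}$, and the Laguerre argument into $2u_j$) together with the bookkeeping of the constants $2^d$, $(2\pi)^d$ and the Jacobian $(4\pi)^{-d}$, which combine to $1$, collapses to exactly the second line of \eqref{operatorFormula}.

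The main obstacle I anticipate is \textbf{justifying the convolution transfer and the interchange of integrations rigorously in the distributional setting}: $F$ need not be integrable, and although $S\in\mathcal{S}^1$ keeps $a_S$ tame, the manipulations must be carried out at the level of $S_0'(\R^{2d})$ and Schwartz operators, leaning on the separate continuity and associativity of Werner's convolutions rather than on a naive Fubini argument. A secondary point needing care is the self-adjointness of $F\star S$, so that a genuine orthonormal eigenbasis exists; this should follow from $F$ being real-valued and $S$ self-adjoint, the relevant polyradial trace-class operators being the positive ones.
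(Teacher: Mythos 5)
Your proposal is correct and follows essentially the same route as the paper's proof: reduce to the quantum double orthogonality of Proposition \ref{OperatorDaub}'s companion result, Proposition \ref{OperatorOrth} (extended to $EFG(\R^{2d})$ via the remark), use polyradiality to get $a_{\check S}=a_S$ and transfer the convolution onto the mask so that $G=F*a_S$ is polyradial, then kill the off-diagonal terms by angular integration in polycylindrical coordinates and obtain the eigenvalue with the same substitution $u=2\pi r^2$ and the same cancellation of constants. The only cosmetic difference is that you read off $W(\phi_n,\phi_m)$ by specializing Lemma \ref{Wigner} to $Q=\mathrm{Id}_d$, $P=i\,\mathrm{Id}_d$ rather than via the Wigner--STFT identity plus the Laguerre connection as the paper does (and note the intermediate identity should carry the scaling in the argument, $H_{n_j,n_j}(2z_j)=L^0_{n_j}(4\pi\abs{z_j}^2)$, rather than $H_{n_j,n_j}(z_j)$, though your final formula is the correct one).
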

\begin{proof}
    With Proposition \ref{OperatorOrth} in mind, all we need to prove is the orthogonality condition~\eqref{QDbl2}. Since \(S\) is polyradial, we have \(\mathcal{F}_W(\check{S})(z)=\mathcal{F}_W(S)(z)\), which follows from the definition, as well as the identity \(P\pi (z) P=\pi (-z)\). Thus we also have \(a_S=a_{\check{S}}\). The integral now becomes \begin{align*}
        \int_{\R^{2d}} F(z)Q_{S}\left(\phi_n,\phi_m\right)(z)\;dz &=\int_{\R^{2d}} F(z)\left(a_{\check{S}}*W(\phi_n,\phi_m)\right)(z)\;dz\\
        &=\int_{\R^{2d}} F(z)\left(a_{S}*W(\phi_n,\phi_m)\right)(z)\;dz \\ &=\int_{\R^{2d}} \left(F*Pa_{S}\right)(z)\cdot W(\phi_n,\phi_m)(z)\;dz \\ 
        &=\int_{\R^{2d}} \left(F*a_{S}\right)(z)\cdot W(\phi_n,\phi_m)(z)\;dz
    \end{align*}
    where we used Fubini's theorem in the last line. Since convolution preserves polyradiality, the convolution in the last line will be polyradial, and for simplicity, we will write  \begin{align*}
        G(z)=\left(F*a_S\right)(z).
    \end{align*} Thus, we have 
    \begin{align*}
        \int_{\R^{2d}}G(z)\cdot W(\phi_n,\phi_m)(z)\;dz &=\int_{\R^{2d}} G(z) 2^d e^{4\pi i \langle x,\omega\rangle} V_{\check{\phi}_m}\phi_n(2z)\;dz\\ &=2^d(-1)^{|m|}\int_{\R^{2d}} G(z)  e^{4\pi i \langle x,\omega\rangle} V_{\phi_m}\phi_n(2z)\;dz
        \\&=2^d(-1)^{|m|}\int_{\R^{2d}} G(z)  e^{-2\pi|z|^2}H_{n,m}(2z)\;dz\\ &=2^d(-1)^{|m|}\int_{\R^{2d}} G(z) \rho_{n,m}(2r)e^{i(m-n)\theta}\;dz,
    \end{align*}
     where \(\rho_{n,m}\) is as in Equation \eqref{Decomp}. Orthogonality now follows by a change to polycylindrical coordinates. The formula for the eigenvalue follows by setting \(n=m\):
     \begin{align*}
         \lambda_n &=2^d(-1)^{|n|}\int_{\R^{2d}} G(z) \rho_{n,n}(2r)\;dz\\&=(4\pi)^d(-1)^{|n|}\int_{V^d}\left(F*a_S\right)(r)\left(\prod_{j=1}^d L_{n_j}^0(4\pi r_j^2)\right)e^{-2\pi r^2}r\;dr \\
        &=(-1)^{|n|}\int_{V^d}\left(F*a_S\right)\left(\sqrt{\frac{u}{2\pi}}\right)\left(\prod_{j=1}^d L_{n_j}^0(2u)\right)e^{-u}\;du.
     \end{align*}
\end{proof}
The eigenvalue formula~\eqref{operatorFormula} generalizes a one-dimensional formula of Ramanathan-Topiwala \cite{RamTop2} and is the most generally useful formula for the eigenvalues. Note however that it can sometimes be more instructive to use any of the following equivalent formulas:\begin{align}\label{alternate}
        \lambda_n &=\int_{\R^{2d}} (F*a_S)(z)\cdot W(\phi_n)(z)\;dz\\&=\int_{\R^{2d}} F(z)\left(a_{S}*W(\phi_n)\right)(z)\;dz=
        \int_{\R^{2d}} (F*W(\phi_n))(z)\cdot a_{S}(z)\;dz.
    \end{align}
What formula to use will depend on which convolution is the easiest to compute. For instance, we will make use of the second formula in Section~\ref{ssec:Gaussian}.

Lastly, let us state the Hagedorn wavepacket version of Proposition \ref{OperatorDaub}, which we will need later. Since the result follows directly from Proposition~\ref{OperatorDaub} and covariance, we state it as a corollary. 
\begin{corollary}\label{OperatorHagedorn}
    Let \(R \) be a polyradial Schwartz operator on \(L^2(\R^d)\), and let \(T\) be a \(d\times d\) symplectic matrix with block form \(\begin{pmatrix}
        A & B \\ C& D
    \end{pmatrix}.\) Let \(S\) be the operator with Weyl symbol \(a_S=a_R\circ T^{-1}.\) Let  \(F\) be a polyradial mask such that \((F\circ T^{-1})\star S\) is compact. Then, the eigenfunctions of \((F\circ T^{-1})\star S\) are the Hagedorn wavepackets \(\{\phi_n[Q,P]\}_{n\in\N_0^d}\), where \(Q=A+iB\) and \(P=C+iD\). The \(n\)-th eigenvalue is \begin{align*}
        \lambda_n=\int_{\R^{2d}} F(z)Q_{R}(\phi_n)(z)\;dz.
    \end{align*}
\end{corollary}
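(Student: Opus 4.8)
The plan is to reduce Corollary~\ref{OperatorHagedorn} to Proposition~\ref{OperatorDaub} via a symplectic change of variables, exactly as Corollary~\ref{HWPDblOrth} was deduced from Lemma~\ref{MyDblOrth} in the pure-state case. The key structural fact I would exploit is the symplectic covariance of the Weyl calculus: conjugation of an operator by a metaplectic operator $\mu(T)$ corresponds to precomposing its Weyl symbol with $T^{-1}$. Thus the operator $S$ with $a_S(z)=a_R(T^{-1}z)$ is precisely $S=\mu(T)R\,\mu(T)^{*}$, and since $R$ is polyradial its eigen-structure relative to the Hermite functions (guaranteed by Proposition~\ref{OperatorDaub}) gets transported by $\mu(T)$ into an eigen-structure relative to $\mu(T)\phi_n$. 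The crucial identification, which ties the two parameter conventions together, is that $\mu(T)\phi_n=\phi_n[Q,P]$ with $Q=A+iB$ and $P=C+iD$; this is exactly the definition of the Hagedorn wavepackets as the metaplectic images of the Hermite functions under the symplectic matrix built from the Lagrangian frame, matching condition~\eqref{Symplectic}.

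\emph{First I would} verify the eigenfunction claim through the quantum double orthogonality criterion of Proposition~\ref{OperatorOrth}. Since $R$ is a Schwartz operator and $F\in EFG(\R^{2d})$, the convolution $(F\circ T^{-1})\star S$ is a compact (indeed trace-class, after splitting off the constant part of $F$) self-adjoint operator, so the spectral theorem applies and it suffices to check that
\begin{align*}
    \int_{\R^{2d}} (F\circ T^{-1})(z)\,Q_{S}(\phi_n[Q,P],\phi_m[Q,P])(z)\;dz=c_n\delta_{n,m}.
\end{align*}
\emph{Next I would} unwind the Cohen's class $Q_S=a_{\check S}*W(\cdot,\cdot)$ and use the covariance of both the Wigner distribution and the Weyl symbol under $\mu(T)$: one has $W(\mu(T)\phi_n,\mu(T)\phi_m)(z)=W(\phi_n,\phi_m)(T^{-1}z)$ and $a_{\check S}(z)=a_{\check R}(T^{-1}z)$. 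Substituting these and performing the linear change of variables $z'=T^{-1}z$, whose Jacobian is $1$ because $T$ is symplectic, collapses the whole integral to
\begin{align*}
    \int_{\R^{2d}} F(z')\,Q_{R}(\phi_n,\phi_m)(z')\;dz',
\end{align*}
which is exactly the quantity shown to be $c_n\delta_{n,m}$ inside the proof of Proposition~\ref{OperatorDaub}. This simultaneously establishes orthogonality and identifies the eigenvalue as $\lambda_n=\int_{\R^{2d}}F(z)Q_R(\phi_n)(z)\;dz$, matching the stated formula.

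\emph{The main obstacle} will be bookkeeping around the two equivalent descriptions of a Lagrangian frame and making the covariance identities precise rather than merely plausible. Specifically, I must confirm that the metaplectic operator associated to the symplectic $T=\begin{pmatrix}A&B\\C&D\end{pmatrix}$ sends $\phi_n$ to $\phi_n[A+iB,\,C+iD]$ with the correct normalization and phase, so that no stray unimodular factors survive to spoil orthogonality; the cleanest route is to appeal directly to the defining construction of Hagedorn wavepackets as $\mu(T)$-images of Hermite functions (equation~\eqref{Symplectic} and the surrounding discussion) rather than recompute from the ladder operators. A secondary point to handle carefully is the polyradiality bookkeeping that let us replace $a_{\check S}$ by $a_S$ in Proposition~\ref{OperatorDaub}: since $R$ is polyradial, $\check R=R$ and hence $a_{\check R}=a_R$, so this simplification transports through the change of variables without incident. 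Once these covariance identities are pinned down, the corollary is a genuinely immediate consequence of the proposition, which justifies stating it as a corollary.
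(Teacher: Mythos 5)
Your proposal is correct and follows essentially the same route as the paper: both verify the quantum double orthogonality of Proposition \ref{OperatorOrth} by unwinding \(Q_S\) as a convolution with the Weyl symbol, using \(a_S=a_R\circ T^{-1}\) and the unit Jacobian of the symplectic change of variables to collapse the integral to \(\int_{\R^{2d}}F(z)Q_R(\phi_n,\phi_m)(z)\,dz\), which the proof of Proposition \ref{OperatorDaub} already shows equals \(c_n\delta_{n,m}\). The only cosmetic difference is that you justify the covariance \(W(\phi_n[Q,P],\phi_m[Q,P])(z)=W(\phi_n,\phi_m)(T^{-1}z)\) via the metaplectic identification \(\phi_n[Q,P]=\mu(T)\phi_n\) (which you correctly flag as needing phase bookkeeping), whereas the paper obtains the same identity directly from the explicit cross-Wigner formula of Lemma \ref{Wigner}, sidestepping that issue entirely.
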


We will now consider two applications of our results.
\subsection{A variant of Lerner's problem}
In~\cite{Lerner}, Lerner asked the following question about the Weyl transform of \(p\)-norm balls in \(\R^{2d}\): \begin{conjecture}[{\cite[Question 8.3]{Lerner}}]
    Let \(p\in [1,\infty]\setminus\{2\}\) and let \(\mathbb{B}_{p}^{2d}(R)\) denote the \(p\)-norm ball of radius \(R\) in \(\R^{2d}\) (with the obvious adjustment when \(p=\infty\)):\[\mathbb{B}_{p}^{2d}(R)=\big\{(x,\omega)\in \R^{2d}\colon \sum_{i=1}^{d}x_i^p+\omega_i^p\leq R^p\big\}.\] Is it possible to say anything about the spectrum of \(L_{\chi_{\mathbb{B}_{p}^{2d}(R)}}\)?
\end{conjecture} 
Lerner's problem is related to the Wigner localization problem~\eqref{WignerProblem}, and even for \(d=1\), it is highly nontrivial. Similarly to Example~\ref{ssec:Square} the challenges can likely be attributed to the limited rotation invariance of the \(p\)-balls. While we do not claim that our results solve Lerner's problem for \(p\)-balls in \(\R^{2d}\), the situation is completely different if we consider \(p\)-balls in \(\C^d\) instead. Recall from Example~\ref{Ex:Reinhardts} that for any \(p\in (0,\infty]\), the \(p\)-balls \begin{align}\label{eq:newBall}
     \mathfrak{B}_{p}^{d}(R)=\big\{(x,\omega)\in \R^{2d}\colon \sum_{i=1}^{d}|x_i+i\omega_i|^{p}\leq R^p\big\}
\end{align} are Reinhardt domains with shadow \[
W_p(R)=\big\{r\in V^d\colon \sum_{i=1}^d r_i^p\leq R^p \big\}.
\]
Furthermore, since the Weyl transform of \(F\) may be written as the mixed-state localization operator \(F\star 2^dP\), Daubechies' theorem for mixed-state localization operators applies, and we get a solution to our Lerner-like problem.
\begin{corollary}[Solution to Lerner's problem in $\C^d$]
    Let \(p\in (0,\infty]\) and \(0<R<\infty\), and let \( \mathfrak{B}_{p}^{d}(R)\) be as in \eqref{eq:newBall}. The eigenfunctions of the Weyl transform \(L_{\chi_{ \mathfrak{B}_{p}^{d}(R)}}=\chi_{ \mathfrak{B}_{p}^{d}(R)}\star 2^dP\) are the product Hermite functions, and the eigenvalue corresponding to \(\phi_n, n\in \N_0^d\) is \begin{align*}
        \lambda_n=(4\pi)^d(-1)^{|n|}\int_{W_p(R)}e^{-2\pi r^2} L_n^0(4\pi r^2)r\;dr.
    \end{align*}
\end{corollary}
\begin{proof}
    As \(R<\infty\), we have \(\chi_{ \mathfrak{B}_{p}^{d}(R)}\in L^1(\R^{2d})\) and thus \(\chi_{ \mathfrak{B}_{p}^{d}(R)}\star 2^dP\) is compact. Proposition~\ref{OperatorDaub} now applies. 
\end{proof}
For balls in \(\C^d\) we are even able to describe the spectrum for the non-convex case \(p\in (0,1)\), which further illustrates the advantages compared to balls in \(\R^{2d}\).\newline

\subsection{Consequences for Gaussian Cohen's classes}\label{ssec:Gaussian}
We have seen that letting \(S=\phi_n\otimes \phi_n\) or \(2^dP\) in Proposition \ref{OperatorDaub} recovers the known cases of the localization operators and the Weyl transform, respectively. 
Let us now illustrate the use of Proposition \ref{OperatorDaub} and Corollary \ref{OperatorHagedorn} by looking at a more exotic example: mixed-state localization operators on the form \(F\star L_{g_M}\), where \(g_M\) is a general Gaussian (see~\eqref{GeneralGaussian}). These Gaussians are significant in time-frequency analysis not just because their Weyl transforms are quantum states, but also because their Cohen's class distributions, \(Q_{g_M}(f)=W(f)*g_M\) are positive functions \cite[Theorem 4.4.4]{Grochenig}. From what we know the spectral properties of these operators have not been studied before. We want to use Proposition \ref{OperatorDaub} to find the eigenfunctions and eigenvalues of \(F\star L_{g_M}\), but as already noted, only the Gaussians where \(M=\mathrm{diag}(m_1,m_2,\dots, m_d,m_1,m_2,\dots , m_d)\) give polyradial operators. Still, we can solve the eigenvalue problem for \textit{any} covariance matrix \(M\) by using Williamson's diagonalization theorem \cite[Prop. 4.22]{Folland}. Since \(M\) is symmetric and positive definite, there is a symplectic \(2d\times 2d\)-matrix \(T\) and a diagonal matrix \(K=\mathrm{diag}(k_1,k_2,\dots, k_d,k_1,k_2,\dots , k_d)\) with \(k_i>0\) such that \begin{align}\label{Williamson}
    M=TKT^T.
\end{align}
Plugging this into the definition of \(g_M\), we see that \begin{align*}
    g_M(z) &=(2\pi)^{-d}\sqrt{\mathrm{det}(M^{-1})}e^{-\frac{1}{2}\langle M^{-1}z,z\rangle}\\
    &=(2\pi)^{-d}\sqrt{\mathrm{det}(T^{-T}K^{-1}T^{-1})}e^{-\frac{1}{2}\langle T^{-T}K^{-1}T^{-1}z,z\rangle}\\
    &=(2\pi)^{-d}\sqrt{\mathrm{det}(K^{-1})}e^{-\frac{1}{2}\langle K^{-1}T^{-1}z,T^{-1}z\rangle}=g_K\left(T^{-1}z\right).
\end{align*}
So for any admissible \(M\), the trace class operator \(L_{g_M}\) satisfies the assumptions of Corollary \ref{OperatorHagedorn}, and we get the eigenfunctions and eigenvalues for suitable masks \(F\). 
\begin{corollary}
    Let \(M\in\R^{2d\times 2d}\) be a positive definite, symmetric matrix such that \(M+\frac{J}{4\pi}\) is positive semidefinite. Let \(K\) and \(T\) denote the matrices in the Williamson diagonalization of \(M\), as in ~\eqref{Williamson}, and write \(T\) on block form: \(T=\begin{pmatrix}
        A & B \\ C & D
    \end{pmatrix}\). Let \(G\in EFG(\R^{2d})\) be polyradial, and let \(F(z)=G\left(T^{-1}z\right)\). Then the eigenfunctions of the mixed-state localization operator \(F\star L_{g_M}\) are the Hagedorn wavepackets \(\{\phi_n[A+iB,C+iD]\}_{n\in\N_0^d}\), and the eigenvalues are \begin{align}\label{GaussFormula}
    \begin{split}
        \lambda_n &=\int_{\R^{2d}} G(z)\left(g_{K}*W(\phi_n)\right)(z)\;dz\\
    &=(2\pi)^d\prod_{j=1}^d\left(\frac{4\pi k_j-1}{4\pi k_j+1}\right)^{n_j}\int_{V^d} G(r)r g_{ K+\frac{I}{4\pi}}(r)\left(\prod_{j=1}^d L_{n_j}^0\left(\frac{4\pi r_j^2}{1-16\pi^2k_j^2}\right)\right)\;dr.        
    \end{split}
    \end{align}
\end{corollary}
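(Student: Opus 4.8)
The plan is to deduce both assertions directly from Corollary \ref{OperatorHagedorn}, with essentially all the genuine work concentrated in the explicit evaluation of one Gaussian convolution. First I would record that the computation immediately preceding the corollary already shows $g_M = g_K\circ T^{-1}$, so that $L_{g_M}$ is precisely the operator whose Weyl symbol is $a_{L_{g_K}}\circ T^{-1}$. Setting $R=L_{g_K}$, the matrix $K=\mathrm{diag}(k_1,\dots,k_d,k_1,\dots,k_d)$ is diagonal with matching $(x_j,\omega_j)$-entries, so $g_K$ is a polyradial Schwartz function and hence $R$ is a polyradial Schwartz operator; moreover the admissibility hypothesis on $M$ passes, via Williamson's factorization $M=TKT^T$ and the symplectic relation $TJT^T=J$, to the condition $4\pi k_j\ge 1$ for every $j$, which is exactly what makes each mode a genuine positive trace-class state. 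With $R$, $S=L_{g_M}$, the symplectic $T=\begin{pmatrix}A & B \\ C & D\end{pmatrix}$, and the polyradial mask $G$ in hand, Corollary \ref{OperatorHagedorn} applies verbatim and yields that the eigenfunctions of $(G\circ T^{-1})\star S=F\star L_{g_M}$ are the Hagedorn wavepackets $\{\phi_n[A+iB,C+iD]\}_{n\in\N_0^d}$, with eigenvalue $\lambda_n=\int_{\R^{2d}}G(z)\,Q_{L_{g_K}}(\phi_n)(z)\,dz$. Since $Q_{L_{g_K}}(\phi_n)=W(\phi_n)*a_{L_{g_K}}=W(\phi_n)*g_K$, this is already the first line of \eqref{GaussFormula}.

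The remaining task is to turn $g_K*W(\phi_n)$ into the explicit Laguerre--Gaussian appearing in the second line. Because $K$ is block-diagonal with equal entries in $x_j$ and $\omega_j$, both $g_K$ and $W(\phi_n)=2^d(-1)^{|n|}\prod_j L_{n_j}^0(4\pi|z_j|^2)e^{-2\pi|z_j|^2}$ factor over the $d$ planes $(x_j,\omega_j)$, reducing the computation to $d$ independent convolutions of one isotropic two-dimensional Gaussian against one Laguerre--Gaussian factor. The cleanest route is through the generating function $\sum_{n} s^{n} W(\phi_n)(z)=\frac{2}{1+s}\exp\!\left(-2\pi\frac{1-s}{1+s}|z|^2\right)$, obtained per mode from the Laguerre identity $\sum_n L_n^0(x)s^n=\frac{1}{1-s}e^{-xs/(1-s)}$ after the substitution $x=4\pi|z_j|^2$, $s\mapsto -s$. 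Convolving this Gaussian with $g_{k_j}$ merely adds variances, producing another Gaussian whose $s$-dependence I would re-expand; matching powers of $s$ against the same Laguerre generating function recovers a Laguerre--Gaussian with envelope $g_{k_j+\frac{1}{4\pi}}$, prefactor $\left(\frac{4\pi k_j-1}{4\pi k_j+1}\right)^{n_j}$, and rescaled argument $\frac{4\pi|z_j|^2}{1-16\pi^2 k_j^2}$. Taking the product over $j$ identifies $g_K*W(\phi_n)$ with the integrand of the second line, carrying the overall Gaussian $g_{K+\frac{I}{4\pi}}$.

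Finally I would substitute this closed form into $\lambda_n=\int_{\R^{2d}}G(z)\,(g_K*W(\phi_n))(z)\,dz$ and pass to absolute space exactly as in Lemma \ref{MyDblOrth} and Proposition \ref{OperatorDaub}: since both $G$ and the convolution are polyradial, the change to polycylindrical coordinates contributes the Jacobian factor $r$ and the angular integration the factor $(2\pi)^d$, leaving the $V^d$-integral displayed in \eqref{GaussFormula}. The main obstacle is the constant-tracking in this Gaussian--Laguerre convolution: one must carry the variance addition and the substitution $s\mapsto\frac{4\pi k_j-1}{4\pi k_j+1}$ through the generating-function bookkeeping carefully to pin down both the prefactor and the argument rescaling simultaneously, and it is precisely the admissibility bound $4\pi k_j\ge 1$ that keeps this parameter in $[0,1)$ and the manipulations legitimate. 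Everything outside this computation is a direct appeal to Corollary \ref{OperatorHagedorn}.
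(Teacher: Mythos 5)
Your proposal is correct, and the reduction part (Williamson diagonalization giving \(g_M=g_K\circ T^{-1}\), then a verbatim application of Corollary \ref{OperatorHagedorn} with \(R=L_{g_K}\), then tensorization over the \(d\) planes \((x_j,\omega_j)\) and a final change to polycylindrical coordinates) coincides exactly with the paper's. Where you genuinely diverge is in the core computation of \(\gamma_E*W(\phi_n)\) per mode. The paper proves this as a standalone lemma by a Fourier-analytic/heat-equation argument: it transfers the Laguerre factor to the differential operator \(L_n^0\bigl(-\tfrac{\Delta}{4\pi}\bigr)\) acting on \(\gamma_{E+\frac12}\), converts \(-\tfrac{\Delta}{4\pi}\) to \(-\tfrac{\partial}{\partial\tilde E}\) via the heat equation, evaluates \(\partial^n\gamma_{\tilde E}/\partial\tilde E^n\) by an induction formula, and collapses the resulting sum with the Laguerre scaling identity \(L_n^0(bt)=\sum_j\binom{n}{j}b^j(1-b)^{n-j}L_j^0(t)\) from \cite{Dunkl}. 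You instead sum the generating function \(\sum_n s^nW(\phi_n)(z)=\frac{2}{1+s}\exp\bigl(-2\pi\frac{1-s}{1+s}|z|^2\bigr)\), convolve a single Gaussian with \(g_{k_j}\) by variance addition, and re-expand in \(s\); I verified that this reproduces the prefactor \(\beta^{n}=\bigl(\frac{4\pi k_j-1}{4\pi k_j+1}\bigr)^{n}\), the argument \(\frac{4\pi|z_j|^2}{1-16\pi^2k_j^2}\), and the envelope \(g_{K+\frac{I}{4\pi}}\) exactly. Your route is arguably slicker: it needs only one classical identity and Gaussian algebra, avoiding the induction step and the external scaling formula, and it makes transparent where the admissibility bound enters (it is what keeps \(\beta\in[0,1)\), so that the re-expansion stays inside the radius of convergence when you match coefficients). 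The paper's route, in exchange, isolates the convolution as a clean lemma about the heat semigroup. Two small points you should make explicit in a full write-up: justify interchanging the sum over \(n\) with the convolution (straightforward for \(|s|<1\) since \(|W(\phi_n)|\le 2^d\) and \(\gamma_E\in L^1\)), and note that at the boundary case \(4\pi k_j=1\), permitted by the positive-semidefiniteness hypothesis, the displayed formula degenerates and must be read as a limit — though the paper's own lemma assumes \(E>\frac12\) and is silent on this as well.
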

The result above is just a restatement of Corollary \ref{OperatorHagedorn} for the case \(S=L_{g_M}\). 
The final eigenvalue formula is specific to the Gaussians, however, and relies heavily on properties of the Gaussians and the Laguerre functions. The main challenge is computing the convolution between a Gaussian and a Wigner function. We do this computation in the case \(d=2\) below. The higher-dimensional case then follows from tensorization. In order to simplify the computation we will use a different convention for the Gaussian.
\begin{lemma}
        Let \(n\in\N_0, E>\frac{1}{2}\), and consider the \(2\)-dimensional heat kernel \(\gamma_E(z)=\frac{1}{E}e^{-\pi\frac{|z|^2}{E}}\) and the Hermite Wigner function \(W(\phi_n)\). Then \begin{align*}
        \left(\gamma_E*W(\phi_n)\right)(z)=\left(\frac{E-\frac{1}{2}}{E+\frac{1}{2}}\right)^n L_n^0\left(\frac{\pi |z|^2}{\frac{1}{4}-E^2}\right)\gamma_{E+\frac{1}{2}}(z).
    \end{align*}
\end{lemma}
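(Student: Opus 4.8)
The plan is to avoid evaluating the convolution integral directly and instead exploit the generating function of the Hermite Wigner functions, which collapses the whole family into a single Gaussian. First I would record the explicit form of the Wigner function. Combining the Laguerre connection with the identity \(W(f,g)(x,\omega)=2^de^{4\pi i\langle x,\omega\rangle}V_{\check g}f(2x,2\omega)\), exactly as in the proof of Proposition \ref{HagedornSTFT} and using \(\check{\phi}_n=(-1)^n\phi_n\), one obtains
\[
  W(\phi_n)(z)=2(-1)^n L_n^0\!\left(4\pi|z|^2\right)e^{-2\pi|z|^2},\qquad z\in\R^2.
\]

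Next I would multiply by \(t^n\) and sum over \(n\), using the Laguerre generating identity \(\sum_{n\ge 0}L_n^0(x)s^n=(1-s)^{-1}e^{-xs/(1-s)}\) with \(s=-t\). After simplifying the exponent this produces a single Gaussian,
\[
  G_t(z):=\sum_{n\ge 0}t^n W(\phi_n)(z)=\frac{2}{1+t}\exp\!\left(-2\pi\frac{1-t}{1+t}|z|^2\right),
\]
valid for \(|t|<1\). Since \(\gamma_E\in L^1(\R^2)\) and convolution is continuous, I may convolve termwise, \(\gamma_E*G_t=\sum_n t^n(\gamma_E*W(\phi_n))\), so it suffices to compute the single Gaussian convolution \(\gamma_E*G_t\) and read off the coefficient of \(t^n\).

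I would then carry out the Gaussian convolution using the elementary rule \(e^{-\pi\alpha|\cdot|^2}*e^{-\pi\beta|\cdot|^2}=(\alpha+\beta)^{-1}e^{-\pi\frac{\alpha\beta}{\alpha+\beta}|\cdot|^2}\) (most transparently via the Fourier transform), with \(\alpha=1/E\) and \(\beta=2\frac{1-t}{1+t}\). Collecting prefactors leaves the denominator \((1+2E)+t(1-2E)\), which I would factor as \((2E+1)(1-\mu t)\) with \(\mu=\frac{E-1/2}{E+1/2}\). Splitting \(\frac{1-t}{1-\mu t}=1-\frac{(1-\mu)t}{1-\mu t}\) then peels off exactly \(\gamma_{E+1/2}(z)\) together with the residual factor \(\frac{1}{1-\mu t}\exp\!\big(\frac{\pi(1-\mu)|z|^2}{E+1/2}\frac{t}{1-\mu t}\big)\). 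Recognizing this residual factor as the Laguerre generating function evaluated at \(s=\mu t\) with argument \(x=-\frac{\pi(1-\mu)|z|^2}{\mu(E+1/2)}\) yields \(\sum_n\mu^n L_n^0(x)\,t^n\), and matching the coefficient of \(t^n\) gives the claimed identity.

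The main obstacle is purely the bookkeeping in this last step: one must verify that the reparametrizations agree, namely that the base reduces to \(\mu^n=\big(\tfrac{E-1/2}{E+1/2}\big)^n\) and, crucially, that the Laguerre argument simplifies to the stated \(\frac{\pi|z|^2}{1/4-E^2}\). The latter rests on the identity \((E+\tfrac12)(2E-1)=2(E^2-\tfrac14)\), which is precisely what makes \(x\) collapse; the hypothesis \(E>\tfrac12\) guarantees \(\mu\in(0,1)\), so all generating-function manipulations converge and the final argument is a genuine (negative) real multiple of \(|z|^2\). A direct convolution of \(\gamma_E\) against a single \(W(\phi_n)\) is possible but would require integrating a Laguerre polynomial against a shifted Gaussian, which is messier than the route above, so I would prefer the generating-function argument.
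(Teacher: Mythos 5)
Your argument is correct, and it takes a genuinely different route from the paper's proof. The paper works on the Fourier side for each fixed $n$: it writes $\mathcal{F}(\gamma_E * W(\phi_n))$ as $L_n^0(\pi|z|^2)e^{-\pi(E+\frac{1}{2})|z|^2}$, converts multiplication by $\pi|z|^2$ into the operator $-\Delta/4\pi$, identifies this with $-\partial_{\tilde{E}}$ via the heat equation satisfied by $\gamma_{\tilde{E}}$ with $\tilde{E}=E+\frac{1}{2}$, computes the iterated time derivatives of the heat kernel by an induction (each producing a Laguerre polynomial), and finally collapses the resulting sum using the Laguerre scaling identity $L_n^0(bt)=\sum_{j}\binom{n}{j}b^j(1-b)^{n-j}L_j^0(t)$ from \cite{Dunkl}. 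You instead sum over $n$ first: the generating identity $\sum_n L_n^0(x)s^n=(1-s)^{-1}e^{-xs/(1-s)}$ turns the whole family $\{W(\phi_n)\}$ into the single Gaussian $G_t$, the convolution $\gamma_E*G_t$ is an elementary Gaussian--Gaussian convolution, and the lemma falls out by reading off the coefficient of $t^n$. Your bookkeeping is right: the prefactor does reduce to $\bigl((E+\tfrac{1}{2})(1-\mu t)\bigr)^{-1}$ with $\mu=\tfrac{E-1/2}{E+1/2}$, the exponent splits as you describe, and since $1-\mu=(E+\tfrac{1}{2})^{-1}$ and $\mu(E+\tfrac{1}{2})^2=E^2-\tfrac{1}{4}$, the Laguerre argument collapses to $\pi|z|^2/(\tfrac{1}{4}-E^2)$ exactly as claimed; a spot check at $n=0$ recovers the semigroup property $\gamma_E*\gamma_{1/2}=\gamma_{E+1/2}$. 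What each approach buys: yours needs only one classical Laguerre identity (the generating function) instead of two (the derivative formula plus the scaling formula), and replaces the operator calculus $L_n^0(-\Delta/4\pi)$ by coefficient extraction, so it is arguably more elementary; the paper's route is termwise in $n$ and so needs no convergence discussion. Your interchange of sum and convolution deserves one explicit line: since $\|W(\phi_n)\|_\infty\le 2\|\phi_n\|_2^2=2$, the series $\sum_n t^nW(\phi_n)$ converges absolutely in $L^\infty(\R^2)$ for $|t|<1$, and convolution with $\gamma_E\in L^1(\R^2)$ is bounded from $L^\infty$ to $L^\infty$, so termwise convolution is legitimate; matching coefficients is then uniqueness of power series in $t$, with $\mu\in(0,1)$ (this is where $E>\tfrac{1}{2}$ enters, as you note) guaranteeing that both sides are analytic on $|t|<1$.
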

\begin{proof}
    By Fourier transform, we have \begin{align*}
    \gamma_E*W(\phi_n) &=\mathcal{F}^{-1}\mathcal{F}\left(\gamma_E*W(\phi_n)\right)=\mathcal{F}^{-1}\left(e^{-\pi E |z|^2}\cdot A(\phi_n)\right)\\
    &=\mathcal{F}^{-1}\left(e^{-\pi E |z|^2}\cdot L_n^0(\pi |z|^2)e^{-\frac{\pi}{2}|z|^2}\right)=\mathcal{F}^{-1}\left( L_n^0(\pi |z|^2)e^{-\pi\left(E+\frac{1}{2}\right)|z|^2}\right).\end{align*}
    Let \(\Tilde{E}=E+\frac{1}{2}\). Recalling that the Fourier transform turns \(\pi|z|^2 f\) into \(-\frac{\Delta\hat{f}}{4\pi} \), the expression above now equals \begin{align*}
    \mathcal{F}^{-1}\left( L_n^0(\pi |z|^2)e^{-\pi\Tilde{E}|z|^2}\right)=\left(L_n^0\left(-\frac{\Delta}{4\pi}\right)\right)\gamma_{\Tilde{E}}(z). 
    \end{align*}
    Now note that \(\gamma_{\tilde{E}}\) solves the two-dimensional heat equation with diffusion constant \(\frac{1}{4\pi}\): \begin{align*}
        \frac{\partial \gamma_{\tilde{E}}}{\partial \tilde{E}}=\frac{1}{4\pi}\Delta \gamma_{\tilde{E}}.
    \end{align*}
    Furthermore, a proof by induction shows that \begin{align*}
        \frac{\partial^n \gamma_{\tilde{E}}}{\partial \tilde{E}^n}=(-1)^n\frac{n!}{\tilde{E}^n}L_n^0\left(\frac{\pi |z|^2}{\tilde{E}}\right)\gamma_{\tilde{E}}.
    \end{align*} Returning to the expression above with these facts in mind, we get 
    \begin{align*} \left(L_n^0\left(-\frac{\Delta}{4\pi}\right)\right)\gamma_{\Tilde{E}}(z) &=\left(L_n^0\left(-\frac{\partial}{\partial \Tilde{E}}\right)\right)\gamma_{\Tilde{E}}(z) 
   =\sum_{j=0}^n(-1)^j\binom{n}{n-j}\frac{1}{j!}\left(-\frac{\partial}{\partial \Tilde{E}}\right)^j\gamma_{\Tilde{E}}(z)\\&=\sum_{j=0}^n\binom{n}{n-j}\frac{1}{j!}\left(\frac{\partial}{\partial \Tilde{E}}\right)^j\gamma_{\Tilde{E}}(z)
    \\ &=\sum_{j=0}^n\binom{n}{n-j}\frac{1}{j!}(-1)^j\frac{j!}{\Tilde{E}^j}L_j^0\left(\frac{\pi|z|^2}{\Tilde{E}}\right)\gamma_{\Tilde{E}}(z)\\
    &=\gamma_{\Tilde{E}}(z)\sum_{j=0}^n\binom{n}{n-j}\left(-\frac{1}{\Tilde{E}}\right)^jL_j^0\left(\frac{\pi|z|^2}{\Tilde{E}}\right).
\end{align*}
We now wish to use the scaling formula for the Laguerre polynomials~\cite{Dunkl}: For any \(t,b\in \R\), we have \begin{align*}
    L_n^0(bt)=\sum_{j=0}^n\binom{n}{j}b^j(1-b)^{n-j}L_j^0(t).
\end{align*}
We want \(b^j(1-b)^{-j}=\left(-\frac{1}{\Tilde{E}}\right)^j\), and a simple calculation shows that \(b=\frac{1}{1-\Tilde{E}}\) does the trick, and that we have \(1-b=\frac{\Tilde{E}}{\Tilde{E}-1}\). However, we must also divide by \(\left(\frac{\Tilde{E}}{\Tilde{E}-1}\right)^n\) to account for the missing term inside the sum. Thus, our expression above equals\begin{align*}
    &\left(\frac{\Tilde{E}-1}{\Tilde{E}}\right)^n \gamma_{\Tilde{E}}(z)\sum_{j=0}^n\binom{n}{n-j}\left(\frac{1}{1-\Tilde{E}}\right)^j\left(\frac{\Tilde{E}}{\Tilde{E}-1}\right)^{n-j}L_j^0\left(\frac{\pi|z|^2}{\Tilde{E}}\right)\\
    &=\left(\frac{\Tilde{E}-1}{\Tilde{E}}\right)^n \left(\sum_{j=0}^n\binom{n}{j}\left(\frac{1}{1-\Tilde{E}}\right)^j\left(\frac{\Tilde{E}}{\Tilde{E}-1}\right)^{n-j}L_j^0\left(\frac{\pi|z|^2}{\Tilde{E}}\right)\right)\gamma_{\Tilde{E}}(z)\\
    &=    \left(\frac{\Tilde{E}-1}{\Tilde{E}}\right)^n L_n^0\left(\frac{\pi|z|^2}{\Tilde{E}-\Tilde{E}^2}\right)\gamma_{\Tilde{E}}(z)\\
    &=    \left(\frac{E-\frac{1}{2}}{E+\frac{1}{2}}\right)^n L_n^0\left(\frac{\pi |z|^2}{\frac{1}{4}-E^2}\right)\gamma_{E+\frac{1}{2}}(z),
\end{align*}
where we have used the scaling formula in the second equality.
\end{proof}
The eigenvalue formula, Equation \eqref{GaussFormula}, now follows directly once we note that \(g_M=\gamma_{M/2\pi}\). Note that the argument of the Laguerre function \(L_n^0\) is negative. This guarantees the positivity of the eigenvalues.
\section{Toeplitz operators on quantum Gabor spaces}\label{Sec:Toeplitz}

In this final section we will use the theory developed in this paper to study Toeplitz operators on the quantum Gabor space \(\mathfrak{V}_S(\mathcal{S}^2)\), as defined in Equation~\eqref{ToeplitzDef}. From the results in \cite{OpSTFT}, we know that these operators are unitarily equivalent to composition with a mixed-state localization operator, so we will therefore use quantum double orthogonality to study the spectral properties of these Toeplitz operators. In particular, the Toeplitz operator with mask \(F\) acting on \(\mathfrak{V}_S(\mathcal{S}^2)\) will be unitarily equivalent to composition with \(F\star (SS^*)\). Note that the operator \(SS^*\) is positive, and since \(S\) is Hilbert-Schmidt, \(SS^*\) will also be trace class. If we now also assume that \(F\in EFG(\R^{2d})\), the mixed-state localization operator satisfies the assumptions of Proposition \ref{OperatorOrth}. We translate this by unitarity to a result on Toeplitz operators.
\begin{prop}
    Let \(S\in\mathcal{S}^2\), \(F\in FG(\R^{2d})\), and \(\{\Psi_n\}_{n\in\N_0^d}\) be a collection of functions in \(\mathfrak{V}_S(\mathcal{S}^2)\). Then \(\{\Psi_n\}_{n\in\N_0^d}\) are eigenfunctions of the Toeplitz operator \(T_F\) on \(\mathfrak{V}_S(\mathcal{S}^2)\) if and only if they are complete and satisfy the identities \begin{align*}
        \int_{\R^{2d}}\mathrm{tr}\left( \Psi_n(z)\Psi_m^*(z)\right) \;dz=\delta_{n,m},
    \end{align*} and \begin{align*}
         \int_{\R^{2d}}F(z) \;\mathrm{tr}\left( \Psi_n(z)\Psi_m^*(z)\right)\;dz=c_n\delta_{n,m}.
    \end{align*}
    In this case, the \(n\)-th eigenvalue is \(c_n\).
\end{prop}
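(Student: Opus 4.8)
The plan is to transport the problem to the operator side via the unitary $\mathfrak{V}_S$ and then run the same double-orthogonality bookkeeping as in Proposition \ref{OperatorOrth}. Recall from \cite{OpSTFT} that (after normalizing $\|S\|_{\mathcal{S}^2}=1$) the operator STFT $\mathfrak{V}_S$ is an isometry of $\mathcal{S}^2$ onto the quantum Gabor space $H:=\mathfrak{V}_S(\mathcal{S}^2)$, and that $\Theta(T_F)=\mathfrak{V}_S^*T_F\mathfrak{V}_S$ is precisely left composition $\mathcal{L}\colon R\mapsto (F\star SS^*)R$ on $\mathcal{S}^2$. Hence $F_n=\mathfrak{V}_S T_n$ is an eigenfunction of $T_F$ with eigenvalue $c_n$ if and only if $T_n$ is an eigenvector of $\mathcal{L}$ with the same eigenvalue, and completeness/orthonormality of $\{F_n\}$ in $H$ is equivalent to the same for $\{T_n\}$ in $\mathcal{S}^2$.

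First I would identify the two integral quantities inside $H$, so that the characterization reduces to the generic statement that a complete orthonormal family diagonalizes a bounded operator if and only if the associated bilinear form is diagonal. Pointwise $\mathrm{tr}(F_n(z)F_m^*(z))=\langle F_n(z),F_m(z)\rangle_{\mathcal{HS}}$, so the first identity says exactly that $\{F_n\}$ is orthonormal in $L^2(\R^{2d};\mathcal{S}^2)$, hence in $H$. For the second, since $P_S$ is the orthogonal projection onto $H$ and $F_m\in H$, one has $\langle T_F F_n,F_m\rangle=\langle P_S(F\cdot F_n),F_m\rangle=\langle F\cdot F_n,F_m\rangle=\int_{\R^{2d}}F(z)\,\mathrm{tr}(F_n(z)F_m^*(z))\,dz$, so the second identity is just $\langle T_F F_n,F_m\rangle=c_n\delta_{n,m}$. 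With these identifications the argument is formal and mirrors the proof of Proposition \ref{GeneralDblOrth}: for the reverse implication, expand $T_F F_n=\sum_m\langle T_F F_n,F_m\rangle F_m$ in the orthonormal basis $\{F_m\}$ of $H$ and read off $T_F F_n=c_n F_n$ from the second identity; for the forward implication, orthonormality of the eigenbasis is the first identity and $\langle T_F F_n,F_m\rangle=c_n\langle F_n,F_m\rangle=c_n\delta_{n,m}$ is the second.

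The substantive point, which makes the forward direction non-vacuous, is the existence of a complete orthonormal eigenbasis, and this is where the hypotheses $F\in FG(\R^{2d})$ and $S\in\mathcal{S}^2$ enter. Since $SS^*$ is positive and trace class, the Fernández--Galbis criterion \cite{FG} shows $F\star SS^*$ is compact and self-adjoint, hence admits an orthonormal eigenbasis $\{\psi_j\}$ of $L^2(\R^d)$ with eigenvalues $\lambda_j\ge 0$. Under the identification $\mathcal{S}^2\cong L^2(\R^d)\otimes\overline{L^2(\R^d)}$, left composition is $\mathcal{L}=(F\star SS^*)\otimes I$, so $\{\psi_j\otimes\psi_k\}_{j,k}$ is a complete orthonormal eigenbasis of $\mathcal{L}$ with $\mathcal{L}(\psi_j\otimes\psi_k)=\lambda_j(\psi_j\otimes\psi_k)$; applying the isometry $\mathfrak{V}_S$ produces the required eigenbasis of $T_F$ in $H$, with the normalization $\|S\|_{\mathcal{S}^2}=1$ ensuring that the first identity is genuine orthonormality ($=\delta_{n,m}$) rather than orthonormality up to a constant.

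The main obstacle---and the reason one cannot simply invoke the spectral theorem for compact operators as in Proposition \ref{OperatorOrth}---is that $\mathcal{L}$, equivalently $T_F$, is \emph{not} compact: every eigenvalue $\lambda_j$ of $F\star SS^*$ is inherited by $\mathcal{L}$ with infinite multiplicity through the factor $\otimes\,I$. Thus the whole argument must avoid compactness of $T_F$ and rest only on the existence of the complete orthonormal eigenbasis supplied by the tensor factorization above, together with the purely Hilbert-space expansion in that basis. The remaining work is routine bookkeeping: checking the operator-Moyal normalization constant and confirming that $P_S$ being a self-adjoint projection makes the reduction of $\langle T_F F_n,F_m\rangle$ to the weighted integral exact.
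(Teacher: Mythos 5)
Your proposal is correct and follows essentially the same route as the paper: unitary equivalence of $T_F$ with left composition by $F\star(SS^*)$, compactness of $F\star(SS^*)$ via the Fernández--Galbis hypothesis to obtain the eigenbasis $\{\psi_j\otimes\psi_k\}$ (correctly handling the fact that $T_F$ itself is not compact), and then the standard double-orthogonality expansion in the resulting ONB of $\mathfrak{V}_S(\mathcal{S}^2)$. The only difference is cosmetic: where the paper verifies $\langle T_FF_n,F_m\rangle=\int F(z)\,\mathrm{tr}\left(F_n(z)F_m^*(z)\right)dz$ by an explicit Fubini computation with the reproducing kernel $K_S(z,z')$, you invoke the abstract fact that $P_S=\mathfrak{V}_S\mathfrak{V}_S^*$ is a self-adjoint projection fixing $F_m\in\mathfrak{V}_S(\mathcal{S}^2)$, which is the same content in streamlined form.
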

\begin{proof}
    Note first that since \(S\) is Hilbert-Schmidt, \(SS^*\) will be trace class, and since \(F\in FG(\R^{2d})\), \(F\star (SS^*)\) will be compact, and thus have an ONB of eigenvectors in \(L^2(\R^d)\), say \(\{\psi_n\}_{n\in\N_0^d}\). Thus the composition operator \(T\mapsto (F\star (SS^*))\circ T\) has an ONB of eigenoperators, namely \(\{\psi_n\otimes \psi_m\}_{m,n\in \N_0^d}\), as can be verified with a simple computation. Since \(T_F\) is unitarily equivalent to \(T\mapsto (F\star (SS^*))\circ T\) it will also have an ONB of eigenfunctions. So assume without loss of generality that the eigenfunctions of \(T_F\), say \(\{\Psi_n\}_{n\in\N_0^d}\), form an ONB of \(\mathfrak{V}_S(\mathcal{S}^2)\). We now proceed as before:\begin{align*}
        \lambda_n\delta_{n,m} &=\langle \lambda_n\Psi_n,\Psi_m\rangle=\langle T_F\Psi_n,\Psi_m\rangle=\int_{\R^{2d}} \mathrm{tr}\left(T_F\Psi_n(z)\Psi_m^*(z)\right)\;dz \\
        &=\int_{\R^{2d}} \mathrm{tr}\left(\int_{\R^{2d}} F(z')K(z,z')\Psi_n(z')\;dz'\Psi_m^*(z)\right)\;dz\\&=\int_{\R^{2d}} \int_{\R^{2d}} F(z')\;\mathrm{tr}\left(K(z,z')\Psi_n(z')\Psi_m^*(z)\right)\;dz'\;dz,
    \end{align*} 
    where we have used Fubini in the last line. Recalling the symmetry of the trace and using Fubini again, we moreover get \begin{align*}
        &=\int_{\R^{2d}} \int_{\R^{2d}} F(z')\;\mathrm{tr}\left(\Psi_m^*(z)K(z,z')\Psi_n(z')\right)\;dz'\;dz \\
        &=\int_{\R^{2d}} F(z')\;\mathrm{tr}\left(\int_{\R^{2d}} \Psi_m^*(z)K(z,z')\;dz\;\Psi_n(z')\right)\;dz'.
    \end{align*}
    Now we use the reproducing property of \(\mathfrak{V}_S(\mathcal{S}^2)\):\begin{align*}
        =\int_{\R^{2d}}  F(z')\;\mathrm{tr}\left(\Psi_m^*(z')\;\Psi_n(z')\right)\;dz'=\int_{\R^{2d}}  F(z')\;\mathrm{tr}\left(\Psi_n(z')\Psi_m^*(z')\right)\;dz'. 
    \end{align*}
    Thus we have verified that the functions \(\{\Psi_n\}_{n\in\N_0^d}\) satisfy the double orthogonality condition. \newline

    Now assume that we have a doubly orthogonal collection of functions \(\{\Psi_n\}_{n\in\N_0^d}\) in \(\mathfrak{V}_{S}(\mathcal{S}^2)\). Since they are an ONB, there are coefficients \(a_{m}\) such that \begin{align*}
        T_F(\Psi_n)=\sum_{m=0}^\infty a_{m} \Psi_m.
    \end{align*}
    The coefficients are \begin{align*}
        a_{m}=\int_{\R^{2d}}\;\mathrm{tr}\left(T_F(\Psi_n)(z)\Psi_m^*(z)\right)\;dz,
    \end{align*}
    which, as we have seen above, equals \begin{align*}
          \int_{\R^{2d}}  F(z)\;\mathrm{tr}\left(\Psi_n(z)\Psi_m^*(z)\right)\;dz.
    \end{align*}
    By double orthogonality, \(a_m=\lambda_n\delta_{n,m}\), and plugging this into the expansion, we see that \(\Psi_n\) is an eigenfunction of \(T_F\) for all \(n\in\N_0\).
\end{proof}
\begin{remark}
    Just like for mixed-state localization operators, the double orthogonality characterization extends to the case where \(F\in EFG(\R^{2d})\).
\end{remark}
A polarization argument now lets us carry Daubechies' theorem for polyradial operators over to a result on Toeplitz operators.
\begin{prop}
    Let \(F\in EFG(\R^{2d})\) be a polyradial mask, and let \(S\in\mathcal{S}^2\) be so that \(SS^*\) is a polyradial operator in \(\mathcal{S}^1\). Then the functions \(\{\mathfrak{V}_{S}(\phi_n\otimes \phi_m)\}_{n,m\in\N_0^d}\) are eigenfunctions of the Toeplitz operator with mask \(F\) acting on \(\mathfrak{V}_S(\mathcal{S}^2)\). The eigenvalue of \(\mathfrak{V}_{S}(\phi_n\otimes \phi_m)\) is the same as \(\phi_n\)'s eigenvalue under the mixed-state localization operator \(F\star(SS^*)\).
\end{prop}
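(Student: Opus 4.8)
The plan is to verify the double orthogonality characterization from the previous proposition for the candidate family \(F_{n,m}:=\mathfrak{V}_S(\phi_n\otimes\phi_m)\), and to read off the eigenvalues from the mixed-state theory already developed. Since \(S\in\mathcal{S}^2\), the operator \(SS^*\) lies in \(\mathcal{S}^1\); it is polyradial by hypothesis, and \(F\in EFG(\R^{2d})\) is polyradial, so Proposition \ref{OperatorDaub} applies to the mixed-state localization operator \(F\star(SS^*)\). It tells us that the product Hermite functions \(\{\phi_n\}_{n\in\N_0^d}\) are the eigenfunctions of \(F\star(SS^*)\), with \(\phi_n\) carrying eigenvalue \(\lambda_n\). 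Because \(\{\phi_n\otimes\phi_m\}_{n,m}\) is an orthonormal basis of \(\mathcal{S}^2\) and \(\mathfrak{V}_S\) is a scalar multiple of an isometry onto \(\mathfrak{V}_S(\mathcal{S}^2)\), the family \(\{F_{n,m}\}\) is complete, and after rescaling by \(\|S\|_{\mathcal{HS}}^{-1}\) it is orthonormal; this settles the first (unweighted) orthogonality condition.

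The key step — the polarization — is to compute the weighted Hilbert-Schmidt pairing appearing in the second orthogonality condition. First I would use that \(\pi(z)^*\) and \(S^*\) act only on the left tensor factor, so that
\[
F_{n,m}(z)=S^*\pi(z)^*(\phi_n\otimes\phi_m)=\left(S^*\pi(z)^*\phi_n\right)\otimes\phi_m .
\]
Then, applying the identity \(\langle a\otimes b,c\otimes d\rangle_{\mathcal{HS}}=\langle a,c\rangle\overline{\langle b,d\rangle}\) for rank-one operators, the cross term factorizes and the second tensor slot collapses to a Kronecker delta:
\[
\mathrm{tr}\!\left(F_{n,m}(z)F_{n',m'}^*(z)\right)=\langle S^*\pi(z)^*\phi_n,\,S^*\pi(z)^*\phi_{n'}\rangle\,\delta_{m,m'}=Q_{SS^*}(\phi_n,\phi_{n'})(z)\,\delta_{m,m'},
\]
where the final equality is just \(\langle S^*\pi(z)^*\phi_n,S^*\pi(z)^*\phi_{n'}\rangle=\langle\alpha_z(SS^*)\phi_n,\phi_{n'}\rangle\), the defining formula for the Cohen's class \(Q_{SS^*}\). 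This is precisely the bridge that converts a statement about the operator STFT into one about mixed-state localization.

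With that identity in hand, integrating against \(F\) gives
\begin{align*}
\int_{\R^{2d}} F(z)\,\mathrm{tr}\!\left(F_{n,m}(z)F_{n',m'}^*(z)\right)\,dz
&=\delta_{m,m'}\int_{\R^{2d}} F(z)\,Q_{SS^*}(\phi_n,\phi_{n'})(z)\,dz\\
&=\lambda_n\,\delta_{n,n'}\,\delta_{m,m'},
\end{align*}
where the last equality is exactly the quantum double orthogonality of Proposition \ref{OperatorOrth} applied to \(F\star(SS^*)\) (using the extension of that proposition, and of the previous one, to masks in \(EFG\)). This verifies the weighted orthogonality condition with constant \(\lambda_n\), so the previous proposition identifies \(\{F_{n,m}\}\) as the eigenfunctions of \(T_F\), with \(F_{n,m}\) attached to eigenvalue \(\lambda_n\) — the eigenvalue of \(\phi_n\) under \(F\star(SS^*)\), as claimed.

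I expect the only real obstacle to be bookkeeping rather than conceptual difficulty: keeping careful track of how \(\pi(z)^*\) and \(S^*\) distribute across the tensor factors of \(\phi_n\otimes\phi_m\), and confirming that the right-hand slot produces a clean, \(z\)-independent factor \(\delta_{m,m'}\) so that the \(z\)-integration only ever interacts with the Cohen's class \(Q_{SS^*}(\phi_n,\phi_{n'})\). Once that factorization is pinned down, the result is a direct consequence of Propositions \ref{OperatorOrth} and \ref{OperatorDaub} together with the unitary equivalence recorded in the preliminaries.
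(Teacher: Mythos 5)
Your proof is correct, but it takes exactly the route the paper declines: the paper's own proof remarks that ``one could use double orthogonality'' and instead passes through the unitary equivalence to \(\mathcal{S}^2\) and computes in one line,
\[(F\star(SS^*))\circ(\phi_n\otimes\phi_m)=\bigl((F\star(SS^*))\phi_n\bigr)\otimes\phi_m=\lambda_n(\phi_n\otimes\phi_m),\]
using Proposition \ref{OperatorDaub} and the fact that composition with a rank-one operator acts only on the left tensor factor. You stay on the Toeplitz side and verify the two conditions of the preceding double-orthogonality proposition directly, via the factorization \(\mathfrak{V}_S(\phi_n\otimes\phi_m)(z)=(S^*\pi(z)^*\phi_n)\otimes\phi_m\) and the rank-one Hilbert--Schmidt identity, which yields the bridge \(\mathrm{tr}\bigl(F_{n,m}(z)F_{n',m'}^*(z)\bigr)=Q_{SS^*}(\phi_n,\phi_{n'})(z)\,\delta_{m,m'}\); quantum double orthogonality (Propositions \ref{OperatorOrth} and \ref{OperatorDaub}, in their \(EFG\)-extended form) then gives the weighted constant \(\lambda_n\delta_{n,n'}\delta_{m,m'}\). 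Both arguments are sound and both lean on Proposition \ref{OperatorDaub} for the eigenvalue identification; the paper's is shorter and makes the multiplicity structure transparent (any \(\mathfrak{V}_S(\phi_n\otimes\psi)\) is an eigenfunction, as its remark notes), while yours is more self-contained at the level of the quantum Gabor space: your trace identity is in effect a polarized re-derivation of the equivalence the paper imports wholesale from \cite{OpSTFT}, and it exhibits concretely why the second tensor index contributes only multiplicity. One bookkeeping point deserves a sentence in your write-up: rescaling the family by \(\|S\|_{\mathcal{HS}}^{-1}\) to meet the first (orthonormality) condition rescales the weighted integral by \(\|S\|_{\mathcal{HS}}^{-2}\), so the eigenvalue read off from the preceding proposition is \(\lambda_n/\|S\|_{\mathcal{HS}}^{2}\); this matches the claimed \(\lambda_n\) precisely under the normalization \(\|S\|_{\mathcal{HS}}=1\), which is implicit throughout the framework (it is what makes \(P_S=\mathfrak{V}_S\mathfrak{V}_S^*\) a projection and \(\Theta\) a unitary equivalence), so the discrepancy is harmless but should be acknowledged rather than elided.
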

\begin{proof}
    One could use double orthogonality to show this, but it is far easier to use the unitarily equivalent operator on \(\mathcal{S}^2\) and compute directly. We have \begin{align*}
        F\star (SS^*)\circ (\phi_n\otimes\phi_m)=(F\star (SS^*)\phi_n)\otimes\phi_m=(\lambda_n\phi_n)\otimes \phi_m=\lambda_n(\phi_n\otimes\phi_m),
    \end{align*}
    where the second inequality comes from Proposition \ref{OperatorDaub}.
\end{proof}
\begin{remark}
    As one might expect from the multiplicities of the eigenvalues, any function of the form \(\mathfrak{V}_S(\phi_n\otimes \psi)\) is an eigenfunction. However, we have stated the result with \(\psi=\phi_m\), since these functions also give an orthogonal basis of \(\mathfrak{V}_S(\mathcal{S}^2)\). Another way to state the result is that the \(n\)-th eigenspace of \(T_F\) is \(\mathrm{span}_{\C}\left(S^*\pi(z)^*\phi_n\right)\otimes L^2(\R^d)\).
\end{remark}
\section*{Acknowledgements}
The author thanks Franz Luef for many valuable discussions on the presented subject matter, and for useful comments on several drafts of this manuscript. The author also acknowledges the partial funding received from the project ``Pure Mathematics in Norway – Ren matematikk i Norge", funded by the Trond Mohn Foundation.

\printbibliography
\end{document}